\documentclass{article}
\usepackage{graphicx} 
\usepackage[margin=2.5cm]{geometry} 
\usepackage{amsthm,amsmath,amssymb}
\usepackage{tikz}
\usepackage[utf8]{inputenc}
\usepackage[T5]{fontenc} 
\usepackage[vietnamese, english]{babel}
\usepackage[linesnumbered,ruled,vlined]{algorithm2e}
\usepackage{float}
\usepackage{enumitem}
\usepackage{subcaption}
\usepackage{array}
\usepackage{url}

\title{On the Number of Zero Forcing Minimal Forts on Trees}

\author{Nguyễn {Hoàng} {Đạt}\thanks{Vietnam National University - Hanoi University of Science} ~and Kenter, Franklin H. J.\footnotemark[1] \thanks{U.S. Naval Academy}}

\date{May 8, 2026}

\newtheorem{lemma}{Lemma}
\newtheorem{theorem}{Theorem}

\newtheorem{definition}{Definition}
\newtheorem{proposition}{Proposition}
\newtheorem{cor}{Corollary}
\newtheorem{question}{Question}

\begin{document}

\maketitle

\newcommand\blfootnote[1]{%
  \begingroup
  \renewcommand\thefootnote{}\footnote{#1}%
  \addtocounter{footnote}{-1}%
  \endgroup
}

\blfootnote{~\\ The views
expressed in this article are those of the authors and do not reflect the official policy or position of the U.S. Naval Academy, the Department of the Navy, the
Department of War, or the U.S. Government.}

\begin{abstract}

We solve a conjecture by Becker et al. (2025) on the topic of zero forcing regarding the number of minimal forts of a tree. They conjectured and we prove $\mathcal{F}_{T_n} \le \binom{n}{2} \mathcal{F}_{P_n}$ where $\mathcal{F}_{T_n}$ is the maximum number of minimal forts on a tree on $n$ vertices and $\mathcal{F}_{P_n}$ is the number of minimal forts of the path graph on $n$ vertices.  Our solution relies on both a computational and theoretical approach. Computationally, we introduce and implement an efficient algorithm to compute the exact number of minimal forts for small trees; this is used to establish the large base case required for our strong induction. Theoretically, we provide an adaptation of the recursion relation that defines $\mathcal{F}_{P_n}$ that applies for all forests; this is used in the induction step to establish the result.

In addition, several examples and new open problems are presented.
\end{abstract}

\section{Introduction}

Zero forcing on graphs was originally conceived in 2008 to bound the maximum nullity of the family of symmetric matrices described by a graph \cite{AIM}. Zero forcing can be thought of as a game or propagation process whereby an initial set of vertices are colored and all others uncolored. Thereafter, a color-change rule is iteratively applied: if a colored vertex has only one uncolored neighbor, it {\it forces} that neighbor to be colored. The color-change rule emulates backsolving a system of equations with arbitrary coefficients. Hence, zero forcing and its variations have roles in applications where the pattern or schematic of interactions are known {\it a priori} but the decision of what or where to take measurements (i.e., the initially colored vertices) must be determined before the exact quantitative details of those interactions are known; among these are power monitoring \cite{brueni2005pmu,liao2015hybrid}, controllability of networks \cite{monshizadeh2014zero}, quantum control \cite{burgarth2013zero}, and leader-follower dynamics \cite{abbas2022leader}.

 While the determination of the zero forcing number, $Z(G)$, is known to be an NP-hard problem \cite{aazami2008hardness,trefois2015zero}, recent attention has shifted toward combinatorial optimization methods in linear programming (see for instance \cite{brimkov2021improved, cameron2023forts, hicks2024many}). In turn, understanding structural obstructions enables practical approaches to compute the zero forcing number and its variations. These obstructions, formally defined as forts in \cite{fast2018effects}, form a sort of ``combinatorial dual'' of zero forcing sets. A nonempty set of vertices is a {\it fort} if every vertex outside of the fort is adjacent to either zero or two or more vertices in the fort. In other words, a fort is the complement of a set of colored vertices whereby no more forces are possible. This enables zero forcing to be considered as a covering problem: determining the zero forcing number is equivalent to finding a minimum size set that intersects every fort in the graph (See \cite{brimkov2021improved, hicks2024many}).
 
The enumerative aspects of forts-- specifically minimal forts-- have  consequential impacts toward this computational approach:  Fewer minimal forts implies fewer constraints which leads to more efficient optimization algorithms for zero forcing \cite{hicks2024many}. The most comprehensive study on enumerating minimal forts in graphs was done by Becker, Cameron, Hanely, Ong, and Previte \cite{becker2025number} who initiated a systematic investigation for various families of graphs and trees and the asymptotic growth rate of the number of minimal forts. In their work, they made two conjectures on the number of minimal forts for trees as well as graphs in general.

In this article, we prove their first conjecture regarding the number of minimal forts on trees. Let $\mathcal{F}_{T_n}$ (respectively $\mathcal{F}_{R_n}$) be the maximum number of minimal forts over all trees (respectively forests) on $n$ vertices. 
Let $\mathcal{F}_{P_n}$ be the number of minimal forts on the path on $n$ vertices. Our main result is the following:

\begin{theorem}[From Conjecture 30 in \cite{becker2025number}] \label{thm:main} For $n\ge 3,$
    \[ \mathcal{F}_{T_n} \le \mathcal{F}_{R_n} \le \binom{n}{2} \mathcal{F}_{P_n} \]
\end{theorem}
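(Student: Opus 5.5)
The first inequality is immediate, because every tree on $n$ vertices is a forest on $n$ vertices. The work is in the second inequality. I will prove it by strong induction on $n$, using a recursion for forests that mimics the one for paths. Recall that $\mathcal{F}_{P_n}$ satisfies a Padovan-type recursion, namely $\mathcal{F}_{P_n}=\mathcal{F}_{P_{n-2}}+\mathcal{F}_{P_{n-3}}$ with small initial values. Consequently it grows like $\rho^n$, where $\rho\approx 1.3247$ is the plastic number.

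\textbf{Step 1: minimal forts of a disconnected forest.} Each component of a fort $F$ that meets $F$ is itself a fort, because vertices in different components are not adjacent. It follows that a minimal fort of a forest lies entirely inside one component. Hence
\[
\mathcal{F}_{R_n}\le \max_{n_1+\dots+n_k=n}\sum_i \mathcal{F}_{T_{n_i}}.
\]
Given the induction hypothesis for smaller $n$ and monotonicity of $\binom{m}{2}\mathcal{F}_{P_m}$ in $m$, this sum is at most $\binom{n}{2}\mathcal{F}_{P_n}$ once we check the superadditivity $\binom{a}{2}\mathcal{F}_{P_a}+\binom{b}{2}\mathcal{F}_{P_b}\le\binom{a+b}{2}\mathcal{F}_{P_{a+b}}$. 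That check is routine and follows from exponential growth. So it suffices to treat a tree $T$ on $n$ vertices.

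\textbf{Step 2: a recursion for trees.} Take a longest path in $T$ and let $v$ be the neighbor of one of its ends, so that every child of $v$ is a leaf. Every fort contains all or none of the leaves at $v$ except possibly one: if a fort omits $v$, then $v$ must see at least two fort-leaves or none. I will split the minimal forts according to how they meet the pendant star at $v$ and its parent $u$. In each case, deleting $v$ and its leaves (or also $u$) maps the minimal fort to a minimal fort, or a union of boundedly many minimal forts, of a smaller forest. This is the path analogue: there, deleting $2$ or $3$ end vertices gives $\mathcal{F}_{P_{n-2}}+\mathcal{F}_{P_{n-3}}$. The target inequality has the shape
\[
\mathcal{F}(T)\le c_1\,\mathcal{F}_{R_{n-2}}+c_2\,\mathcal{F}_{R_{n-3}}+(\text{lower-order terms}),
\]
where the multiplicities $c_i$ may depend on $\deg v$. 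If $v$ has many leaves, the deleted piece is larger, so the index drop compensates.

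\textbf{Step 3: closing the induction.} Apply the hypothesis $\mathcal{F}_{R_m}\le\binom m2\mathcal{F}_{P_m}$ to the smaller forests. We then need
\[
\binom{n-2}{2}\mathcal{F}_{P_{n-2}}+\binom{n-3}{2}\mathcal{F}_{P_{n-3}}+\dots\le\binom n2\mathcal{F}_{P_n}.
\]
The binomial factors decrease, which gives slack of order $1-O(1/n)$ per term. That slack must absorb the extra multiplicities and the lower-order terms. This happens only for $n$ beyond some threshold $N$, and the slack is polynomially thin. Below $N$, I will establish the base cases by computer: enumerate all trees up to $N$ vertices and count their minimal forts with an efficient dynamic program over rooted subtrees. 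The program records, for each subtree root, its state (in or out of the fort) and the number of fort neighbors among its children. Minimality is enforced by counting forts through inclusion–exclusion, or by directly checking that no proper subfort exists via a leaf-to-root DP.

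\textbf{Main obstacle.} The hard part is Step 2: getting a forest recursion tight enough that its coefficients sum to something only slightly above the Padovan coefficients. A minimal fort of $T$ need not restrict to a minimal fort of $T-\{\text{star}\}$, since the restriction may split into several minimal forts or fail to be a fort at $u$. If the naive injection fails, I will first try charging each minimal fort of $T$ to a pair consisting of a minimal fort of a smaller forest and a choice among at most $\binom{\cdot}{1}$ boundary vertices. The extra factor of order $n$ that this creates is precisely what the $\binom n2$ in the statement is designed to absorb.
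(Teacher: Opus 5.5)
Your architecture (a computer-verified base case, then strong induction driven by a recursion at a deepest leaf) matches the paper's. However, the decisive step is never carried out, and the mechanism you describe for closing the induction would not work.

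\textbf{What goes wrong.}

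Step 2 never states or proves a recursion. The target shape you display, with coefficients $c_i$ depending on $\deg v$ but index drops fixed at $2$ and $3$, cannot be closed: already $3\psi^{-3}>1$.

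Step 3 asserts that the slack $\binom{n-2}{2}/\binom{n}{2}=1-O(1/n)$ absorbs the extra multiplicities. It cannot absorb any constant factor exceeding $1$.

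Your fallback, an extra factor of order $n$ at each step, is worse. A multiplicative loss incurred at every inductive step gives roughly $n\binom{n-2}{2}\mathcal{F}_{P_n}$ from the hypothesis, not $\binom n2\mathcal{F}_{P_n}$. The factor $\binom n2$ can only pay for an additive loss.

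Your structural claim about the leaves at $v$ is also false: in $S_5$ every minimal fort contains exactly two of the four leaves. The claim also ignores that the parent $u$ can be one of $v$'s two fort neighbours.

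\textbf{The missing idea.} Split on $d=\deg w$, where $w$ is the parent of a deepest leaf $v$ and $x$ is the parent of $w$.

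If $d=2$, there are two cases.
When $v,w\in F$, necessarily $x\notin F$, and $F\setminus\{v,w\}$ is a minimal fort of $R-\{v,w,x\}$.
When $w\notin F$, we have $v\in F\iff x\in F$, and $F\setminus\{v\}$ is a minimal fort of $R-\{v,w\}$.
Together these give $\mathcal{F}_{R_n}\le\mathcal{F}_{R_{n-2}}+\mathcal{F}_{R_{n-3}}$ with coefficients exactly $1$. Conditioning on $x$ is what makes the restriction minimal, which disposes of your ``main obstacle.''

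If $d\ge 3$, then $w\notin F$; otherwise all its leaves lie in $F$, giving three fort vertices in a row. Hence $w$ sees $0$ or $2$ fort vertices.
If it sees two leaves, the cut-edge lemma forces $F$ to be exactly those two leaves, contributing only an additive $\binom{d-1}{2}$.
Otherwise, deleting $w$ and its $d-1$ leaves leaves a minimal fort of a forest on $n-d$ vertices, containing $x$ in the one-leaf case.
So $\mathcal{F}_{R_n}\le\binom{d-1}{2}+(d-1)\mathcal{F}_{R_{n-d}}$.

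The multiplicity $d-1$ is paid for by the index drop $d$, because $(d-1)\psi^{-d}\le 1/(e\psi\ln\psi)\approx 0.9876<1$ for every $d$. The additive star term is absorbed via $\binom{d-1}{2}+\binom{n-d}{2}\le\binom{n-1}{2}$.

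Because this margin is thin, the Binet error in $\mathcal{F}_{P_{n-d}}\approx\psi^{-d}\mathcal{F}_{P_n}$ must be under 1\%. That is what forces the base case up to $n=19$, and large $d$ needs a separate estimate.

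Finally, the case $d=n-2$ must be treated directly, since $\mathcal{F}_{R_2}=2$ violates the bound. For the same reason, components with one or two vertices break the superadditivity claimed in your Step 1.
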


This bound posits strict bounds on the asymptotic growth rates for the number of minimal forts in trees: they cannot grow much faster than the path graph, $P_n$.  As detailed in \cite{becker2025number} and as we will recall in Section \ref{sec:prelim}, $\mathcal{F}_{P_n} \approx  k_1 \times 1.32472^n$ for some constant $k_1$. Therefore, Theorem \ref{thm:main} limits the number of minimal forts for any tree to approximately $1.32472^{n+o(n)}$. As it turns out, we will show that for at least $n \le 73$, there are trees on $n$ vertices with more minimal forts than the path graph $P_n$; hence, removing the multiplier factor of ${n \choose 2}$ entirely is not possible. 

Our main approach is largely combinatorial by adapting the recursion relation that defines $\mathcal{F}_{P_n}$ to trees in general (Lemma \ref{lem:recurr}). One obstacle is that the base case for this approach is rather large. While we do not join the small but growing trend of using ``AI" for mathematical proof, our base case for strong induction requires calculating the number of minimal forts for all forests up to $n=19$ vertices. Hence, we resort to a computer and an optimized algorithmic search to establish our base case (Section \ref{sec:algorithm} and Algorithms \ref{alg:minimal_forts} and \ref{alg:decide}).

In summary, our contributions are the following:
\begin{itemize}
    \item We give a purely structural characterization of minimal forts on trees. (Lemma \ref{lem:lemmaforalgorithm}, Section \ref{sec:lemmas})
    \item We develop an algorithm that efficiently computes the number of minimal forts on trees. (Algorithms \ref{alg:minimal_forts} and \ref{alg:decide}, Section \ref{sec:algorithm})
    \item We apply the algorithm to determine the case of trees on $n$ vertices that have the maximum number of minimal forts for $n=4$ to $n=19$ among other computational results. (Sections \ref{sec:basecase} and \ref{sec:compresults})
    \item We provide a general path-like recursion relation that applies to all trees. (Lemma \ref{lem:recurr}, Section \ref{sec:recurr})
    \item We develop combinatorial inequalities and prove the main result, Theorem 1. (Section \ref{sec:mainresult})
    \item We generate several open equations from our study. (Section \ref{sec:conclude})
\end{itemize}

\section{Definitions and Preliminaries} \label{sec:prelim}

\subsection{Graph Theoretic Terminology}

Throughout, we will use standard graph theoretic terminology. For a graph (or subgraph) $G$, we use $V(G)$ to denote the vertex set of $G$. All graphs will be simple and finite, and unless specified, undirected. A {\it forest} is a graph with no cyclic subgraph and a {\it tree} is a connected forest. A {\it leaf} of a graph is a vertex with degree one. Two vertices $u$ and $v$ are {\it adjacent} if they share an edge, which we denote by $u \sim v$, in which case we also say that $u$ is a neighbor of $v$ and vice versa. The neighborhood of $u$ is the set of all neighbors of $u$, not including $u$; we denote the neighborhood as $N(u)$. 

We let $P_n$ denote the path graph on $n$ vertices (which has $n-1$ edges). We let $S_n$ denote the star graph on $n$ vertices.   We define a special tree $T(n,k,m,p)$ to be the rooted tree with height 2 with root $r$ where root $r$ has $k$ children, among those $k$ children $k-p$ have $m$ children and $p$ have $m-1$ children; hence, necessarily, the number of vertices is $n = 1 + k + (k-p)m + p(m-1) = 1 + k + k m - p$.

Within a tree, $T$, the \emph{branch} or \emph{path branch} for a leaf $v$ is the maximal connected induced subgraph that contains $v$ and only vertices of degree one or two in $T$. The \emph{neighbor of a path branch} is the unique vertex (if it exists) not in the path branch that has a neighbor as a vertex in the path branch.

The study of forts arises from zero forcing. Recall that in zero forcing, an initial set of vertices are {\it colored} and others {\it uncolored}. The {\it color-change rule} says that a colored vertex with only one uncolored neighbor may {\it force} that uncolored vertex to become colored. A set of vertices is a {\it zero forcing set} if, when colored and after iteratively applying the color-change rule, all vertices of the graph become colored. We will refer to a set of vertices that is not a zero forcing set as a {\it failed forcing set}.

A {\it fort} of a graph $G$ is a nonempty set of vertices $F \subseteq V(G)$ such that every vertex in $\overline F$ is adjacent to either zero or two or more vertices in $F$. In the context of zero forcing, $F$ is a fort if whenever $\overline F$ is colored, no more forces are possible; hence if $F$ is a fort, $\overline F$ is a failed forcing set. Because of this relationship, we may refer to vertices in $\overline F$ as ``colored''. A fort, $F$, is called {\it minimal} if there is no proper subset of $F$ that is also a fort. 

We have the following property:

\begin{proposition}
A minimal fort $F$ has the property that if $\overline F$ and any one additional vertex are colored, then the entire graph will eventually be colored.
\end{proposition}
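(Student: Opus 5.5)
We argue by contradiction, using the standard link between failed forcing sets and forts: the set of vertices left uncolored at the end of a forcing process is a fort, unless it is empty. Let $F$ be a minimal fort, pick any $v \in F$, and color $\overline F \cup \{v\}$. Apply the color-change rule until no further forces are possible, and let $C$ be the final colored set. If $C = V(G)$ we are done, so suppose not. Set $F' = V(G)\setminus C$, which is nonempty.

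First we show that $F'$ is a fort. Take any vertex $u \notin F'$, so $u$ is colored. If $u$ had exactly one neighbor in $F'$, then $u$ would be a colored vertex with exactly one uncolored neighbor. The color-change rule would then apply, contradicting that the process has terminated. So every vertex outside $F'$ has zero or at least two neighbors in $F'$, and since $F'$ is nonempty it is a fort.

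Next we compare $F'$ with $F$. Coloring only adds vertices, so $C \supseteq \overline F \cup \{v\}$. Hence $F' \subseteq F \setminus \{v\}$, which is a proper subset of $F$. This contradicts the minimality of $F$. Therefore $C = V(G)$, meaning the whole graph is eventually colored.

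The only point needing care is the observation that the terminal uncolored set is closed under the fort condition; this follows directly from the definition of termination. The rest is the monotonicity of the coloring process. No earlier results of the paper are needed.
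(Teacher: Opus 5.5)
Your proof is correct and follows the same route as the paper's: let the forcing process from $\overline F \cup \{v\}$ terminate, observe that the uncolored remainder is a fort, and note that it sits inside $F$, contradicting minimality. You are slightly more careful than the paper. You check explicitly that the remainder is nonempty, and you show it lies in $F \setminus \{v\}$, which makes the containment proper. The paper only writes $\overline E \subseteq F$.
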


\begin{proof}
    Suppose $F \subset G$ is a minimal fort such that there is a vertex $f \in F$ such that $\overline F \cup \{f\}$ will not force the entire graph. Let $E$ be the set of vertices that are colored after exhaustively applying the color-change rule when $\overline F \cup \{f\}$ is initially-colored. Since no more forces are possible, $\overline E$ is a fort. However necessarily, $E \supseteq \overline F \cup \{f\} \supseteq \overline F$ and so $\overline E \subseteq F$ which violates the minimality of $F$.
\end{proof}
For emphasis, a minimal fort need not be a minimum-sized fort.

This study largely concerns enumerating the number of minimal forts in a given graph. For a graph $G$, let $\mathcal{F}_{G}$ denote the number of minimal forts of $G$. In  a mild abuse of notation, we let $\mathcal{F}_{T_n}$ denote the maximum number of minimal forts over all trees on $n$ vertices and we let $\mathcal{F}_{R_n}$ be the maximum number of minimal forts over all forests on $n$ vertices\footnote{We use ``$R$'' for forests to distinguish it from our notation for forts: ``$F$'' or ``$\mathcal{F}$''; ~\foreignlanguage{vietnamese}{``Rừng''} is Vietnamese for ``Forest''.}.

\subsection{Facts and Results about Minimal Forts on Forests and Trees}



\begin{proposition}[See \cite{becker2025number}] \label{prop:path}
For $n\ge 4$,
\[\mathcal{F}_{P_n} = \mathcal{F}_{P_{n-2}} +  \mathcal{F}_{P_{n-3}}\]
That is, $\mathcal{F}_{P_{n}}$ obeys the recurrence relation $a_n = a_{n-2} + a_{n-3}$ with $a_1 = a_2 = a_3 = 1$.\hfill $\qed$
\end{proposition}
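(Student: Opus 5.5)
We prove the recurrence by classifying the minimal forts of $P_n$ according to how they meet the two leftmost vertices. Label the path $v_1 v_2 \cdots v_n$. We first characterize forts on a path. A vertex $v_i \notin F$ must have $0$ or $2$ neighbours in $F$. Hence an endpoint $v_1$ lies outside $F$ only if $v_2 \notin F$. Likewise, an interior uncolored vertex has either both neighbours in $F$ or neither. From this we derive the standard description of minimal forts.

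\textbf{Structural claim.} A minimal fort $F$ contains both endpoints. Consecutive fort vertices are separated by exactly one or two non-fort vertices, and no two fort vertices are adjacent. We argue each part as follows.
\begin{enumerate}
\item If $v_1 \notin F$, let $i$ be minimal with $v_i \in F$. Then $v_{i-1}$ has exactly one neighbour in $F$ (namely $v_i$, since $v_{i-2}\notin F$), a contradiction. So $v_1 \in F$, and symmetrically $v_n \in F$.
\item Between consecutive fort vertices $v_i, v_j$, the uncolored run $v_{i+1},\dots,v_{j-1}$ has its first vertex with only one fort neighbour unless $j-1 = i+1$. Runs of length $\ge 3$ therefore fail, and runs of length $2$ fail as well. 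Thus runs have length $0$ or $1$, and forts are sets of endpoints-containing vertices with gaps of $0$ or $1$.
\end{enumerate}
This needs correction: for a run of length $2$, $v_{i+1}$ has the single fort neighbour $v_i$, so it fails. Hence every fort is a set containing $v_1,v_n$ whose complement consists of isolated vertices. Minimality forbids adjacent fort vertices $v_i,v_{i+1}$ when possible, since then a smaller fort exists.

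\textbf{Clean minimality criterion.} Concretely, I would establish that minimal forts are exactly the sets containing $v_1$ and $v_n$ in which consecutive chosen indices differ by $2$ or $3$, with no difference-$3$ gap being reducible. Checking the small cases $n=1,2,3$, with $\{v_1\}$, $\{v_1,v_2\}$ and $\{v_1,v_3\}$, confirms the base values $a_1=a_2=a_3=1$. I would verify the minimality rule by the Proposition above: coloring $\overline F$ plus one vertex must propagate everywhere. Gaps of $2$ or $3$ propagate, whereas gaps of $\ge 4$ would let the fort split into smaller forts. The main obstacle is handling this step rigorously, i.e.\ proving that the allowed gaps are exactly $\{2,3\}$ and that every such set is indeed minimal.

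\textbf{Bijection.} Given the characterization, look at the second chosen vertex after $v_1$. It is either $v_3$ or $v_4$. Deleting $v_1,v_2$ (respectively $v_1,v_2,v_3$) yields a minimal fort of $P_{n-2}$ (respectively $P_{n-3}$) starting at its first vertex. Conversely, prepending restores a minimal fort of $P_n$. This bijection gives $a_n=a_{n-2}+a_{n-3}$ for $n \ge 4$, where $n\ge4$ ensures both the remaining paths are nonempty and the endpoint conditions are consistent.
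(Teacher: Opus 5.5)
There is a genuine gap. The characterization of minimal forts that your bijection rests on is false, and it contradicts what you correctly derived a few lines earlier. Your step 2 shows that every run of non-fort vertices has length $0$ or $1$, so consecutive fort indices differ by $1$ or $2$. Yet your ``clean minimality criterion'' requires differences $2$ or $3$, and your structural claim says no two fort vertices are adjacent. A difference of $3$ between consecutive fort vertices $v_i, v_{i+3}$ leaves $v_{i+1}$ with exactly one neighbour in $F$, so such a set is not even a fort. Adjacent fort vertices do occur: your own base case $\{v_1,v_2\}$ for $P_2$, and both minimal forts $\{v_1,v_2,v_4\}$ and $\{v_1,v_3,v_4\}$ of $P_4$. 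Under your criterion, $P_2$ would have no minimal fort, and $P_4$ would have exactly one, $\{v_1,v_4\}$, which is not a fort. Your final recurrence has the right shape only by coincidence: compositions of $n-1$ into parts $2$ and $3$ also satisfy $c_m=c_{m-2}+c_{m-3}$. But it counts the wrong family with the wrong initial values, so it does not prove the statement. Likewise, ``minimality forbids adjacent fort vertices when possible'' is wrong, since $\{v_1,v_2,v_4,v_5\}$ is a minimal fort of $P_5$.

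The correct criterion has three parts: $F$ contains $v_1$ and $v_n$, consecutive fort indices differ by $1$ or $2$, and $F$ contains no three consecutive vertices (Corollary~\ref{lem:three_uncolored_in_a_row}). Sufficiency holds because any sub-fort must still contain both endpoints and use differences in $\{1,2\}$. Its differences are sums of consecutive blocks of the original ones, and a merged block has sum at most $2$ only if it is the pair $1,1$, i.e.\ three consecutive fort vertices. Your ``$2$ or $3$'' rule is actually correct for the \emph{non-fort} vertices once sentinels at positions $0$ and $n+1$ are added. That identifies minimal forts of $P_n$ with compositions of $n+1$ into parts $2$ and $3$, which is another valid route.

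With the right criterion, split on $v_2$ rather than on whether the second chosen vertex is $v_3$ or $v_4$:
\begin{itemize}
\item If $v_2\notin F$, then $v_3\in F$, and deleting $v_1,v_2$ gives a minimal fort of $P_{n-2}$ on $v_3,\dots,v_n$.
\item If $v_2\in F$, then $v_3\notin F$ and $v_4\in F$, and deleting $v_1,v_2,v_3$ gives a minimal fort of $P_{n-3}$.
\end{itemize}
Both maps are inverted by prepending. Every minimal fort of the shorter path contains its first vertex, and the prepended pattern creates no run of three fort vertices. This is Case~1 of Lemma~\ref{lem:recurr} together with Lemma~\ref{lem:three_vertex_branch}, specialized to the path, where the inequalities become equalities. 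The paper itself gives no proof of this proposition and simply cites \cite{becker2025number}.
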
 

The solution to this recurrence relation is $\mathcal{F}_{P_{n}} = k_1 \psi^n + k_2 \omega_2^n + k_3 \omega_3^n$ where we define $\psi \approx 1.32472$ to be the real root to $z^3-z-1 = 0$ and the other bases \[ \omega_2, \omega_3 = -\frac{\psi}{2} \pm \frac{i}{2}\sqrt{\frac{3-\psi}{\psi}} \approx -0.662359 \pm 0.56228 i\] are the complex roots of the polynomial $z^3-z-1 = 0$. The coefficient $k_1 = \psi^4 / (2\psi+3) \approx 0.545116$ and the other coefficients
\[ k_2, k_3 = \frac{2 - 7\psi - 3\psi^2}{46} \mp i \left( \frac{7 - 3\psi}{46} \right) \sqrt{\frac{3-\psi}{\psi}} \approx -0.272558 \mp 0.0739727 i.\]
We will use $ \varepsilon_n := k_2 \omega_2^n + k_3 \omega_3^n$. An important property we use is $|\omega_2|=|\omega_3| < 1$ and $|k_2|=|k_3|<1$.  A basic calculation has $|\varepsilon_n| < 1$ for all $n > 0$; this yields the following:

    \begin{proposition}
     Let $k_1 = \frac{\psi^4}{2\psi+3}$. For any $n \ge 0$,
        \[ k_1 \psi^n - 1 < \mathcal{F}_{P_{n}} < k_1 \psi^n + 1.\] 
    \end{proposition}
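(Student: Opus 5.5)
The plan is to write $\mathcal{F}_{P_n} = k_1\psi^n + \varepsilon_n$ and prove $|\varepsilon_n| < 1$ for every $n \ge 0$. Both inequalities of the statement follow immediately. The closed form itself comes from Proposition \ref{prop:path}. The recurrence $a_n = a_{n-2}+a_{n-3}$ has characteristic polynomial $z^3 - z - 1$ with the three distinct roots $\psi, \omega_2, \omega_3$, so every solution has the form $k_1\psi^n + k_2\omega_2^n + k_3\omega_3^n$. The constants are pinned down by the initial values $a_1=a_2=a_3=1$. To cover $n=0$, we extend backwards by $a_0 = a_3 - a_1 = 0$. One can check that the stated $k_1 = \psi^4/(2\psi+3)$, together with the conjugate pair $k_2, k_3$, solves this Vandermonde system; I will take these values as given from the preceding paragraph of the paper.

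Because $k_3 = \overline{k_2}$ and $\omega_3 = \overline{\omega_2}$, we have $\varepsilon_n = 2\,\mathrm{Re}(k_2\omega_2^n)$. This gives the bound $|\varepsilon_n| \le 2|k_2|\,|\omega_2|^n$.

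Next I estimate the moduli numerically.
\begin{itemize}
\item $|\omega_2|^2 = \omega_2\omega_3 = 1/\psi$, since the product of all three roots is $1$. Hence $|\omega_2| = \psi^{-1/2} \approx 0.8688 < 1$.
\item $|k_2| \approx \sqrt{0.272558^2 + 0.0739727^2} \approx 0.2824$, so $2|k_2| \approx 0.565$.
\end{itemize}
It follows that $|\varepsilon_n| \le 0.565 \cdot (0.8688)^n < 1$ for all $n \ge 0$, including $n=0$.

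The final inequalities are then $\mathcal{F}_{P_n} - k_1\psi^n = \varepsilon_n \in (-1,1)$. For $n=0$ this reads as the statement about $a_0=0$, which is consistent with $k_1 \approx 0.545 < 1$.

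The main obstacle is bookkeeping rather than ideas. Two things need care. First, the closed-form constants must genuinely match the initial conditions, and in particular the indexing must agree with $a_1=a_2=a_3=1$. Second, the numerical estimate of $|k_2|$ must be rigorous. For the second point I would bound $|k_2|$ crudely by using $1.32 < \psi < 1.33$ in the explicit formula. Any bound $|k_2| < 1/2$ suffices, and there is ample slack.
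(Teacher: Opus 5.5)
Your proof is correct and follows the paper's route: write $\mathcal{F}_{P_n}=k_1\psi^n+\varepsilon_n$ and show $|\varepsilon_n|<1$. You also supply the ``basic calculation'' the paper leaves implicit: the paper's listed facts $|k_2|,|\omega_2|<1$ alone would only give $|\varepsilon_n|<2$, so your estimate $2|k_2|\,|\omega_2|^n\le 2|k_2|\approx 0.565$ is what is actually needed, and your handling of $n=0$ via $a_0=\mathcal{F}_{P_0}=0$ covers a case the paper's ``$n>0$'' remark omits.

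One bookkeeping remark, which does not affect your argument since only $|k_2|=|k_3|$ enters: if you carry out the Vandermonde check you say ``one can'' do, you will find that the coefficient of $\omega_2^n$ (the root with positive imaginary part) is $k_3=\overline{k_2}$, not $k_2$ as the paper labels them. In other words, the paper's $\mp$ should be $\pm$.
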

    \hfill \qed

\begin{proposition}[See \cite{becker2025number}]
For $n \ge 3$, $\mathcal{F}_{S_n} = {{n-1}\choose 2}$
\end{proposition}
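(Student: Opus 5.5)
The plan is to characterize the minimal forts of the star $S_n$ directly and then count them. Let $c$ be the center and $\ell_1,\dots,\ell_{n-1}$ the leaves, with $n\ge 3$.

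First, I will show that $c$ lies in no minimal fort. If $F$ contains $c$ and some leaf $\ell_i$ lies outside $F$, then $\ell_i$ has exactly one neighbor in $F$, namely $c$, which violates the fort condition. So any fort containing $c$ contains every leaf, and hence equals $V(S_n)$. But for $n\ge 3$ the set $\{\ell_1,\ell_2\}$ is itself a fort: the vertex $c$ has two neighbors in it, and every other leaf has none. Since $\{\ell_1,\ell_2\}$ is a proper subset of $V(S_n)$, the whole vertex set is not a minimal fort.

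Next, I will classify forts consisting only of leaves. Let $F\subseteq\{\ell_1,\dots,\ell_{n-1}\}$ be nonempty. Each leaf outside $F$ is adjacent only to $c\notin F$, so it has zero neighbors in $F$, which is allowed. The center $c$ has exactly $|F|$ neighbors in $F$. Hence $F$ is a fort if and only if $|F|\ge 2$.

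Finally, I will identify the minimal ones. Any leaf set of size at least $3$ properly contains a pair of leaves, which is a fort, so it is not minimal. A pair of leaves is minimal, because its only nonempty proper subsets are singletons, and a single leaf is not a fort since $c$ would have exactly one neighbor in it. Therefore the minimal forts are exactly the $\binom{n-1}{2}$ pairs of leaves, giving $\mathcal{F}_{S_n}=\binom{n-1}{2}$.

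There is no real obstacle in this argument. The one point needing care is the step excluding the center, which is where the hypothesis $n\ge 3$ is used: it guarantees that two leaves exist, so the full vertex set is not minimal.
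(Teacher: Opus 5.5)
Your proof is correct and complete.

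The paper does not prove this proposition itself; it is quoted from Becker et al. The nearest in-paper comparison is the proof of the formula for $\mathcal{F}_{T(n,k,m,p)}$. That proof excludes a vertex with leaf neighbors from minimal forts using Corollary~\ref{lem:three_uncolored_in_a_row}. It then uses Lemma~\ref{lem:cutvertex} to show that, when such a vertex is absent, the fort is exactly a pair of sibling leaves.

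You instead work directly from the definition of a fort:
\begin{itemize}
\item a fort containing the center must contain every leaf, so it is the whole vertex set;
\item a set of leaves is a fort exactly when it has at least two elements.
\end{itemize}
This route is more elementary and self-contained. It also classifies \emph{all} forts of $S_n$ (the full vertex set and the leaf sets of size at least two), so minimality follows at once. The paper's lemma-based style is quicker once that machinery is available, and it extends to the height-two trees $T(n,k,m,p)$, where the direct counting would be messier.
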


Recall that $T(n,k,m,p)$ is the tree of height 2 with root $r$ where root $r$ has $k$ children, among those $k$ children $k-p$ have $m$ children and $p$ have $m-1$ children. As it turns out for specific parameters,  $T(n,k,m,p)$ has more minimal forts than the path graph.

\begin{proposition}
Choose integers $k \ge 2$, $m \ge 3$, and $0 \le p \le k$. Let $n = 1 + k + (k-p)m + p(m-1)$. Then,

\[ \mathcal{F}_{T(n,k,m, p)} = m^{k-p} (m-1)^p + (k-p) {m \choose 2} + p {{m-1} \choose 2}. \]
\end{proposition}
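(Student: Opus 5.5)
The plan is to characterize every minimal fort of $T=T(n,k,m,p)$ directly from the definition and then count them. Write $r$ for the root, $c_1,\dots,c_k$ for its children, and $L_i$ for the set of leaves below $c_i$. Here $|L_i|=m$ for $k-p$ indices and $|L_i|=m-1$ for the other $p$ indices. The hypothesis $m\ge 3$ guarantees $|L_i|\ge 2$ for every $i$, and this is the only place it is used. I expect to find exactly two families of minimal forts:
\begin{itemize}
\item[(A)] $\{\ell,\ell'\}$ with $\ell\neq \ell'$ both in the same $L_i$;
\item[(B)] $\{r,\ell_1,\dots,\ell_k\}$ with exactly one leaf $\ell_i\in L_i$ for each $i$.
\end{itemize}
Family (A) contributes $(k-p)\binom{m}{2}+p\binom{m-1}{2}$ and family (B) contributes $\prod_i |L_i| = m^{k-p}(m-1)^p$, which together give the stated formula.

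\textbf{Step 1: both families are forts.} For (A), the vertex $c_i$ has exactly two neighbors in $F$. Every other vertex outside $F$ has no neighbors in $F$. For (B), each $c_i$ has exactly the two neighbors $r$ and $\ell_i$ in $F$, and the leaves outside $F$ have none.

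\textbf{Step 2: a minimal fort contains no $c_i$.} Let $F$ be any fort with $c_i\in F$. Each leaf of $L_i$ outside $F$ would then have exactly one neighbor in $F$, which is forbidden, so $L_i\subseteq F$. Since $|L_i|\ge 2$, $F$ properly contains a fort of type (A), so $F$ is not minimal.

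\textbf{Step 3: classify minimal forts $F$ with no $c_i$, splitting on whether $r\in F$.}
\begin{itemize}
\item If $r\notin F$, then $F$ consists only of leaves, and each $c_i$ must see either $0$ or at least $2$ of its leaves in $F$. Since $F$ is nonempty, some $L_i$ meets $F$ in at least two leaves. Two such leaves already form a fort of type (A), so minimality forces $F$ to be exactly a type-(A) set.
\item If $r\in F$, then each $c_i\notin F$ already has $r$ as one neighbor in $F$, so it needs at least one leaf of $L_i$ in $F$. If some $L_i$ contributed two leaves to $F$, those two leaves would form a smaller fort of type (A). Hence $F$ has exactly one leaf from each $L_i$, so $F$ is of type (B).
\end{itemize}

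\textbf{Step 4: every set in (A) and (B) is minimal.} For (A) this is immediate, since a single leaf is never a fort. For (B), take a nonempty fort $F'\subsetneq F$.
\begin{itemize}
\item If $r\in F'$, the argument of Step 3 forces $F'$ to contain a leaf from every $L_i$, so $F'=F$, a contradiction.
\item If $r\notin F'$, each $c_i$ sees at most one leaf of $F'$, hence exactly zero, so $F'=\emptyset$, again a contradiction.
\end{itemize}

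The only step requiring real care is the exhaustiveness argument in Steps 2 and 3, in particular ensuring that the case split on $c_i\in F$ and $r\in F$ covers every possibility. The hypothesis $k\ge 2$ is not needed for the argument itself.
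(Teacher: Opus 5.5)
Your proof is correct. It has the same skeleton as the paper's proof:
\begin{enumerate}
\item exclude the middle vertices $c_i$ (the paper's $b_i$) from every minimal fort;
\item split on whether $r\in F$, which yields sibling-leaf pairs when $r\notin F$ and the transversals $\{r,\ell_1,\dots,\ell_k\}$ when $r\in F$.
\end{enumerate}

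The difference lies in how each step is justified. The paper relies on its general tree machinery:
\begin{itemize}
\item Corollary~\ref{lem:three_uncolored_in_a_row} to exclude $b_i$;
\item Corollary~\ref{cor:treenot3neighbors} to get exactly one leaf under each $b_i$ when $r\in F$;
\item Lemma~\ref{lem:cutvertex} to settle the case $r\notin F$;
\item a forcing argument (coloring $r$, or any chosen leaf, propagates to the whole tree) for minimality of the type-(B) sets.
\end{itemize}
That last step tacitly uses the converse of the paper's opening proposition: if $\overline F\cup\{f\}$ is a zero forcing set for every $f\in F$, then $F$ is minimal, which follows from monotonicity of the forcing closure.

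You instead work directly from the definition of a fort. The explicit two-leaf forts of type (A) serve as your universal witness of non-minimality, and you check minimality of the (B) sets by directly analysing their proper subsets.

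Your version is self-contained and more elementary. It also makes transparent that $m\ge 3$ is used only to guarantee $|L_i|\ge 2$ in Step~2, and that $k\ge 2$ is unnecessary. The paper's version is shorter given its lemmas and serves to illustrate them.

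One small addition: when summing the two counts, state that families (A) and (B) are disjoint (every set in (B) contains $r$ and none in (A) does), as the paper does explicitly.
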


\begin{proof}
Let $r$ denote the root; let $b_{1}, \ldots, b_{k}$ denote its $k$ children; let $a_{i,1}, a_{i,2}, a_{i,3}, \ldots,  a_{i,m}$ denote the children of $b_i$ for $i \le k-p$; let $a_{i,1}, a_{i,2}, a_{i,3}, \ldots,  a_{i,m-1}$ denote the children of $b_i$ for $i > k-p$; 


Let $F$ be a minimal fort. If $b_i \in F$, then by Corollary \ref{lem:three_uncolored_in_a_row}, at most one of its neighbors is in $F$. This would mean that at least one of $a_{i,1}, a_{i,2}, a_{i,3}, \ldots,  a_{i,\ell}$ is not in $F$. However, this violates the definition of a fort as that leaf neighbor has exactly one neighbor in $F$, $b_i$. We can assume that $b_i \not\in F$ for every $i$.

There are two cases: either $r \in F$ or $r \notin F$. 

If $r \in F$, then because of Corollary \ref{cor:treenot3neighbors}, every $b_i$ must have exactly one leaf neighbor $a_{i,\ell} \in F$. Such a selection is a fort as each $b_i$ is adjacent to exactly two vertices in $F$ and each $a_{i,m}$ not chosen is only adjacent to $b_i$ which is not in $F$. This fort is minimal as coloring $r$ will allow each $b_i$ to force their leaf neighbor in $F$; likewise, coloring any $a_{i,\ell} \in F$ will allow $b_i$ to force $r$ which will allow each remaining $b_j$ to force their leaf neighbor in $F$.

If $r \not\in F$, then since $F$ is nonempty, there must be at least one leaf $a_{i,j} \in F$. By Lemma \ref{lem:cutvertex}, since $r, b_i \not\in F$, there must be exactly two $a_{i,j}, a_{i,j'} \in F$ as the only elements in $F$. This selection is a fort as only $b_i$ is adjacent to vertices in $F$ and is adjacent to exactly two of them. This set is minimal as coloring either vertex will allow $b_i$ to force the other.

Notice there are $m^{k-p} (m-1)^p$ ways to make the first selection and there are $(k-p) {m \choose 2} + p {{m-1} \choose 2}$ ways to make the second selection and these selections are necessarily disjoint.

\end{proof}

For $n$ with $n-1$ divisible by $5$, we have $\mathcal{F}_{T(n,\frac{n-1}{5},4,0)} = 4^{(n-1)/5} + \frac{n-1}{5} \cdot 6 \approx C \cdot 1.31951^n$ with 
$C = 4^{-1/5} \approx 0.757858$. To compare with $P_n$, $ \mathcal{F}_{P_n} \approx k_1 \cdot 1.32472^n$ 
with $k_1 = \frac{\psi^4}{2\psi+3} \approx 0.545116$. Hence, while $\mathcal{F}_{T(n,\frac{n-1}{5},4,0)}$ grows slower than $\mathcal{F}_{P_n}$ for $n$ large, the coefficient $4^{-1/5} \approx 0.757858$ allows $\mathcal{F}_{T(n,\frac{n-1}{5},4,0)}$ to be larger than $\mathcal{F}_{P_n}$ for smaller $n$. 

Indeed, for $p=0, 1, 2, 3, 4$, one can numerically verify that for valid choices of $n$, $\mathcal{F}_{T(n,\frac{n-1+p}{5},4,p)} \ge \mathcal{F}_{P_n}$ for all $n$ up to $n=73$. Since all $n$ are possible with at least one of these choices of $p$, the path graph is not the tree with the maximum number of minimal forts until at least $n=73$. 

Figure \ref{fig:startreepathplot} shows that $\mathcal{F}_{T(n,\frac{n-1}{5},4,0)}$ overcomes $\mathcal{F}_{S_n}$ near $n \approx 20$. In Section \ref{sec:algorithm}, based on the results of an algorithm, we will show that indeed the unique tree achieving $\mathcal{F}_{T_{19}}  = 162$ is $T(19,4,4,2)$. 

\begin{figure}
\centering
\includegraphics[width=0.5\textwidth]{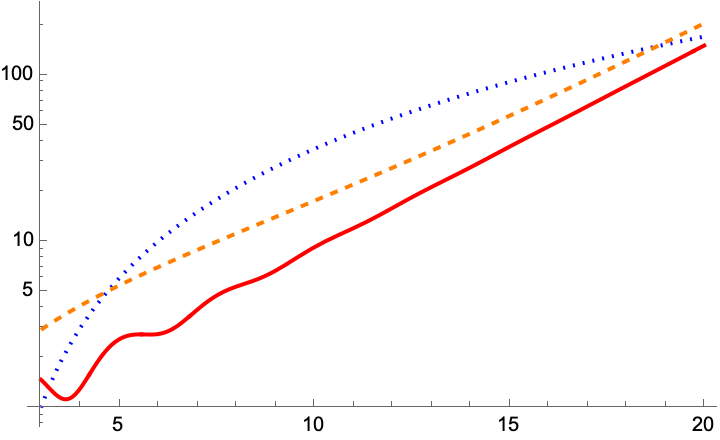}
\caption{Continuous log-plot of the functions describing the number of minimal forts of different families of trees including stars, $S_{n}$ (dotted blue), the trees $T(n,\frac{n-1}{5},4,0)$ (dashed orange) and paths, $P_n$ (solid red).}
\label{fig:startreepathplot}
\end{figure}

\subsection{Properties of Minimal Forts}
\label{sec:lemmas}

While we are only concerned about trees, some of the following lemmas apply for any connected graph $G$.

\begin{lemma} \label{lem:cutvertex}
Let $G$ be a connected graph and let $v$ be a cut-vertex of $G$ where $G_1, G_2, \ldots, G_c$ are the connected components of $G - \{v\}$. Then for any minimal fort, $F$, of $G$ where $v \notin F$, at most two $G_i$ have $V(G_i) \cap F \ne \emptyset $.
\end{lemma}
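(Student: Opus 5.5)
The plan is to argue by contradiction, using the characterization of minimal forts from the previous proposition. We may also use the even simpler observation that the fort property is preserved under suitable restriction.

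Suppose $F$ is a minimal fort of $G$ with $v\notin F$, and suppose at least three components, say $G_1,G_2,G_3$, meet $F$. The key observation is that since $v\notin F$, there are no edges between distinct components $G_i$. So for any $i$ the set $F_i := F\cap V(G_i)$ interacts with the rest of the graph only through $v$. The natural candidate for a smaller fort is therefore $F' := F\setminus V(G_3) = F\cap\bigl(V(G_1)\cup V(G_2)\cup\cdots\bigr)$ with the $G_3$-part removed, or more simply $F' := F_1\cup F_2$.

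We verify that $F'=F_1\cup F_2$ is a fort. It is nonempty. Take any $u\notin F'$ and split into cases.

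\textbf{Case $u\in V(G_1)$.} Then $u\notin F$ as well. All neighbors of $u$ lie in $V(G_1)\cup\{v\}$, and $v\notin F$. Hence $|N(u)\cap F'| = |N(u)\cap F_1| = |N(u)\cap F|$, which is $0$ or at least $2$ because $F$ is a fort. The case $u\in V(G_2)$ is symmetric.

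\textbf{Case $u\in V(G_j)$ with $j\ge 3$.} The neighbors of $u$ lie in $V(G_j)\cup\{v\}$, so $u$ has $0$ neighbors in $F'$.

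\textbf{Case $u=v$.} Since $F_1,F_2\neq\emptyset$ this is where care is needed. We need $v$ to have $0$ or at least $2$ neighbors in $F_1\cup F_2$. This is the main obstacle. If $v$ has exactly one neighbor in $F_1\cup F_2$, the construction fails, so we must choose the pair of components well.

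To handle the $u=v$ case, let $d_i := |N(v)\cap F_i|$. Since $F$ is a fort and $v\notin F$, $\sum_i d_i \ne 1$. Among the (at least three) components meeting $F$, we choose a nonempty proper subcollection $I$ with $\sum_{i\in I} d_i\ne 1$, and take $F' = \bigcup_{i\in I}F_i$. The case analysis above works verbatim for any subcollection $I$.

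Such an $I$ always exists:
\begin{itemize}
\item If some component has $d_i=0$, take $I=\{i\}$. Then $v$ has $0$ neighbors in $F'$.
\item If at least two components have $d_i\ge1$, take $I$ to be two of them, giving a sum of at least $2$.
\item Otherwise at most one component has $d_i\ge1$. Since there are at least three components, some component has $d_i=0$, which falls under the first bullet.
\end{itemize}
In every case $I$ is a proper subcollection, because at least three components meet $F$. Therefore $F'\subsetneq F$ is a fort, contradicting the minimality of $F$. Hence at most two $G_i$ meet $F$.
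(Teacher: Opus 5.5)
Your argument is correct and is essentially the paper's: restrict $F$ to a proper subcollection of the components meeting $F$; since $v\notin F$, every vertex other than $v$ either keeps its count of $F$-neighbors or sees none, and the subcollection is chosen so that $v$ sees $0$ or at least $2$ of the remaining vertices. Your choice of $I$ via the counts $d_i$ is in fact slightly more careful than the paper's. The paper splits on whether $v$ has $0$ or at least $2$ neighbors in $F$, and then assumes ``without loss of generality'' that two such neighbors lie in different components, which glosses over the harmless case where they all lie in one component.
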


\begin{proof}
If $v$ has no neighbors in $F$, then we will show that $F \cap (V(G_1) \cup V(G_2))$ is still a fort. Since $v$ is a cut vertex, any vertex $u \in V(G_i)$ for $i=1,2$ has $N_G(u) \setminus \{v\} = N_{G_i}(u) = N_{G_1 \cup G_2}(u)$. Therefore $(N_G(u) \setminus \{v\}) \cap F = N_{G_1 \cup G_2}(u) \cap F$. However, since $v \notin F$, $N_G(u) \cap F = N_{G_1 \cup G_2}(u) \cap F$. Hence, any vertex $u \in (V(G_1) \cup V(G_2)) \cap F$ will have the same number of neighbors in $F$ as $F \cap (V(G_1) \cup V(G_2))$.

If $v$ has at least two neighbors in $F$, without loss of generality, let those neighbors be in $V(G_1)$ and $V(G_2)$. 
In which case, the argument above still applies for any vertex $u \in (V(G_1) \cup V(G_2)) \cap F$, as well as for $v$.

In either case, it follows that $F \cap (V(G_1) \cup V(G_2))$ is a fort of $G$ which violates the minimality of $F$.
\end{proof}

\begin{cor} \label{cor:treenot3neighbors}
Let $T$ be a tree, for any minimal fort $F$, there does not exist distinct $a, b, c \in F$ and $u \in V(T)$ with $a, b, c \in (N(u) \cup \{u\})$.
(i.e., for any $u \in V(T)$, $|(N(u) \cup \{u\}) \cap F| = 0$ or $2$.)
\end{cor}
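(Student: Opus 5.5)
The statement follows from two cases, according to whether the center vertex $u$ lies in $F$. In each case we exhibit a proper sub-fort of $F$, or invoke Lemma \ref{lem:cutvertex} directly.

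The key structural fact is this. In a tree $T$, distinct neighbors of $u$ lie in distinct connected components of $T - \{u\}$, and each such component $C$ contains exactly one neighbor of $u$. Consequently, for a vertex $w \in V(C)$, the only neighbor of $w$ outside $C$ is possibly $u$, and this happens only when $w$ is that unique neighbor.

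\textbf{Case $u \notin F$.} Then $a, b, c$ are all neighbors of $u$. Since $u$ has at least three neighbors, $u$ is a cut vertex, and by the fact above $a, b, c$ lie in three distinct components of $T - \{u\}$. Hence at least three components meet $F$, which contradicts Lemma \ref{lem:cutvertex}.

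\textbf{Case $u \in F$.} Without loss of generality $u = a$, and $b, c$ are neighbors of $u$ in $F$. Let $C_b$ and $C_c$ be the components of $T-\{u\}$ containing $b$ and $c$. Set
\[ F' = F \cap \bigl(V(C_b) \cup V(C_c)\bigr). \]
This set is nonempty, since it contains $b$ and $c$, and it is a proper subset of $F$, since $u \notin F'$. We check that $F'$ is a fort by examining each vertex $w \notin F'$.
\begin{itemize}
\item If $w = u$, then $w$ is adjacent to $b, c \in F'$, so it has at least two neighbors in $F'$.
\item If $w$ lies in a component other than $C_b$ or $C_c$, then $w$ has no neighbors in $F'$.
\item If $w \in V(C_b) \setminus F$ (or symmetrically in $C_c$), then $w \neq b$. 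By the structural fact, $w$ is not adjacent to $u$. Therefore every neighbor of $w$ lies in $C_b$, and $|N(w)\cap F'| = |N(w) \cap F|$, which is $0$ or at least $2$ because $F$ is a fort.
\end{itemize}
So $F'$ is a fort strictly contained in $F$, contradicting the minimality of $F$.

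The step that needs the most care is the choice of $F'$ in the second case. The natural first attempt keeps $u$ in the sub-fort, but that can leave some other neighbor of $u$ outside the sub-fort with exactly one neighbor in it. Dropping $u$ instead, and keeping only the two components through $b$ and $c$, avoids this problem: $u$ itself becomes a vertex outside the fort with exactly two neighbors in it.

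The parenthetical reformulation in the statement should be read as a consequence of the main claim: no closed neighborhood meets a minimal fort in three or more vertices.
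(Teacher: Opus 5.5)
Your proof is correct. The case $u\notin F$ is the paper's argument: three $F$-neighbors of $u$ lie in three components of $T-\{u\}$, which contradicts Lemma~\ref{lem:cutvertex}.

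For $u\in F$ you take a different route. The paper argues dynamically. It colors $u$ in addition to $\overline F$ and asserts that the resulting forces can never reach the two $F$-neighbors of $u$, because $u$ keeps two uncolored neighbors and $T$ is a tree. It then takes the complement of the final colored set as a smaller fort. You instead write down the sub-fort $F'=F\cap(V(C_b)\cup V(C_c))$ explicitly and verify the fort condition vertex by vertex. This is the restrict-to-two-branches trick from the proof of Lemma~\ref{lem:cutvertex}, applied with $u$ dropped from the fort so that $u$ becomes an outside vertex with exactly two fort neighbors.

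Your version is static and leaves nothing implicit. The paper's claim that no force reaches $b$ or $c$ actually needs your structural fact to be justified. The first force into $C_b\cup C_c$ cannot come from inside, since those vertices' neighborhoods are unchanged and $F$ was a fort. It also cannot come from $u$, which still has both $b$ and $c$ uncolored. The paper's version fits the zero-forcing viewpoint. The sub-fort it produces, the complement of the forcing closure of $\overline F\cup\{u\}$, contains your $F'$.

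You were also right to read the parenthetical as ``at most $2$''. Taken literally it is false: a leaf $u\in F$ whose neighbor lies outside $F$ has $|(N(u)\cup\{u\})\cap F|=1$. Either leaf of a two-leaf minimal fort of a star is an example.
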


\begin{proof}
If $u$ is a leaf, then if $u \not\in F$, its neighbor cannot be in $F$, so the result holds.

If $u$ is not a leaf and $u \notin F$, then since every non-leaf vertex of $T$ is a cut vertex, we can apply Lemma \ref{lem:cutvertex} and at most two of its neighbors can be in $F$. 

If $u$ is not a leaf and $u \in F$, then $u$ is adjacent to other distinct vertices $a$ and $b$. Observe that coloring $u$ may allow forces to occur, but these forces cannot force $a$ nor $b$ as $T$ is a tree and $u$ still has two uncolored neighbors. The resulting failed forcing set yields a smaller fort, violating the minimality of $F$.
\end{proof}

Corollary \ref{cor:treenot3neighbors} also implies following related statement.

\begin{cor} \label{lem:three_uncolored_in_a_row}
Let $T$ be a tree. If $F$ is a minimal fort of $T$, then $F$ does not contain three vertices $a,b,c$ with $a \sim b$ and $b \sim c$. \hfill $\qed$
\end{cor}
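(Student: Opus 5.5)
Corollary \ref{lem:three_uncolored_in_a_row} follows from Corollary \ref{cor:treenot3neighbors} by specializing the vertex $u$ there to the middle vertex $b$. Suppose for contradiction that a minimal fort $F$ of the tree $T$ contains three vertices $a,b,c$ with $a \sim b$ and $b \sim c$.

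First I check that $a,b,c$ are distinct. The graph is simple, so there are no loops and hence $a \ne b$ and $b \ne c$. The statement intends $a$ and $c$ to be two different neighbors of $b$; if $a=c$ the configuration is just an edge and is not what is meant. So I treat $a,b,c$ as distinct.

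Now take $u = b$. We have $b \in \{u\}$, and $a, c \in N(u)$. Therefore $a,b,c$ are three distinct elements of $F$ lying in $N(u) \cup \{u\}$. This is exactly the configuration that Corollary \ref{cor:treenot3neighbors} forbids, so we have a contradiction. Equivalently, $|(N(b) \cup \{b\}) \cap F| \ge 3$, which contradicts the parenthetical form of that corollary, which says this count is $0$ or $2$.

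I do not expect any real obstacle. The only substantive case is the one in Corollary \ref{cor:treenot3neighbors} where $u \in F$ and $u$ has two neighbors in $F$, and that case was already settled there. It was handled by adding $u$ to the colored set: $u$ then still has two uncolored neighbors, so neither $a$ nor $c$ can be forced, and the resulting failed forcing set yields a smaller fort, contradicting minimality. The present statement is just that case specialized to $u=b$, so the only thing to verify is the distinctness of $a,b,c$, which was done above.
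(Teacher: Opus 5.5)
Your proof is correct and matches the paper, which simply states that this corollary follows from Corollary~\ref{cor:treenot3neighbors}; specializing $u=b$ is exactly that deduction. One small caution: rely on the main statement of Corollary~\ref{cor:treenot3neighbors} rather than its parenthetical ``$0$ or $2$'' form, which is not literally accurate (a leaf in $F$ whose neighbor lies outside $F$ gives a count of $1$), although either form rules out a count of $3$.
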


\begin{lemma} \label{lem:cutedge}
Let $G$ be a connected graph and let $e = \{a,b\}$ be a cut-edge of $G$. Let $G_a$ and $G_b$ denote the connected components of $G$ with the edge $e$ removed. Then for any minimal fort, $F$, of $G$ where $a, b \notin F$, either $V(G_a) \cap F = \emptyset$ or $V(G_b) \cap F = \emptyset$.
\end{lemma}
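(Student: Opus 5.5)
We argue by contradiction and mimic the proof of Lemma \ref{lem:cutvertex}: assuming a minimal fort meets both sides of the cut-edge, we exhibit a proper subset that is still a fort.

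Suppose $F$ is a minimal fort of $G$ with $a,b\notin F$, and suppose both $F_a := V(G_a)\cap F$ and $F_b := V(G_b)\cap F$ are nonempty. Since $F = F_a \cup F_b$ (as $V(G)=V(G_a)\cup V(G_b)$), the set $F_a$ is a nonempty proper subset of $F$. We will show $F_a$ is a fort, contradicting minimality.

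The key observation is that $e$ is the only edge joining $V(G_a)$ and $V(G_b)$, and neither endpoint of $e$ lies in $F$. Take any vertex $u \notin F_a$ and compare $|N(u)\cap F_a|$ with $|N(u)\cap F|$.

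\begin{itemize}
\item If $u\in V(G_b)$, then its only possible neighbor in $V(G_a)$ is $a$ (when $u=b$), and $a\notin F$. Hence $|N(u)\cap F_a| = 0$, which is allowed.
\item If $u \in V(G_a)\setminus F$ and $u\neq a$, then $N(u)\subseteq V(G_a)$. Hence $N(u)\cap F_a = N(u)\cap F$, and since $F$ is a fort this count is $0$ or at least $2$.
\item If $u=a$, then $N(a)\cap F = (N(a)\cap F_a)\cup(\{b\}\cap F)$, and $b\notin F$. So again $N(a)\cap F_a = N(a)\cap F$, which has size $0$ or at least $2$.
\end{itemize}

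Thus every vertex outside $F_a$ has zero or at least two neighbors in $F_a$, so $F_a$ is a fort. This contradicts the minimality of $F$.

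The only delicate point is the endpoint $a$ (and symmetrically $b$), since it is the one vertex of $G_a$ with a neighbor outside $G_a$. The hypothesis $b\notin F$ is exactly what makes its neighbor count within $F_a$ agree with its count within $F$. No other step presents an obstacle.
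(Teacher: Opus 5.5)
Your proof is correct and follows the paper's approach: restrict $F$ to one side of the cut-edge and observe that the result is still a fort, contradicting minimality (the paper keeps $F\setminus V(G_a)$ rather than your $F\cap V(G_a)$, which is symmetric). Your explicit check of the endpoints $a$ and $b$ is more careful than the paper's one-line justification. That justification claims no vertex of $G_a$ is adjacent to a vertex of $G_b$, which overlooks the edge $\{a,b\}$ itself.
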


\begin{proof}
If $V(G_a) \cap F \ne \emptyset$ and $V(G_b) \cap F \ne \emptyset$, then since $\{a,b\}$ is a cut edge, no vertex in $V(G_a)$ is adjacent to a vertex in $V(G_b)$; hence, $F \setminus V(G_a)$ (i.e., coloring $G_a$) is still a fort of $G$ which violates the minimality of $F$.
\end{proof}

\begin{lemma} \label{lem:forestdecomposition}
    Let $G$ be a graph with connected components, $G_1, \ldots, G_c$. Then, $\mathcal{F}_G = \mathcal{F}_{G_1} + \cdots + \mathcal{F}_{G_c}$.
\end{lemma}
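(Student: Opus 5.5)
The plan is to show that a minimal fort of $G$ is exactly a minimal fort of some single component $G_i$. This gives a bijection between the minimal forts of $G$ and the disjoint union over $i$ of the minimal forts of the $G_i$. Since the components have disjoint vertex sets, the sets of minimal forts coming from different components are disjoint, and summing their sizes yields $\mathcal{F}_G = \mathcal{F}_{G_1} + \cdots + \mathcal{F}_{G_c}$.

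The first step is a locality observation. For a vertex $u \in V(G_i)$ we have $N_G(u) = N_{G_i}(u)$, so for any $F \subseteq V(G)$,
\[ |N_G(u) \cap F| = |N_{G_i}(u) \cap F| = |N_{G_i}(u) \cap (F \cap V(G_i))|. \]
Consequently $F$ is a fort of $G$ if and only if $F$ is nonempty and, for every $i$, each vertex of $V(G_i) \setminus F$ has zero or at least two neighbors in $F \cap V(G_i)$.

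From this, two facts follow directly:
\begin{enumerate}
\item[(a)] If $F$ is a fort of $G$ and $F \cap V(G_i) \neq \emptyset$, then $F \cap V(G_i)$ is a fort of $G_i$, and it is also a fort of $G$. As a fort of $G$ it satisfies the condition in $G_i$ by the display above, and it satisfies the condition vacuously in every other component, since those vertices have no neighbors in it.
\item[(b)] Conversely, any fort of $G_i$ is a fort of $G$.
\end{enumerate}

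Now suppose $F$ is a minimal fort of $G$. Choose a component $G_i$ that $F$ meets. By (a), $F \cap V(G_i)$ is a fort of $G$ contained in $F$, so minimality forces $F = F \cap V(G_i) \subseteq V(G_i)$. This $F$ is a fort of $G_i$. If some proper subset $F' \subsetneq F$ were a fort of $G_i$, then by (b) it would be a fort of $G$, contradicting the minimality of $F$. Hence $F$ is a minimal fort of $G_i$.

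In the other direction, let $F$ be a minimal fort of $G_i$. By (b) it is a fort of $G$. Any fort $F' \subseteq F$ of $G$ lies inside $V(G_i)$, so by (a) it is a fort of $G_i$, and minimality in $G_i$ gives $F' = F$. Thus $F$ is a minimal fort of $G$.

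There is no real obstacle in this argument. The only point requiring care is the vacuous verification in (a) for vertices outside $G_i$: such a vertex lies in $\overline F$ and has zero neighbors in $F \cap V(G_i)$, so it satisfies the fort condition.
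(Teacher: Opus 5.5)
Your proof is correct and follows the same approach as the paper: restrict a fort to a component $G_i$ that it meets, check that this restriction is still a fort of $G$, and conclude by minimality that every minimal fort lies in a single component. You also make explicit two things the paper's brief argument leaves implicit: the converse direction (a minimal fort of $G_i$ is minimal in $G$), and the check for vertices of $F$ lying outside $G_i$.
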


\begin{proof}
    Let $F$ be a fort on $G$. If $F \cap V(G_i)$ and $F \cap V(G_j)$ are nonempty for $i\ne j$, then consider $F \cap V(G_i)$. Any vertex not in $F$ but in $V(G_i)$ is still adjacent to 0 or 2 or more vertices in $F \cap V(G_i)$ and any vertex not in $F$ nor in $V(G_i)$ is necessarily adjacent to no vertices in $F \cap V(G_i)$. Hence $F \cap V(G_i)$ is a fort of $G$. It follows that any minimal fort of $G$ is a minimal fort of some $G_i$ and the result follows.
\end{proof}

In the context of trees, this means for a forest $R$ with connected components, $T_1, \ldots, T_c$ we have $\mathcal{F}_R = \mathcal{F}_{T_1} + \cdots + \mathcal{F}_{T_c}$.

The next lemma characterizes exactly which sets of a tree are minimal forts solely based on their graph-theoretical properties. As it turns out, a very similar characterization was also independently found by Cameron and Li (See Theorem 3.3 in \cite{cameron2025minimal}). We include this lemma for completeness. Our  characterization is slightly different and arguably more complicated; however, this presentation lends itself more naturally to the algorithms we present later as it carefully delineates all of the possibilities for extending potential minimum fort based on the inclusion and/or exclusion of two adjacent vertices.

\begin{lemma} \label{lem:lemmaforalgorithm}
Let $T$ be a tree on at least three vertices.
Choose $S \subseteq V(T)$. 
Then, $S$ is a minimal fort if and only if the following conditions hold:

\begin{enumerate} [label=(\Roman*)]

\item $S$ contains at least one leaf; and 
\label{lemalg_condA}

\item For any leaf $\ell \in S$, every edge $\{a, b\}$ where $a$ is closer to $\ell$ than $b$ (it may be that $a = \ell$ or $b$ is a leaf) satisfies the following: \label{lemalg_condB}

\begin{enumerate}[label=(\roman*)]
    \item If $a, b \notin S$, $N(b) \cap S = \emptyset$. \label{lemalg_cond1}
    \item If $a \notin S$ but $b \in S$, $|N(b) \cap S| \le 1$. \label{lemalg_cond2}
    \item If $a \in S$ but $b \notin S$, $|(N(b) \cap S) \setminus\{a\}| = 1$. \label{lemalg_cond3}
    \item If $a, b \in S$, $|(N(b) \cap S) \setminus\{a\}| = 0$. \label{lemalg_cond4}
\end{enumerate}
\end{enumerate}


\end{lemma}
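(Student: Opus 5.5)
We prove the two directions separately. The whole argument rests on rooting $T$ at a leaf $\ell \in S$ and reading conditions \ref{lemalg_cond1}--\ref{lemalg_cond4} as a local propagation rule along edges directed away from $\ell$. For an edge $\{a,b\}$ with $a$ the parent of $b$, the quantity $(N(b)\cap S)\setminus\{a\}$ is exactly the set of children of $b$ lying in $S$.

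\emph{Necessity.} Let $S$ be a minimal fort. For \ref{lemalg_condA}, suppose $S$ contains no leaf, and take $u \in S$ farthest from some fixed vertex, i.e.\ an extremal vertex of the subforest induced on $S$. A leaf adjacent to $u$ would lie outside $S$ and have exactly one neighbor in $S$, which is impossible. Instead I would pick a leaf $x$ of the subtree spanned by $S$, which lies in $S$. Since $x$ is not a leaf of $T$, there is a neighbor $y$ of $x$ leading away from the other $S$-vertices. The component beyond $y$ avoids $S$ by the choice of $x$, so $y\notin S$ and $y$ sees exactly one vertex of $S$, a contradiction. For \ref{lemalg_condB}, we fix a leaf $\ell\in S$ and an edge $(a,b)$, and use the identity $|N(b)\cap S| = [a\in S] + \#\{\text{children of } b \text{ in } S\}$.
\begin{itemize}
\item[(i)] If $a,b\notin S$ and $b$ had a child in $S$, then Lemma \ref{lem:cutedge} applies to the cut-edge $\{a,b\}$. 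The side containing $\ell$ and the side containing that child both meet $S$, a contradiction.
\item[(ii)] If $b\in S$, Corollary \ref{cor:treenot3neighbors} applied at $u=b$ gives $|(N(b)\cup\{b\})\cap S|\le 2$, hence $|N(b)\cap S|\le 1$.
\item[(iii)] If $a\in S$ and $b\notin S$, the fort property forces $|N(b)\cap S|\ge 2$, and Corollary \ref{cor:treenot3neighbors} forces it to be at most $2$. So $b$ has exactly one child in $S$.
\item[(iv)] If $a,b\in S$, Corollary \ref{lem:three_uncolored_in_a_row} (or Corollary \ref{cor:treenot3neighbors}) forbids any child of $b$ in $S$.
\end{itemize}

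\emph{Sufficiency.} Assume \ref{lemalg_condA} and \ref{lemalg_condB}, and fix a leaf $\ell\in S$ rooting $T$. We first show $S$ is a fort. Take $b\notin S$ with $b\ne\ell$ and parent $a$. If $a\notin S$, rule (i) gives $N(b)\cap S=\emptyset$. If $a\in S$, rule (iii) gives exactly two neighbors in $S$. Either way the fort condition holds at $b$.

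For minimality, the proposition preceding the lemma suggests the right criterion: it suffices to show that coloring $\overline S\cup\{f\}$ forces all of $T$ for every $f\in S$. Any fort $F'\subsetneq S$ would then be impossible, since taking $f\in S\setminus F'$, the set $\overline{F'}\supseteq \overline S\cup\{f\}$ would be a failed forcing set that nonetheless forces everything.

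The rules show that $S$ is rigidly determined along the rooted tree. Each $S$-vertex has at most one $S$-neighbor in total, by (ii) together with (iv). Each non-$S$ vertex with an $S$-parent has exactly one $S$-child. Non-$S$ vertices with non-$S$ parents have no $S$-neighbors at all. So the vertices of $S$ together with the non-$S$ vertices adjacent to two of them form a single connected ``chain tree'' $C$ containing $\ell$; connectivity follows by induction outward from $\ell$. Coloring any $f\in S$ triggers forces that propagate through $C$. A colored non-$S$ vertex $b$ with one of its two $S$-neighbors colored forces the other. An $S$-vertex whose sole $S$-neighbor lies across an edge forces along it once all its other neighbors, which lie in $\overline S$, are colored. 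Everything outside $C$ is already colored.

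\emph{Main obstacle.} The hardest step will be making this propagation argument clean. We must check that in $C$, colored vertices always have exactly one uncolored neighbor at the moment they act, including at the root leaf $\ell$. We must also handle propagation both toward and away from $\ell$, since $f$ may lie deep in the tree. I would do this by induction on the distance in $C$ from $f$, using that every uncolored vertex of $C$ has degree at most $2$ within the uncolored set. The small-tree hypothesis ($|V(T)|\ge 3$) is used only to guarantee that $\ell$'s neighbor is not a leaf in degenerate cases.
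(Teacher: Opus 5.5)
Your proof is correct, and its skeleton matches the paper's. Necessity of (i)--(iv) comes from Lemma~\ref{lem:cutedge} and Corollaries~\ref{cor:treenot3neighbors} and~\ref{lem:three_uncolored_in_a_row}. Sufficiency checks the fort property and then shows that $\overline S$ plus any one vertex of $S$ forces all of $T$, via forces between $S$-vertices at distance $1$ and $2$ and a connectivity argument. You differ in organization, mostly to your advantage:

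\begin{itemize}
\item For (I), the paper only cites \cite{cameron2023forts}. Your leaf-of-the-spanning-subtree argument is self-contained, and in fact shows that every fort of a tree contains a leaf.
\item For the fort property, the paper starts from an offending pair $b\notin S$, $a\in S$ and needs Lemma~\ref{lem:twoleafpath} to find a leaf of $S$ from which $a$ is closer than $b$. You root once at $\ell$ and sort each $b\notin S$ by whether its parent lies in $S$, which makes that lemma unnecessary.
\item For minimality, the paper builds the forcing graph $H$ and rules out disconnection by a closest-pair argument. You first extract the global structure: each $S$-vertex has at most one $S$-neighbor, and each non-$S$ vertex has $0$ or exactly $2$ $S$-neighbors, namely its parent and one child. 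You then get connectivity by induction from the root. This also makes explicit a case the paper's distance-$2$ step leaves implicit: a vertex outside $S$ cannot have two $S$-children.
\item Your sufficiency argument uses (II) for a single leaf only. Combined with necessity, it therefore yields Lemma~\ref{lem:bothways} for free.
\end{itemize}

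Two tidy-ups. First, delete the false start at the beginning of your argument for (I). Second, in your last paragraph, the count that matters is that every vertex of $S$ has at most \emph{one} neighbor in $S$, not ``degree at most $2$''. Together with the $0$-or-$2$ count outside $S$, this guarantees that each vertex you let act has exactly one uncolored neighbor.
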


Before we prove Lemma \ref{lem:lemmaforalgorithm}, we need the following lemma and definition.

\begin{lemma} \label{lem:twoleafpath}
Any set $S \subseteq V(T)$ that meets conditions \ref{lemalg_condA} and \ref{lemalg_condB} has the property that any vertex $v \in S$ lies on a path between two leaves $\ell, \ell' \in S$.
\end{lemma}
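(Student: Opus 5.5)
Fix $v \in S$. By condition \ref{lemalg_condA} there is a leaf $\ell \in S$. If $v = \ell$, we still need a second leaf, so we handle this within the general argument. The plan is to root the tree at $\ell$ and use condition \ref{lemalg_condB} for this $\ell$: every edge $\{a,b\}$ with $a$ the parent of $b$ (closer to $\ell$) obeys rules \ref{lemalg_cond1}--\ref{lemalg_cond4}. The key observation is that these rules propagate $S$ downward. If $a \in S$ and $b \notin S$, then by \ref{lemalg_cond3} $b$ has exactly one neighbor in $S$ besides $a$, which must be a child of $b$. If $a \in S$ and $b \in S$, then \ref{lemalg_cond4} only says $b$ has no other $S$-neighbors. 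So to go from $v$ down to a leaf we need a slightly different step, described next.

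To produce the two leaves, we construct two descending paths and one ascending path. First, from $v$ we walk downward, away from $\ell$, and show the walk can always continue until it reaches a leaf of $T$ lying in $S$. Concretely, we define a ``pair'' $(a,b)$ with $a$ the parent of $b$ and at least one of $a,b$ in $S$. We then show that, unless $b$ is a leaf in $S$, there is a child $c$ of $b$ such that $(b,c)$ is again such a pair.

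Cases:
\begin{itemize}
\item If $a \in S$ and $b \notin S$, then \ref{lemalg_cond3} gives a child $c \in S$.
\item If $b \in S$ and $b$ is not a leaf, pick any child $c$. If $c \in S$ we continue. If $c \notin S$, the edge $(b,c)$ falls under \ref{lemalg_cond3} and we continue.
\item If $b$ is a leaf and $b \in S$, we stop.
\item A leaf $b \notin S$ with $a \in S$ is impossible, since \ref{lemalg_cond3} would demand a second $S$-neighbor of $b$.
\end{itemize}

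Since $T$ is finite, the walk terminates at a leaf $\ell' \in S$. Starting this walk at the edge from the parent of $v$ to $v$ (the case $v \in S$) gives a leaf $\ell'$ below $v$. The exception is $v = \ell$: then we take any neighbor $c$ of $\ell$ and start at $(\ell, c)$, which is a valid pair because $\ell \in S$. Here $|V(T)| \ge 3$ guarantees that the walk goes beyond a single edge only when needed. A further case is that $\ell$'s neighbor is itself a leaf, i.e.\ $T = P_2$, which is excluded.

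The second leaf comes from the path from $v$ to $\ell$, which connects $v$ to a leaf in $S$. Hence $v$ lies on the path between $\ell'$ (below $v$) and $\ell$ (above $v$). This works because the tree path $\ell \to v \to \ell'$ is a simple path: $\ell'$ is a descendant of $v$ when rooted at $\ell$. When $v = \ell$, the path from $\ell$ to $\ell'$ trivially contains $v$, and $\ell' \neq \ell$ since the walk moves strictly away from $\ell$.

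The main obstacle I expect is the careful case check that the downward walk never gets stuck. The delicate cases are when $b \in S$ has a child outside $S$, and the leaf case under \ref{lemalg_cond3}. Along with this, one must confirm that rule \ref{lemalg_cond3} applies to every edge oriented away from $\ell$, including edges far from $\ell$, which the statement of condition \ref{lemalg_condB} grants.
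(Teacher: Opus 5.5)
Your proof is correct and is essentially the paper's argument. Both root at a leaf $\ell \in S$ and use condition (iii), together with the fact that a non-leaf vertex of $S$ has a child, to move from any vertex of $S$ further away from $\ell$ to another vertex of $S$, until a second leaf of $S$ below $v$ is reached. The paper phrases this as a contradiction with a counterexample $p$ chosen furthest from $\ell$, whereas you run the descent explicitly; this also makes the $v=\ell$ case and the inference the paper leaves implicit (why $p'\notin S$, and why finding $p''$ contradicts the choice of $p$) fully transparent.
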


\begin{proof}

 By condition \ref{lemalg_condA}, there is at least one leaf $\ell \in S$. Suppose for a contradiction there is a vertex $p \in S$ such that it is not on a path between two leaves in $S$. Let $p$ be the furthest such vertex away from $\ell$. Note that necessarily $p$ cannot be a leaf. So $p$ must have another neighbor $p'$ that is further from $\ell$ than $p$. However, $p' \not \in S$, so applying condition \ref{lemalg_cond3} requires that $p'$ has a neighbor $p''$ in $S$. Necessarily, $p''$ is further from $\ell$ than $p$, a contradiction.
\end{proof}

\begin{definition}
Let $T$ be a tree and let $S \subseteq V(T)$ be a subset of $V(T)$. The \emph{forcing graph} of $S$ is the directed graph $H=(S,E)$ where $s_1 \to s_2$ if upon coloring $s_1$ in addition to $\overline S$, $s_2$ is forced (not necessarily by $s_1$) in the first simultaneous application of the color-change rule.
\end{definition}

\begin{proof}[Proof of Lemma \ref{lem:lemmaforalgorithm}]
($\Leftarrow$) Let $S$ be a set of vertices that satisfies the hypotheses. 

We will first show that $S$ is a fort. Suppose $S$ is not a fort, so there is a vertex $b \notin S$ with $b$ adjacent to exactly one neighbor in $S$, $a$. 
By Lemma \ref{lem:twoleafpath}, there are two leaves $\ell, \ell' \in S$ whose path between them contains $a$. Necessarily, choose $\ell$ to be closer to $a$ than $b$. Hence,
condition \ref{lemalg_cond3} applied to $\{a,b\}$ mandates that $b$ has another neighbor in $S$, a contradiction. 

To show that $S$ is minimal, we will show that coloring $\overline S \cup \{s\}$ for any $s \in S$ will color the entire tree. 

Let $H$ be the forcing graph of $S$. It suffices to prove $H$ is strongly connected, as then coloring any one additional vertex in $S$ (in addition to $\overline S$) will force the entire graph by the definition of $H$. 

Suppose $x, y \in S$ with $x \sim y$ and $x$ closer to $\ell$ than $y$. Then, condition \ref{lemalg_cond4} requires that $N(y) \cap S \setminus \{x\}  = \emptyset$. Therefore, if $y$ is colored in addition to $\overline S$, $y$ will force $x$. 
Hence, $y \to x$. 

If $x$ is not a leaf in $T$, then there is a vertex closer to $\ell$ from $x$, $w$. It must be the case that $w \notin S$, otherwise condition \ref{lemalg_cond4} is violated when $a= w$, $b = x$ as $y \in N(x) \cap S \setminus \{w\}$. But since $w \notin S$, we can apply condition \ref{lemalg_cond2} and conclude that $y$ is the only element of $N(x) \cap S$. Therefore if $\overline S \cup \{x\}$ is colored, we have that $x$ will force $y$, so $x \to y$ in $H$. If $x$ is a leaf in $T$, then coloring $x$ will necessarily force $y$, so $x \to y$.

If $x, z \in S$ that are distance 2 in $T$, then condition \ref{lemalg_cond4} requires that their mutual neighbor, $y$ has $y \notin S$. Since $x \in S$ and $y \not \in S$, condition \ref{lemalg_cond3} guarantees that there is at most one other neighbor $y$ that is in $S$ which must be $z$. Hence, coloring $x$ will allow $y$ to force $z$ and we have $x \to z$ in $H$. By same argument in reverse, we also have that $z \to x$ in $H$. Hence any vertices of distance 2 in $T$ are mutually adjacent in $H$.

If $H$ is disconnected, then there are two vertices $x', y'$ in different connected components of $H$ that have minimum distance in $T$. Necessarily, that minimum distance must be 3 (otherwise they would be connected in $H$). This means there are vertices $a, b \notin S$ that are on the path between $x'$ and $y'$. However, this violates condition \ref{lemalg_cond1}.

It follows that $H$ is connected and therefore, coloring any one additional vertex in $S$ in addition to all of $\overline S$ will force all of $T$. Hence, $S$ is a minimal fort.

($\Rightarrow$) Suppose $S$ is a minimal fort. Necessarily, $S$ must contain a leaf (see \cite{cameron2023forts}, Lemma 4.13), so it obeys condition \ref{lemalg_condA}.

For the remaining conditions, we note the following.
Corollary \ref{lem:three_uncolored_in_a_row} requires that a minimal fort satisfies conditions \ref{lemalg_cond2} and \ref{lemalg_cond4}. Corollary \ref{cor:treenot3neighbors} requires that a minimal fort satisfy condition \ref{lemalg_cond3} (noting that $b$ already has a neighbor in $S$, so it must have another). Lemma \ref{lem:cutedge} requires that a minimal fort satisfy condition \ref{lemalg_cond1} (note that there must be $\ell\in V(G_t), t\in\{a,b\}$ such that $V(G_t) \ne \emptyset$).
\end{proof}

\begin{lemma} \label{lem:bothways}
Suppose a set $S$ satisfies condition \ref{lemalg_condB} for a leaf $\ell = \ell' \in S$, then $S$ also satisfies condition \ref{lemalg_condB} for any other leaf with $\ell = \ell'' \in S$.
\end{lemma}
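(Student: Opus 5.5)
Since the conditions in \ref{lemalg_condB} are stated relative to an orientation of each edge (which endpoint is closer to the chosen leaf), the plan is to fix an edge and ask how its orientation changes when we replace $\ell'$ by $\ell''$. Let $P$ be the path in $T$ from $\ell'$ to $\ell''$. For an edge $\{a,b\}$ not on $P$, the endpoint closer to $\ell'$ is also the endpoint closer to $\ell''$: removing the edge splits $T$ into two components, and $\ell'$ and $\ell''$ lie in the same one because $P$ avoids the edge. So conditions \ref{lemalg_cond1}--\ref{lemalg_cond4} for such edges are literally the same statements for both leaves, and nothing needs proving. The whole content is on the edges of $P$, whose orientation is reversed: we know \ref{lemalg_condB} for every edge $\{a,b\}$ of $P$ with $a$ closer to $\ell'$, and must check it with the roles of $a$ and $b$ swapped.

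The key preliminary is a structural description of $S\cap V(P)$ from the hypotheses for $\ell'$. Write $P=v_0v_1\cdots v_k$ with $v_0=\ell'$ and $v_k=\ell''$, both in $S$. First, two consecutive non-members $v_i,v_{i+1}\notin S$ cannot occur. Indeed, some later vertex of $P$ (ultimately $v_k$) lies in $S$; take the first one after $v_{i+1}$. Applying \ref{lemalg_cond1} along the consecutive non-member edges leading up to it forces a contradiction. Second, by \ref{lemalg_cond4}, two consecutive members $v_i,v_{i+1}\in S$ force $v_{i+2}\notin S$. By \ref{lemalg_cond2} and \ref{lemalg_cond4}, $v_{i+1}$ also has no member neighbor other than $v_i$. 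Third, if $v_i\in S$ and $v_{i+1}\notin S$, then \ref{lemalg_cond3} says $v_{i+1}$ has exactly one member neighbor besides $v_i$, and I expect to show it is $v_{i+2}$. If it were off $P$, then $v_{i+2}\notin S$, and the edge $\{v_{i+1},v_{i+2}\}$ together with \ref{lemalg_cond1} would require $N(v_{i+2})\cap S=\emptyset$. Iterating toward the next member of $P$ would then contradict the first observation.

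With this description, the plan is to verify the four reversed conditions edge by edge on $P$, in each of the four membership patterns of $\{v_i,v_{i+1}\}$. For instance, if $v_i,v_{i+1}\in S$, the reversed \ref{lemalg_cond4} needs $N(v_i)\cap S\setminus\{v_{i+1}\}=\emptyset$. This follows because $v_{i-1}\notin S$ (if $i>0$) by \ref{lemalg_cond4} applied to $\{v_{i-1},v_i\}$, and the off-path neighbors of $v_i$ are excluded by \ref{lemalg_cond2} applied to $\{v_{i-1},v_i\}$. If $i=0$, $v_0$ is a leaf and the condition is immediate. The mixed cases \ref{lemalg_cond2} and \ref{lemalg_cond3} in reverse are handled the same way, by counting $N(v_i)\cap S$ using the forward conditions on the edge $\{v_{i-1},v_i\}$, together with the fact that off-path neighbors of a $P$-vertex are already constrained by forward conditions whose orientation agrees with the reversed one. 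The case \ref{lemalg_cond1} is vacuous by the first observation.

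I expect the main obstacle to be the careful bookkeeping at the endpoints and the mixed cases, namely showing that the unique extra member neighbor in \ref{lemalg_cond3} lies on $P$ and that the counts $|N(\cdot)\cap S|$ transfer correctly. My fallback would be to invoke the ($\Leftarrow$)-style reasoning: show that $S$ satisfying \ref{lemalg_condB} for $\ell'$ is already a minimal fort by the argument of Lemma~\ref{lem:lemmaforalgorithm}, which used only one base leaf through Lemma~\ref{lem:twoleafpath}. Then the ($\Rightarrow$) direction gives \ref{lemalg_condB} for every leaf, in particular $\ell''$. This is a cleaner route, provided one checks that the ($\Leftarrow$) proof there genuinely uses only a single leaf; I would adopt it if that check goes through.
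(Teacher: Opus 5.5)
Your main argument is correct and is essentially the paper's proof. Edges off the $\ell'$--$\ell''$ path keep their orientation, and each reversed edge $\{v_{i+1},v_i\}$ on the path is checked using the forward condition on $\{v_{i-1},v_i\}$, which is the paper's case analysis on the triple $a\sim b\sim a'$: your first observation is its Case~1, and your $i=0$ case is its leaf-endpoint case. Stick with this direct argument rather than the fallback, because the written ($\Leftarrow$) proof of Lemma~\ref{lem:lemmaforalgorithm} does not use a single base leaf (its fort step chooses $\ell$ depending on the edge $\{a,b\}$), so that route would mean re-proving that direction from one leaf with reorientation arguments of the same kind.
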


\begin{proof}
Let $S$ be a set that satisfies condition \ref{lemalg_condB} when choosing $\ell = \ell' \in S$. Let $\ell'' \in S$ be another leaf in $S$. Consider an edge $\{a,b\}$ in $T$ with $a$ closer to $\ell'$ than $b$. Observe that $a$ will still be closer to $\ell''$ unless $\{a,b\}$ in on the shortest path between $\ell'$ and $\ell''$ , $P$. Hence condition \ref{lemalg_condB} is satisfied for $\ell = \ell''$ for any edge not on $P$. It remains to show the condition holds for any edge $\{a', b'\}$ on the shortest path $P$ with $a'$ closest to $\ell''$.

If $P$ only has one edge then $T$ is the graph on a single edge and condition $\ref{lemalg_condB}$ holds.

Next, consider the edge $\{a' , b'\}$ when $b' = \ell'$ and $a'$ is the sole neighbor of $b'$ (and hence $a'$ is closer to $\ell''$ than $b'$). In this  case, if $a' \not\in S$, condition \ref{lemalg_cond2} for $\ell = \ell''$ with the edge $\{a',b'\}$ applies as $b'$ has at most one neighbor $a'$. If $a' \in S$, condition \ref{lemalg_cond4} for $\ell = \ell''$ with the edge $\{a',b'\}$ applies as $b'$ has no neighbors other than $a'$.

We can now assume that $P$ has at least three vertices and that $b' \ne \ell'$.
Choose any three adjacent vertices $a \sim b (=b') \sim a'$ on $P$ with $a$ closest to $\ell'$ (it may be the case that $a' = \ell''$). We will show that for any combination of whether $a , b (=b')$ and $a'$ are in $S$ or not in $S$, $\ell'$ satisfying condition \ref{lemalg_condB} implies that $\ell''$ satisfies the conditions with respect to the edge $\{a', b'\}$.

{\bf Case 1}: $a, b \not\in S$ or $b', a' \not\in S$:

If $a, b \not\in S$ or $b', a' \not\in S$, then applying condition \ref{lemalg_cond1} with $\ell = \ell'$ inductively, we have $\ell'' \not\in S$, a contradiction.

{\bf Case 2}: $a, a' \not\in S, b \in S$:

Applying condition \ref{lemalg_cond2}, there is at most one additional neighbor of $b$ (that is not $a'$) that is in $S$. Hence, with $a' \not\in S$ and $b' \in S$, the edge $\{a', b'\}$ satisfies condition \ref{lemalg_cond2} for $\ell=\ell''$.

{\bf Case 3}: $a \not\in S, b, a' \in S$:

Applying condition \ref{lemalg_cond2}, there are  no additional neighbors of $b$ that are in $S$. Hence, with $a', b' \in S$, the edge $\{a', b'\}$ satisfies condition \ref{lemalg_cond4} for $\ell=\ell''$.

{\bf Case 4}: $a, a' \in S, b \not\in S$:

Applying condition \ref{lemalg_cond3}, there are no  additional neighbors of $b$ that are in $S$. Hence, with $a' \in S$ and $b' \not\in S$, the edge $\{a', b'\}$ satisfies condition \ref{lemalg_cond3} for $\ell=\ell''$.

{\bf Case 5}: $a, b \in S, a' \not\in S$:

Applying condition \ref{lemalg_cond4}, there are no  additional neighbors of $b$ that are in $S$. Hence, with $a' \notin S$ and $b' \in S$, the edge $\{a', b'\}$ satisfies condition \ref{lemalg_cond2} for $\ell=\ell''$.

{\bf Case 6}: $a, b, a' \in S$: 

Applying condition \ref{lemalg_cond4} to the edge \{a, b\} for $\ell = \ell'$ requires that $a' \not\in S$, so this case cannot occur.

\end{proof}

\begin{cor} \label{cor:condition_2_with_ending_leaf}
All sets that satisfy condition \ref{lemalg_condB} must have all the leaves whose only neighbor is uncolored, be uncolored.
\end{cor}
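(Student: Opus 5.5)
The plan is to take a set $S$ satisfying condition \ref{lemalg_condB}, a leaf $\lambda$ of $T$, and let $u$ be the unique neighbor of $\lambda$. Assuming $u \notin S$, the goal is to show $\lambda \notin S$. Argue by contradiction: suppose $\lambda \in S$. Then $\lambda$ is a leaf in $S$, so condition \ref{lemalg_condB} applies with $\ell = \lambda$. This is legitimate: if $S$ satisfies condition \ref{lemalg_condB} for some leaf in $S$, then by Lemma \ref{lem:bothways} it satisfies it for every leaf in $S$, in particular for $\lambda$.

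Now look at the edge $\{\lambda, u\}$. With $\ell = \lambda$, take $a = \lambda$ (closer to $\ell$) and $b = u$. Since $a \in S$ and $b \notin S$, condition \ref{lemalg_cond3} requires $|(N(u)\cap S)\setminus\{\lambda\}| = 1$. So $u$ has some neighbor $w \neq \lambda$ with $w \in S$. This alone is not a contradiction, because $u$ may have other neighbors in $S$. So the single-edge check does not finish the argument, and I need to recheck the intended statement.

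A closer reading suggests the corollary should be the case ``whose only neighbor is uncolored'' with respect to the \emph{fort}: every vertex of $\overline S$ is colored. So ``uncolored'' means in $S$. In this reading the claim is that if a leaf's neighbor lies in $S$, then the leaf also lies in $S$. I will prove this version. Let $\lambda$ be a leaf with neighbor $u \in S$, and suppose $\lambda \notin S$. By Lemma \ref{lem:twoleafpath}, $u$ lies on a path between two leaves $\ell,\ell' \in S$. Neither of these is $\lambda$, so $u$ is an interior vertex of that path. Choose $\ell$ as the reference leaf, and let $a$ be the neighbor of $u$ toward $\ell$.

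\textbf{Case $a \in S$.} Condition \ref{lemalg_cond4} on the edge $\{a,u\}$ gives $(N(u)\cap S)\setminus\{a\} = \emptyset$. But the path continues from $u$ toward $\ell'$, so the next vertex is a neighbor of $u$ other than $a$; call it $c$. If $c\in S$, this contradicts the condition directly. If $c \notin S$, apply the conditions along the path toward $\ell'$ and reach a contradiction as in Case 1 of Lemma \ref{lem:bothways}.

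\textbf{Case $a \notin S$.} Condition \ref{lemalg_cond2} gives $|N(u)\cap S|\le 1$. Now examine the edge $\{u,\lambda\}$, where $u$ is closer to $\ell$ than $\lambda$. Since $u \in S$ and $\lambda \notin S$, condition \ref{lemalg_cond3} demands $|(N(\lambda)\cap S)\setminus\{u\}| = 1$. But $N(\lambda) = \{u\}$, so this set is empty, a contradiction.

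The same edge argument settles the first case as well. In either case, the edge $\{u,\lambda\}$ with $u\in S$, $\lambda\notin S$ and $u$ closer to $\ell$ violates condition \ref{lemalg_cond3}, since the leaf $\lambda$ has no neighbor other than $u$. So the proof reduces to one line: fix any leaf $\ell \in S$ (one exists by the hypothesis that $S$ contains a leaf). Then $u$ is closer to $\ell$ than $\lambda$, because $\lambda$ is a leaf different from $\ell$ (as $\lambda\notin S$). Applying condition \ref{lemalg_cond3} to $\{u,\lambda\}$ gives the contradiction.

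The main obstacle is interpretive rather than technical. One has to pin down that ``uncolored'' means membership in $S$, and handle the degenerate case $T = P_2$ separately. Once the statement is read correctly, the orientation of the edge toward $\lambda$ is automatic, and the corollary follows from condition \ref{lemalg_cond3} alone.
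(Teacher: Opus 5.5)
Your final one-line argument is correct and is essentially the paper's proof. You fix a leaf $\ell\in S$, note that $u$ is closer to $\ell$ than $\lambda$, and apply condition~\ref{lemalg_cond3} to the edge $\{u,\lambda\}$, which fails because $N(\lambda)\setminus\{u\}=\emptyset$. The only difference is that the paper gets the reference leaf through Lemma~\ref{lem:twoleafpath}, while you take it directly from condition~\ref{lemalg_condA}. Your version is cleaner and shows that condition~\ref{lemalg_condA} is genuinely needed: with condition~\ref{lemalg_condB} alone, $S=\{u\}$ for an internal vertex $u$ with a leaf neighbor satisfies the hypothesis vacuously yet violates the conclusion.

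The opening misreading, the case split on $a$, and the $P_2$ caveat are all superseded by that argument and should be dropped. In particular, the $c\notin S$ branch does not follow from Case~1 of Lemma~\ref{lem:bothways}, since that case needs two consecutive vertices outside $S$.
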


\begin{proof}
We assume that there is a colored leaf $\ell$ whose only neighbor $v$ is uncolored. Then Lemma \ref{lem:twoleafpath} says that there must exist another leaf $\ell'$ such that $v$ lies on the path between $\ell$ and $\ell'$. With the leaf $\ell'$ and $a=v,b=\ell$, the condition \ref{lemalg_cond3} says that there must be $|(N(b) \cap S) \setminus\{a\}| = 1$. But because $b=\ell$ is a leaf, we have $|(N(b) \cap S) \setminus\{a\}| = 0$, a contradiction.
\end{proof}

\begin{lemma} \label{lem:three_vertex_branch}
Let $G$ be a graph. Let $v, w, x$ be vertices of $G$ such that $v$ is only adjacent to $w$ and $w$ is only adjacent to $v$ and $x$. If $F$ is a minimal fort of $G$ such that $v, w \in F$, then $F \setminus \{v, w, x\}$ is a minimal fort of $G - \{v, w, x\}$.
\end{lemma}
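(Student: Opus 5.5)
The plan is to first show that $x \notin F$, so that $F \setminus \{v,w,x\} = F \setminus \{v,w\}$. Then I would check the three requirements for a minimal fort of $G' := G - \{v,w,x\}$ in turn: nonemptiness, the fort property, and minimality. Throughout, I would use only the definition of a fort and the minimality of $F$ in $G$. No tree structure is needed, which matches the statement being for a general graph $G$.

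\textbf{Step 1: $x \notin F$.} Suppose $x \in F$. I would show that $F \setminus \{w\}$ is then a fort of $G$. It is nonempty since it contains $v$. Only vertices adjacent to $w$ can change their count of neighbors in the set, and these are $v$ and $x$. Both remain in $F\setminus\{w\}$, so every vertex outside $F \setminus \{w\}$ other than $w$ has the same neighbors in the set as before. The vertex $w$ itself has exactly the two neighbors $v, x$ in $F\setminus\{w\}$. Hence $F\setminus\{w\}$ is a fort and a proper subset of $F$, contradicting minimality. Equivalently: coloring $w$ together with $\overline F$ triggers no force, because $w$ has two uncolored neighbors and its only neighbors are uncolored.

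\textbf{Step 2: $F' := F\setminus\{v,w\}$ is a fort of $G'$.} Since $x \notin F$ and $x \sim w \in F$, the fort condition forces $x$ to have a second neighbor in $F$. That neighbor lies in $V(G')$, so $F' \neq \emptyset$. For any $u \in V(G') \setminus F'$, $u$ is not adjacent to $v$ or $w$, and the deleted vertex $x$ is not in $F$. Therefore $N_{G'}(u) \cap F' = N_G(u) \cap F$, which has size $0$ or at least $2$.

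\textbf{Step 3: minimality, the main step.} Suppose $F_0 \subsetneq F'$ is a fort of $G'$. I would lift $F_0$ back to a proper sub-fort of $F$ in $G$, splitting on $x$.
\begin{itemize}
\item If $x$ has no neighbor in $F_0$, then $F_0$ itself is a fort of $G$. The vertices $v, w, x$ have $0$ neighbors in $F_0$, and every other outside vertex has the same neighbors in $F_0$ as in $G'$.
\item Otherwise, $F_0 \cup \{v,w\}$ is a fort of $G$. Here $x$ sees $w$ together with at least one vertex of $F_0$, so it has at least $2$ neighbors in the set, and the other outside vertices are unaffected.
\end{itemize}
In either case we obtain a fort properly contained in $F$, contradicting the minimality of $F$. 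Hence $F'$ is a minimal fort of $G'$. The only delicate point is this case split on whether $x$ sees $F_0$, because the vertex $x$ is exactly where the counts in $G$ and $G'$ can differ.
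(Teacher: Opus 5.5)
Your proposal is correct and matches the paper's proof essentially step for step. You rule out $x \in F$ because $F \setminus \{w\}$ would be a smaller fort, note that deleting $\{v,w,x\}$ leaves the relevant neighbor counts unchanged, and lift a smaller fort $F_0$ of $G-\{v,w,x\}$ to $F_0$ or to $F_0 \cup \{v,w\}$ depending on whether $x$ has a neighbor in $F_0$. Your explicit check that $F \setminus \{v,w\}$ is nonempty, via the second $F$-neighbor of $x$, fills in a small detail the paper leaves implicit.
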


\begin{proof}
Let $F$ be a maximal fort of $G$ with $v, w \in F$. Notice that it must be the case that $x \not \in F$ as otherwise $F \setminus \{w\}$ is a smaller fort. Hence, after deleting $\{v, w, x\}$, any vertex $u \not\in F \setminus \{v, w, x\}$ still has the same number of neighbors in $F$. Therefore, $F \setminus \{v, w, x\}$ is a fort of $G - \{v, w, x\}$. It remains to show that $F \setminus \{v, w, x\}$ is a minimal fort of $G - \{v, w, x\}$.

Suppose there is a smaller fort of $G - \{v, w, x\}$, $F' \subseteq F \setminus \{v, w, x\}$. Then either $x$ is adjacent to a vertex in $F'$ or not. If so, $F' \cup \{v, w\}$ is a fort of $G$ (as then $x$ has two neighbors in $F' \cup \{v, w\}$). If not, $F'$ is a fort of $G$. Either way, this violates the minimality of $F$.
\end{proof}

\begin{lemma} \label{lemma:path_branch_have_no_neighbor}
Let $T$ be a tree. If $T$ contains a path branch that has no neighbor then $T$ is a path. 
\end{lemma}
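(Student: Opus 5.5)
Let $B$ be a path branch of $T$ that contains the leaf $v$ and has no neighbor. By definition, $B$ is the maximal connected induced subgraph containing $v$ whose vertices all have degree one or two in $T$. We show that $B = T$; since $B$ is a connected graph with all degrees at most two that contains a leaf, it is then a path.

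\textbf{Step 1: $B$ is a path.} The subgraph $B$ is a connected subgraph of a tree, hence a tree. Every vertex of $B$ has degree at most two in $T$, so also in $B$. A tree with maximum degree at most two is a path.

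\textbf{Step 2: $V(B) = V(T)$.} Suppose instead that some vertex $u \in V(T) \setminus V(B)$ exists. Since $T$ is connected, the path in $T$ from $v$ to $u$ has a first vertex $w \notin V(B)$. The vertex $w$ is adjacent to its predecessor on this path, which lies in $V(B)$. Hence $w$ is a vertex outside the branch with a neighbor in the branch.

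It remains to confirm that $w$ would be a neighbor of the branch in the sense of the definition, which asks for the unique such vertex. The claim is that at most one vertex outside $B$ can be adjacent to $B$. If two distinct vertices $w, w' \notin V(B)$ were adjacent to $B$, consider their attachment points in $B$.
\begin{itemize}
\item If both attach at the same vertex $x \in V(B)$, then $x$ has neighbors $w$ and $w'$ outside $B$. Also $x$ has a neighbor inside $B$ unless $B = \{x\}$, in which case $x = v$ is a leaf with only one neighbor. Either way $\deg_T(x) \ge 3$ or we contradict $x = v$ being a leaf, and $\deg_T(x) \ge 3$ is impossible since $x \in V(B)$.
\item If they attach at different vertices of the path $B$, then $T$ contains the path $w, \dots, w'$ through $B$. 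The degree and leaf constraints force the attachment points to be the endpoint of $B$ other than $v$ and $v$ itself. But $v$ is a leaf whose only neighbor lies in $B$ (or, if $B = \{v\}$, is $w$). This again gives a contradiction.
\end{itemize}

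This uniqueness argument is the only slightly fiddly point. The existence of $w$ alone contradicts the hypothesis that $B$ has no neighbor, so $V(B) = V(T)$.

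\textbf{Step 3: conclusion.} Since $B$ is an induced subgraph with $V(B) = V(T)$, we have $B = T$. By Step 1, $T$ is a path.
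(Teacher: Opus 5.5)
Your proof is correct and follows the paper's argument: since $T$ is connected, a branch that is not all of $T$ has a vertex outside it adjacent to it. So a branch with no neighbor must be $T$ itself, and $T$, being connected with all degrees at most two, is a path. Your check that the outside adjacent vertex is unique (the definition of the neighbor presupposes this) and your explicit argument that the branch is a path are sound details the paper leaves implicit.
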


\begin{proof}
By definition a path branch is a maximal connected induced subgraph of $T$ consisting of only vertices of degree $1$ or $2$ in $T$. Since $T$ is connected, any path branch that is not the entire graph itself must contain a neighbor. Hence, the only possible way to be a neighbor is if the path branch is the entire graph itself. In which case, it must be a path graph.
\end{proof}

\begin{lemma} \label{lem:elim_a_path_branch}
Let $T$ be a tree that is not a path, $\ell$ be a leaf of tree $T$, $P_\ell$ be the path branch in tree $T$ that contains the leaf $\ell$, $v$ be the neighbor of path branch $P_\ell$ and $X$ be an arbitrary set of vertices satisfying $\ell\notin X$. Let $\mathcal{F}_{H,-A}$ denote the number of minimal forts of the graph $H$ which avoid the set $A$. Then, the number of minimal forts of $T$ which avoid $X$ can be determined recursively by
\[
\mathcal{F}_{T,-X}=\mathcal{F}_{T,\ell,-X} + \mathcal{F}_{T-P_\ell,-(X+v)} 
\]

\end{lemma}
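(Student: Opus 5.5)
The plan is to split the minimal forts of $T$ that avoid $X$ according to whether they contain the leaf $\ell$. I read $\mathcal{F}_{T,\ell,-X}$ as the number of minimal forts of $T$ that contain $\ell$ and avoid $X$. With that reading the first term counts exactly the forts containing $\ell$, and the whole content of the lemma is a bijection. It should send the minimal forts of $T$ avoiding $X \cup \{\ell\}$ to the minimal forts of $T - P_\ell$ avoiding $X \cup \{v\}$, with the map $F \mapsto F$.

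First I set up notation. Since $T$ is not a path, Lemma \ref{lemma:path_branch_have_no_neighbor} guarantees that $P_\ell$ has a neighbor $v$. Write $P_\ell = \ell = p_1 \sim p_2 \sim \cdots \sim p_k$, where $p_k$ is the unique vertex of $P_\ell$ adjacent to $v$. Removing $P_\ell$ leaves $T - P_\ell$ connected: $P_\ell$ hangs off $v$ through the single edge $\{p_k, v\}$, so $T-P_\ell$ is a nonempty tree containing $v$. Every $p_i$ with $i<k$ has neighbors only among $p_{i-1},p_{i+1}$, and $p_k$ has neighbors only $p_{k-1}$ and $v$ (or only $v$ if $k=1$).

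Next, the forward direction. Let $F$ be a minimal fort of $T$ with $\ell \notin F$ and $F \cap X = \emptyset$. I show by induction along the branch that $P_\ell \cap F = \emptyset$ and $v \notin F$.
\begin{itemize}
\item Since $\ell \notin F$ and $\ell$ has the single neighbor $p_2$ (or $v$ when $k=1$), the fort condition forbids $\ell$ from having exactly one neighbor in $F$. Hence that neighbor is not in $F$.
\item Inductively, if $p_{i-1}, p_i \notin F$, then $p_i$ has at most one other neighbor. That neighbor must also lie outside $F$, otherwise $p_i$ would have exactly one neighbor in $F$.
\item The last step of this induction gives $v \notin F$.
\end{itemize}
So $F \subseteq V(T) \setminus (P_\ell \cup \{v\})$, and $F$ avoids $X \cup \{v\}$. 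Every vertex of $T-P_\ell$ outside $F$ has the same neighbors in $F$ as it has in $T$, so $F$ is a fort of $T-P_\ell$. For minimality, let $F' \subsetneq F$ be a fort of $T - P_\ell$. I check that $F'$ is a fort of $T$. The vertices of $P_\ell$ have no neighbors in $F'$: $p_k$'s only outside neighbor is $v \notin F'$, and the other $p_i$ have all their neighbors inside $P_\ell$. Vertices of $T-P_\ell$ have unchanged counts. So $F'$ is a fort of $T$ strictly inside $F$, contradicting the minimality of $F$.

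Finally, the converse. Let $F$ be a minimal fort of $T-P_\ell$ avoiding $X\cup\{v\}$. The same neighbor-count observation shows $F$ is a fort of $T$: each $p_i$ has zero neighbors in $F$ because $v\notin F$. It avoids $X$, and it avoids $\ell$ because $F \subseteq V(T-P_\ell)$. Now let $F'' \subseteq F$ be any fort of $T$. Then $F'' \cap P_\ell = \emptyset$, so $F''$ is a fort of $T-P_\ell$, and minimality of $F$ gives $F''=F$. The two maps are identities on sets, so they are mutually inverse, and adding the count of minimal forts containing $\ell$ gives the recursion. The only delicate point I anticipate is the propagation step and the edge case $k=1$, where $\ell$ is adjacent directly to $v$. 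Both are handled by the fort condition at a vertex with at most one uncolored neighbor outside $F$, so I do not expect a real obstacle.
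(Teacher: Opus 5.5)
Your proposal is correct and follows essentially the same route as the paper. It splits on whether $\ell \in F$, propagates the fort condition along the branch to show $P_\ell \cup \{v\}$ lies outside $F$, and uses the identity bijection with minimal forts of $T-P_\ell$ that avoid $X+v$. You additionally verify explicitly that minimality transfers in both directions, a step the paper's short argument leaves implicit.
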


\begin{proof}
Let $S$ be a minimal fort of $T$ with $S \cap X = \emptyset$. Choose a leaf $\ell$. Then either $\ell \in S$ or $\ell \not\in S$. In the first case, $S$ is counted toward $\mathcal{F}_{T,\ell,-X}$. If $\ell \not\in S$ then the color-change rule forces the entire path branch of $\ell$ and its neighbor, $v$ to be colored. Since no vertex in $P_\ell$ is adjacent to any vertex in $T-P_\ell$ except $v$, $S$ must also be a fort of $T - P_\ell$ that avoids $X+v$. This is a bijection since taking any fort of $T - P_\ell$ where $X+v$ is avoided is also a fort of $T$. \end{proof}

Remark: As presented, Lemma \ref{lem:elim_a_path_branch} counts the number of sets. However, the proof also applies in recursively determining all of the minimal forts of $T$, not just counting them.

\section{The Base Case for Theorem \ref{thm:main} ($n \le 19$) and an Algorithm for finding Minimal Forts of Trees}

In this section, we present the exact values for $\mathcal{F}_{T_n}$ and $\mathcal{F}_{R_n}$ up to $n=19$ as needed in the base case for the proof of Theorem \ref{thm:main}. These values are computed using an algorithmic search in Section \ref{sec:basecase}.

We emphasize that Theorem \ref{thm:main} does not hold for $n=2$ as $\mathcal{F}_{P_2} = 1$ whereas $\mathcal{F}_{R_2} = 2$. This will create a special subcase for our induction step in Section \ref{sec:mainresult}.

\subsection{An Efficient Algorithm for Listing all Minimal Forts of a Tree}\label{sec:algorithm}

In order to calculate these values, we present an efficient method for determining all minimal forts of a tree. Algorithms \ref{alg:minimal_forts} and \ref{alg:decide} operate together to generate a comprehensive list of minimal forts by systematically exploring the tree via breadth-first search. Algorithm \ref{alg:minimal_forts} controls the search and Algorithm \ref{alg:decide} implements Lemma \ref{lem:lemmaforalgorithm} to determine which unexplored neighbors can be added to the fort. The procedure is summarized as follows:

\begin{enumerate}[label=\textbf{Step \arabic*:},leftmargin=1.5cm]

\item Begin with an empty set of permanently colored vertices and choose a starting leaf $\ell$ to be included in a fort.

\item  Iteratively grow a list, $Q_{forts}$, of all possible forts that include that leaf $\ell$. Do this by traversing the tree by breadth-first search. At each step, there is an active vertex $u$ and its previously visited neighbor $p$.
Lemma \ref{lem:lemmaforalgorithm} explicitly determines all of the possibilities of extending each $F \in Q_{forts}$:
    \begin{itemize}
        \item If both $u$ and $p$ are colored, all remaining neighbors of $u$ must be colored.
        \item If only $u$ is colored, exactly one remaining neighbor of $u$ must be colored.
        \item If only $p$ is colored, at most one remaining neighbor of $u$ can be colored.
        \item If both $u$ and $p$ are uncolored, all remaining neighbors of $u$ must be colored.
        \item If $u$ is uncolored, all its leaf neighbors must also be uncolored.
    \end{itemize}
    If no valid extensions of $F$ exist (because it violates the last condition or it requires a vertex we have designated as permanently colored to be uncolored), discard $F$ instead.  Save all valid fort extensions into $Q_{forts}$ and repeat until the entire graph is traversed.

    \item Save all resulting forts $Q_{forts}$. Prune the tree by deleting the path branch containing $\ell$ and permanently color the neighbor of that path branch (preventing its inclusion in future forts) and repeat from step 1 to grow a new set of forts $Q_{forts}$ for a new leaf. 
    
    \item When entire tree is pruned (and hence all leaves are considered), return the entire collection of all forts.
\end{enumerate}

\label{alg:ComputeMinimalForts}
\begin{algorithm}[H]
\SetKwInOut{Input}{Input}
\SetKwInOut{Output}{Output}
\SetKwFunction{Decide}{Decide Neighbors Logic}
\SetKwFunction{Minimal}{Compute Minimal Forts}

\Input{A tree (described by  $Adj$, a list of lists of adjacency relations),\\
A set of prohibited vertices $colorForcedSet$ (default= $\emptyset$)}
\Output{List of Minimal Fort sets that do not contain vertices in $colorForcedSet$  
}

\caption{Compute Minimal Forts}
\label{alg:minimal_forts}

$Leaves \leftarrow $ all leaves of the tree\;
$Q_{v} \leftarrow$ empty queue, $Q_{prev} \leftarrow$ empty queue, $Q_{forts} \leftarrow$ empty queue\;

Pick a starting leaf $\ell \not \in colorForcedSet$, add it to $Q_{v}$, add $\{l\}$ into $Q_{forts}$ (i.e., $\ell$ is necessarily uncolored for this iteration) (
if there are no leaves left, return $\emptyset$ \label{begin}) \;

\tcp{We will now process vertices via breadth-first search}
\While{$Q_{v}$ is not empty}{

    $curr \leftarrow$ The next vertex from the queue $Q_v$; remove it from $Q_v$\;  
    $prev \leftarrow$ the next vertex from the queue $Q_{prev}$; remove it from $Q_{prev}$; (if none, $prev \leftarrow {null}$) \; 

    \tcp{$prev$ is a neighbor of $curr$ that was already visited.}

    Mark $curr$ as visited\;
    

    $Neighbors_{leaves} \leftarrow $ all leaves neighbors of $curr$\;

    $\mathcal{F}\leftarrow Q_{forts}$
\ForEach{$CurrentFort\in\mathcal{F}$}{
        remove $CurrentFort$ from $Q_{forts}$     
    \label{lin10} \; 
        \tcp{Branch out solutions based on local constraints by applying $\Decide$ (Algorithm \ref{alg:decide})} 
        Add all sets $\Decide(F=CurrentFort, N=Adj[curr],colorForcedSet,L= Neighbors_{leaves}, p=prev, u=curr)$ to $Q_{forts}$\;
        }
    Mark all unvisited neighbors and put them into $Q_{v}$; for each one, add its visited neighbor, $curr$, into $Q_{prev}$\;
}
\tcp{Handle remaining branches recursively}
\If{$Adj$ is empty}{
\Return{$\emptyset$} \quad\tcp{i.e., if the tree is empty; this is the base case of the recursion}
}

$newTreeAdj \leftarrow$ Adjacency list for $T - B_\ell$, where $B_\ell$ is the path branch containing starting leaf $\ell$\label{deletepath}\;

$ colorForcedSet \leftarrow colorForcedSet \cup \{ \text{the neighbor of the path branch } B_\ell\} \label{addtoforce}$;


\Return{ $Q_{forts}\cup$ \Minimal (newTreeAdj,colorForcedSet)} \;
\end{algorithm}

\begin{algorithm}[H]
\label{alg:decide}
\SetKwInOut{Input}{Input}
\SetKwInOut{Output}{Output}
\Input{Fort so far $F$, Neighbors $N$, Forced Set $colorForcedSet$, Leaf neighbors $\ell$, Previous visited neighbor $p$, Center $u$}
\tcp{$N$ is set of all neighbors of $u$}

\Output{List of Valid Fort Extensions}

$List \leftarrow \emptyset$\;
\tcp{If $u$ is a starting leaf from Algorithm I, it is not bound by any conditions from (II), so a possible fort can possibly contain $\{u\}$ (but not $N[0]$) xor possibly $\{u,N[0]\}$}
\If{$p$ is null}{
    $List\leftarrow List \cup \{\{u\}\}$\;
    \If{$N[0]\notin colorForcedSet$}{
$List\leftarrow List \cup \{\{u,N[0]\}\}$\;
    }
    \Return{$List$}\;
}
\tcp{if $u$ is not a starting leaf, apply the conditions from Lemma \ref{lem:lemmaforalgorithm}.}
\uIf{$u\notin F$ (i.e., $u$ is colored)}{
    \eIf{$p\notin F$ (i.e, $p$ is colored)}{
        \tcp{Apply condition (II.i): There can be no additional neighbors in $F$}
        \Return{$\{F\}$}\;
    }{
        \tcp{Apply condition (II.ii): There must one additional neighbor in $F$}
        \ForEach{$n \in N$}{
            \If{$n \neq p \land n \notin colorForcedSet$}{
                $List\leftarrow List \cup \{F \cup \{n\}\}$\;
            }
        }
    }
}
\Else{
    \eIf{$p\notin F$ (i.e, $p$ is colored)}{

        \uIf{$|L| > 1$}{
        \tcp{Apply condition (II.iii): At most one leaf must be uncolored, making the rest colored, which cannot happen; remove that potential fort from the list}
            \Return{$null$}\;
        }
        \uElseIf{$|L| = 1$}{
        \tcp{Apply condition (II.iii) and condition (I): The leaf must be uncolored!}
            \Return{$\{F \cup \{L[0]\}\}$}\;
        }
        \Else{ 
            \tcp{Apply condition (II.iii): There can be at most one additional neighbor in $F$}
            $List\leftarrow List \cup \{F\}$\;
            \ForEach{$n \in N$}{
                \If{$n \neq p \land n \notin colorForcedSet$}{
                    $List\leftarrow List \cup \{F \cup \{n\}\}$\;
                }
            }
        }
    }{
        \uIf{$(|L| = 1 \land L[0] \neq p) \lor |L| > 1$}{   \tcp{Apply condition (II.iv): There can be no additional neighbors in $F$, which cannot happen if any remaining unvisited neighbors are leaves}
            \Return{$null$}\;
        }
         \tcp{Otherwise apply condition (II.iv): No additional neighbors can be added.}
        \Return{$\{F\}$}\;
    }
}
\Return{$List$}\;
\caption{Decide Neighbors Logic (a subroutine of Algorithm \ref{alg:minimal_forts})}
\end{algorithm}

\begin{theorem}
\Minimal($T, \emptyset$) (Algorithm \ref{alg:minimal_forts}) returns exactly the set of all minimal forts of the tree $T$.
\end{theorem}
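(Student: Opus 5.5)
The proof splits along the two levels of recursion in Algorithm \ref{alg:minimal_forts}: the outer pruning loop and the inner breadth-first growth. I will prove two claims.

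\textbf{Claim A.} For a fixed starting leaf $\ell$ and a forced set $X$ with $\ell\notin X$, the breadth-first phase ends with $Q_{forts}$ equal to the set of minimal forts of $T$ that contain $\ell$ and avoid $X$.

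\textbf{Claim B.} The recursive call on $T-B_\ell$ with forced set $X\cup\{v\}$ returns exactly the minimal forts of $T$ that avoid $X$ but do not contain $\ell$.

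Combining the two gives the theorem by strong induction on $|V(T)|$. The base case is the empty tree, which returns $\emptyset$. The path case, where $B_\ell$ has no neighbor, is handled by Lemma \ref{lemma:path_branch_have_no_neighbor}: the pruned tree is empty, so every minimal fort must contain $\ell$, and Claim A already accounts for all of them.

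Claim B is essentially Lemma \ref{lem:elim_a_path_branch}. If $\ell\notin S$, coloring propagates along $B_\ell$ and then colors $v$. So $S$ is a minimal fort of $T-B_\ell$ avoiding $X+v$, and the converse also holds. One technicality needs care. After pruning, $T-B_\ell$ may be a forest, and it may contain vertices that are leaves in $T-B_\ell$ but not in $T$. I will therefore state the induction hypothesis for forests with a prohibited set. Lemma \ref{lem:forestdecomposition} and Lemma \ref{lem:lemmaforalgorithm}, applied componentwise, justify that the algorithm's restart at a new leaf $\ell'\notin X$ behaves correctly on each component. I also need that every remaining leaf is eventually chosen, so that no component is skipped.

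Claim A carries the substance. I will prove it by an invariant over the BFS. Let $V_t$ be the set of visited vertices together with their already-decided neighbors after processing $t$ vertices. The invariant is that $Q_{forts}$ equals the set of restrictions $S\cap V_t$, where $S$ ranges over subsets that contain $\ell$, avoid $X$, and satisfy conditions \ref{lemalg_condA} and \ref{lemalg_condB} of Lemma \ref{lem:lemmaforalgorithm} (with respect to $\ell$) on every edge $\{a,b\}$ with $a$ already processed. Because BFS from $\ell$ processes $u$ with $p$ as its parent, $p$ is always closer to $\ell$ than $u$. Each edge $\{p,u\}$ with $u$ processed therefore matches the pair $(a,b)=(p,u)$ in condition \ref{lemalg_condB}. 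The step then verifies that each branch of Algorithm \ref{alg:decide} implements exactly the corresponding subcase (i)–(iv), restricted to the not-yet-decided neighbors $N(u)\setminus\{p\}$.

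The leaf-neighbor tests in Algorithm \ref{alg:decide} need separate justification. They encode Corollary \ref{cor:condition_2_with_ending_leaf}, together with the observation that a leaf child has no further edges at which it could satisfy its own condition. For example, when $u\in S$ and $p\notin S$, a leaf child $w$ placed in $S$ would violate condition \ref{lemalg_cond3} on the edge $\{u,w\}$ unless $w$ is the unique chosen child. Each such case has to be checked against the lemma. The starting case $p=\text{null}$ is handled separately: there the two options $\{\ell\}$ and $\{\ell,N[0]\}$ correspond to whether $\ell$'s neighbor lies in $S$.

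At termination every edge has been oriented away from $\ell$, so $Q_{forts}$ consists precisely of the sets satisfying \ref{lemalg_condA} and \ref{lemalg_condB} for $\ell$. By Lemma \ref{lem:bothways}, satisfying \ref{lemalg_condB} for $\ell$ is the same as satisfying it for every leaf in $S$. Lemma \ref{lem:lemmaforalgorithm} then identifies these sets as exactly the minimal forts containing $\ell$. The algorithm can only ever produce sets containing $\ell$, and Claim B produces only sets without $\ell$, so the two collections are disjoint and each fort is returned exactly once.

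I expect the main obstacle to be the case-by-case verification that Algorithm \ref{alg:decide} matches conditions (i)–(iv) exactly, and in particular the leaf-handling branches. In those branches the algorithm's guards (for instance $|L|=1 \land L[0]\neq p$) must be shown to encode precisely the constraint that a leaf's only edge already fixes its status. Some of the code comments appear to swap the labels (ii) and (iii), so I would carry out this check directly against the lemma rather than against the comments.
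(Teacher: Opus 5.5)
Your proposal is correct and takes the same route as the paper. The inner breadth-first phase is justified by checking that \textsf{Decide} implements conditions (I)--(II) of Lemma~\ref{lem:lemmaforalgorithm}, and the pruning recursion is justified by Lemma~\ref{lem:elim_a_path_branch}; your explicit appeal to Lemma~\ref{lem:bothways}, your treatment of the path case, and your check of the swapped (ii)/(iii) labels fill in steps the paper leaves implicit.

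Two small corrections. First, when $T$ is not a path, $T-B_\ell$ is always a tree whose leaves are exactly those of $T$ other than $\ell$, because $B_\ell$ is a pendant path and its neighbor $v$ keeps degree at least $2$; so your forest generalization is unnecessary, though harmless. Second, your invariant should be indexed by the edges $\{p,u\}$ whose far endpoint $u$ has been processed, together with the leaf look-ahead tests, rather than by edges whose near endpoint $a$ has been processed. The condition on $\{a,b\}$ constrains $N(b)$, and that is only fixed when $b$ is processed.
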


\begin{proof} 

Observe that \Decide emulates the necessary and sufficient conditions of Lemma \ref{lem:lemmaforalgorithm} (and Corollary \ref{cor:condition_2_with_ending_leaf}):

\begin{itemize}
        \item Condition \ref{lemalg_condB}\ref{lemalg_cond1}: If both $u$ and $p$ are colored, all remaining neighbors of $u$ must be colored.
        
        \item Condition \ref{lemalg_condB}\ref{lemalg_cond2}: If only $u$ is colored, there must be exactly one among all remaining neighbors of $u$ are colored.
        
        \item Condition \ref{lemalg_condB}\ref{lemalg_cond3}: If only $p$ is colored, there can only be at most one among all remaining neighbors of $u$ is colored.

        \item Condition \ref{lemalg_condB}\ref{lemalg_cond4}: If both $u$ and $p$ are uncolored, all remaining neighbors of $u$ must be colored.

    \end{itemize}

Note that for conditions \ref{lemalg_cond3} and \ref{lemalg_cond4}, if the result is not possible, \Decide will remove that set as a possible subset of a minimal fort (Corollary \ref{cor:condition_2_with_ending_leaf}).

Altogether by Lemma \ref{lem:lemmaforalgorithm}, the subroutine \Decide produces all possible subsets of minimal forts that contain $F$ and avoid ${coloredForcedSet}$. Hence, exhaustively and recursively calling \Decide by expanding each possible subset (\Minimal line \ref{lin10}) will delineate all minimal forts that avoid ${coloredForcedSet}$.

By Lemma \ref{lem:elim_a_path_branch}, it suffices to iterate this procedure by ordering the leaves of $T$, $\ell_1, \ldots, \ell_k$, and beginning each iteration with $F =  \{\ell_i\}$ (\Minimal line \ref{begin}), having deleted the path branches $B_1, \ldots, B_{i-1}$ (\Minimal line \ref{deletepath}) and adding the neighbors of each path branch $v_1, \ldots, v_{i-1}$ (which may be duplicated) to ${coloredForcedSet}$ (\Minimal line \ref{addtoforce}). (For emphasis, each path branch $B_i$ is a path branch of $T-B_1-B_2-\ldots-B_{i-1}$ not necessarily a path branch of $T$.)

\end{proof}

    \begin{figure} \centering
\begin{tikzpicture}[
    every node/.style={circle, draw, minimum size=4mm, inner sep=0pt, thick},%
    level 1/.style={sibling distance=32mm, level distance=15mm, thick}, %
    level 2/.style={sibling distance=8mm, level distance=15mm, thick},%
    edge from parent/.style={draw, thick}
]

\node {}
    child { node {}
        child { node {} }
        child { node {} }
        child { node {} }
        child { node {} }
    }
    child { node {}
        child { node {} }
        child { node {} }
        child { node {} }
        child { node {} }
    }
    child { node {}
        child { node {} }
        child { node {} }
        child { node {} }
    }
    child { node {}
        child { node {} }
        child { node {} }
        child { node {} }
    };

\end{tikzpicture}

\caption{The tree $T(19,4,4,2)$. It is the unique tree on 19 vertices with the maximum number of minimal forts (162) among all trees on $19$ vertices.}
    \end{figure}

\begin{figure} \centering

\begin{tikzpicture}[
        every node/.style={circle, draw, minimum size=4mm, inner sep=0pt, thick},
        %
        level 1/.style={sibling distance=20mm, level distance=15mm, thick},
        %
        level 2/.style={sibling distance=9mm, level distance=15mm, thick},
        %
        edge from parent/.style={draw, thick}
    ]

    \node [] {}
        child { node {}
            child { node {} }
            child { node {} }
            child { node {} }
        }
        child { node {}
            child { node {} }
            child { node {} }
        }
        child { node {}
            child { node {} }
            child { node {} }
        }
        child { node {}
            child { node {} }
            child { node {} }
        }
        child { node {}
            child { node {} }
            child { node {} }
        }
        child { node {} }
        child { node {} };

    \end{tikzpicture}

\caption{An example of a tree with the minimum number of minimal forts (8) among all trees on $19$ vertices.}
\label{fig:19minimum}
\end{figure}

\subsection{Explicit Results for the Base Case} \label{sec:basecase}

\begin{table}[ht]
    \centering
    \begin{tabular}{|c|c|c|c|c|}
        \hline
        $n$ & $\mathcal{F}_{T_n}$ & \text{Maximum Tree} & $\mathcal{F}_{R_n}$ & \text{Maximum Forest} \\
        \hline \hline
        4 & 3 & $S_4$ & 4 & $E_4$, Empty Graph\\
        5 -- 18 & $\binom{n-1}{2}$ & $S_n$ &  $\binom{n-1}{2}$  & $S_n$  \\
        19 & 162 & $T(19,4,4,2)$ & 162 & $T(19,4,4,2)$ \\
        20 & 213 & $T(20,4,4,1)$ & 213 & $T(20,4,4,1)$ \\
        
        \hline
    \end{tabular}
    \caption{A table of values for $\mathcal{F}_{T_n}$ and $\mathcal{F}_{R_n}$ for $n=4$ through $n=20$.}
    \label{tab:my_table}
\end{table}

\begin{lemma} \label{lem:basecase} For $n=1, \ldots 19$,
\[
\mathcal{F}_{T_n} = 
\begin{cases} 
    1 & \text{for } n = 1, 2, 3 \\
    \binom{n-1}{2} & \text{for } n = 4, 5, \ldots, 18 \\
    162 & \text{for } n = 19
\end{cases}
\]

\[
\mathcal{F}_{R_n} = 
\begin{cases} 
    n & \text{for } n = 1, 2, 3, 4\\
    \binom{n-1}{2} & \text{for } n = 5, \ldots, 18 \\
    162 & \text{for } n = 19
\end{cases}
\]
\end{lemma}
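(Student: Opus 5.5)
The statement is essentially a finite computation, so I would prove it by exhaustive enumeration backed by the correctness theorem for Algorithm \ref{alg:minimal_forts}. Small cases can be checked by hand. For $n=1,2,3$ the only trees are $P_1,P_2,P_3$. Their unique minimal forts are $\{v\}$ for $P_1$, $V(P_2)$ for $P_2$, and the two leaves for $P_3$, so $\mathcal{F}_{T_n}=1$. For forests, Lemma \ref{lem:forestdecomposition} gives $\mathcal{F}_R=\sum_i \mathcal{F}_{T_i}$. Since every tree on at most $3$ vertices has exactly one minimal fort, the edgeless graph $E_n$ attains $n$ for $n\le 3$. For $n=4$, the trees are $P_4$ (with $\mathcal{F}_{P_4}=\mathcal{F}_{P_2}+\mathcal{F}_{P_1}=2$ by Proposition \ref{prop:path}) and $S_4$ (with $\binom{3}{2}=3$). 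Any forest with at least two components has components of size at most $3$, giving at most $4$ total, achieved by $E_4$.

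For $5\le n\le 19$, I would enumerate all nonisomorphic trees on $n$ vertices, for example with a standard canonical-form generator such as Wright--Richmond--Odlyzko--McKay. On $19$ vertices there are only $317955$ trees, so this is feasible. For each tree I would run \Minimal$(T,\emptyset)$, whose output is exactly the set of minimal forts by the preceding theorem, and take the maximum. To guard against implementation errors, I would cross-check on small $n$ (say $n\le 10$) against a brute-force test using the characterization in Lemma \ref{lem:lemmaforalgorithm}, or directly against the fort definition plus the minimality test from the Proposition that coloring $\overline F\cup\{f\}$ forces everything. I would also confirm that $S_n$ gives $\binom{n-1}{2}$, that $T(19,4,4,2)$ gives $4^2\cdot 3^2+2\binom42+2\binom32=144+12+6=162$, and that the path values match Proposition \ref{prop:path}.

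For forests with $5\le n\le 19$, I would avoid a separate enumeration and use Lemma \ref{lem:forestdecomposition} with the tree values just computed. The forest maximum is
\[\mathcal{F}_{R_n}=\max_{n_1+\cdots+n_c=n}\sum_i \mathcal{F}_{T_{n_i}},\]
which is a small knapsack/partition dynamic program: $\mathcal{F}_{R_n}=\max\{\mathcal{F}_{T_n},\max_{1\le j<n}(\mathcal{F}_{T_j}+\mathcal{F}_{R_{n-j}})\}$. I would verify by hand that splitting never helps. The tree values $1,1,1,3,6,10,\dots$ grow superadditively (for example $\binom{n-1}{2}$ is convex in $n$), and the singleton components contribute only $1$ each, which loses against $\binom{n-1}{2}-\binom{n-2}{2}=n-2\ge 3$ once $n\ge5$. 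Thus $\mathcal{F}_{R_n}=\mathcal{F}_{T_n}$ for $n\ge5$, including $162$ at $n=19$, where one checks $162>\mathcal{F}_{T_{18}}+1=137$.

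The main obstacle is the computational step at $n=17,18,19$. Hundreds of thousands of trees must be processed, and the number of partial forts in the breadth-first extension can blow up. There is also the need to trust the implementation. I would address this by generating trees canonically so no isomorphism filtering is needed and by parallelizing. I would also exploit that only the maximum is needed: the count is exact, but I would sanity-check it against the upper bound via Lemma \ref{lem:elim_a_path_branch} on many random trees. Finally, I would record the uniqueness claims (e.g.\ $T(19,4,4,2)$) as byproducts rather than as part of the proof.
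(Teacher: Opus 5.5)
Your proposal is correct and follows the paper's proof. Like the paper, you check the smallest cases by inspection, run Algorithms \ref{alg:minimal_forts} and \ref{alg:decide} exhaustively over a complete list of nonisomorphic trees (the paper uses McKay's list), and obtain the forest values from Lemma \ref{lem:forestdecomposition} by maximizing over integer partitions; your dynamic program is equivalent to the paper's \emph{Mathematica} enumeration. One small slip: the tree values are not literally superadditive at the bottom ($\mathcal{F}_{T_1}+\mathcal{F}_{T_1}=2>\mathcal{F}_{T_2}=1$, which is exactly why $\mathcal{F}_{R_n}=n$ for $n\le 4$), but your partition dynamic program settles the cases $n\ge 5$ exactly regardless.
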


\begin{proof}
    The calculation for $\mathcal{F}_{T_n}$ is achieved by brute force search by computer by applying Algorithms \ref{alg:minimal_forts} and \ref{alg:decide} to the comprehensive list of trees compiled by McKay \cite{McKayTrees}; the cases $n=1, 2, 3$ are performed by inspection. The implementation and results of this computation can be found here \cite{datkony_minimalforts}.
    
    The calculation for $\mathcal{F}_{R_n}$ is achieved by applying the results for $\mathcal{F}_{T_n}$ to Lemma \ref{lem:forestdecomposition} by considering all integer partitions of $n$ with the {\it Mathematica} code 
    \begin{verbatim}
ftn[19]:= 162; ftn[n_] := If[n <= 3, 1, Binomial[n - 1, 2]]
Table[Max[(Total[(ftn[#] &) /@ #]) & /@ IntegerPartitions[i]], {i, 1,19}]\end{verbatim}
\end{proof}

\begin{lemma}
Theorem \ref{thm:main} holds for $3 \le n \le 19$.
\end{lemma}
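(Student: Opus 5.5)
This is a finite check, so the plan is to make it a direct table comparison built on Lemma \ref{lem:basecase}. The first inequality $\mathcal{F}_{T_n} \le \mathcal{F}_{R_n}$ is immediate for every $n$, since a tree is a forest. It therefore suffices to verify $\mathcal{F}_{R_n} \le \binom{n}{2}\mathcal{F}_{P_n}$ for each $3 \le n \le 19$, reading $\mathcal{F}_{R_n}$ off Lemma \ref{lem:basecase}.

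For the right-hand side I would compute $\mathcal{F}_{P_n}$ from Proposition \ref{prop:path} with $a_1=a_2=a_3=1$. This gives
$a_4=2$, $a_5=2$, $a_6=3$, $a_7=4$, $a_8=5$, $a_9=7$, $a_{10}=9$, $a_{11}=12$, $a_{12}=16$, $a_{13}=21$, $a_{14}=28$, $a_{15}=37$, $a_{16}=49$, $a_{17}=65$, $a_{18}=86$, $a_{19}=114$.
The only fact I need is $\mathcal{F}_{P_n} \ge 1$ for all $n$, which makes the comparison trivial in most ranges.

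I would split into three cases.
\begin{itemize}
\item For $n=3,4$: $\mathcal{F}_{R_n}=n$, and $n \le \binom{n}{2}$ holds for $n\ge 3$.
\item For $5\le n\le 18$: $\mathcal{F}_{R_n}=\binom{n-1}{2} < \binom{n}{2} \le \binom{n}{2}\mathcal{F}_{P_n}$.
\item For $n=19$: $\mathcal{F}_{R_{19}}=162 \le 171 = \binom{19}{2}$, and already $171\cdot 1 \ge 162$. The true bound is $171\cdot 114$, so there is a large margin.
\end{itemize}

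I expect no real obstacle. The only delicate point is that the statement relies entirely on the correctness of the computed values in Lemma \ref{lem:basecase}, in particular $\mathcal{F}_{R_4}=4$ and $\mathcal{F}_{R_{19}}=162$. The $n=2$ failure mentioned in the text is excluded here because we start at $n=3$.
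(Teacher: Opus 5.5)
Your proposal is correct and takes the same approach as the paper: it compares the values of $\mathcal{F}_{R_n}$ from Lemma \ref{lem:basecase} with $\binom{n}{2}\mathcal{F}_{P_n}$ computed from Proposition \ref{prop:path}, and your path values agree with Table \ref{tab:n2path}. Your observation that $\mathcal{F}_{R_n}\le\binom{n}{2}$ already holds on this range, so that $\mathcal{F}_{P_n}\ge 1$ suffices, is a harmless simplification of the same finite check.
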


The lemma follows from calculations based upon Lemma \ref{lem:basecase}  and Proposition \ref{prop:path}. The results are given in Table \ref{tab:n2path}. \hfill $\qed$

    \begin{table}[ht]
\centering

\begin{tabular}{|c|c|c|c|}
\hline
    $n$ & $\mathcal{F}_{T_n}$ & $\mathcal{F}_{R_n}$ & $\binom{n}{2} \mathcal{F}_{P_n}$ \\
\hline \hline
1  & 1   & 1   & 1      \\
\color{red}{\textbf{2}}  & \color{red}{\textbf{1}}   & \color{red}{\textbf{2}}   & \color{red}{\textbf{1}}      \\ \hline
3  & 1   & 3   & 3      \\
4  & 3   & 4   & 12     \\
5  & 6   & 6   & 20     \\
6  & 10  & 10  & 45     \\
7  & 15  & 15  & 84     \\
8  & 21  & 21  & 140    \\
9  & 28  & 28  & 252    \\
10 & 36  & 36  & 405    \\
\hline
\end{tabular} \quad
\begin{tabular}{|c|c|c|c|}
\hline
    $n$ & $\mathcal{F}_{T_n}$ & $\mathcal{F}_{R_n}$ & $\binom{n}{2} \mathcal{F}_{P_n}$ \\
    \hline \hline
11 & 45  & 45  & 660    \\
12 & 55  & 55  & 1,056  \\
13 & 66  & 66  & 1,638  \\
14 & 78  & 78  & 2,548  \\
15 & 91  & 91  & 3,885  \\
16 & 105 & 105 & 5,880  \\
17 & 120 & 120 & 8,840  \\
18 & 136 & 136 & 13,158 \\
19 & 162 & 162 & 19,494 \\ \hline
20 & 213 & 213 & 28,690 \\
\hline
\end{tabular}
 \caption{Results of calculations establishing the base case for Theorem \ref{thm:main} for $3 \le n \le 19$. Theorem \ref{thm:main} does not hold for $n=2$. The results for $n=20$ are not necessary for our base case.}
   \label{tab:n2path}
\end{table}

\subsection{Other Computational Results} \label{sec:compresults}

We apply the previous Algorithms to calculate the number of minimal forts on all trees from $n=4$ to $n=20$ vertices. 

Our implementation is completed in Java and can be found here: \cite{datkony_minimalforts}. Our computations were performed on a standard Windows laptop with an Intel i7-6820HQ processor with 4 cores, 2.7 GHz and 32 GB RAM.

Table \ref{tab:comp} shows the mean number of minimal forts and the mean execution time for trees on $n=10$ to $n=20$ vertices and Figure \ref{fig:execution} plots the individual execution times for all trees from $n=4$ to $n=20$ vertices. In general, the algorithm is very fast, as the {\it worst case} for all trees is just over 0.1 seconds, and in most cases, is substantially faster. The total computation lasts 984.4 seconds.

While we do not perform a formal complexity analysis, the worst cases for $n=16$ to $n=20$ follow a distinct increasing linear-trend, suggesting that the time complexity, in the worst case, increases exponentially.    

\begin{table}[ht!]
\centering
\caption{Mean number of minimal forts and execution time given by Algorithm \ref{alg:minimal_forts}.}
\begin{tabular}{|r|c|c|}
\hline
$n$ & Mean number of minimal forts & Mean execution time per tree (ms) \\ \hline \hline
{10} & 10.3019 & 0.0423525 \\ \hline
{11} & 11.9745 & 0.0346326 \\ \hline
{12} & 13.7731 & 0.0332337 \\ \hline
{13} & 15.8040 & 0.0393446 \\ \hline
{14} & 17.9984 & 0.0493654 \\ \hline
{15} & 20.4303 & 0.0544392 \\ \hline
{16} & 23.0982 & 0.0690336 \\ \hline
{17} & 26.0264 & 0.0934141 \\ \hline
{18} & 29.2348 & 0.1574270 \\ \hline
{19} & 32.7540 & 0.3378550 \\ \hline
{20} & 36.6056 & 1.0364120 \\ \hline
\end{tabular}
\label{tab:comp}
\end{table}

\begin{figure}[H]
    \centering
    \includegraphics[width=0.7\linewidth]{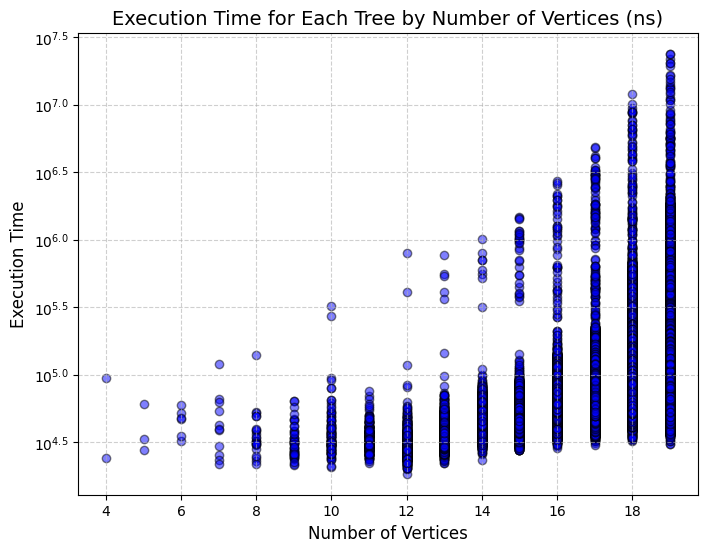}
    \caption{Log-plot of the execution time (ns) for individual trees based on the number of vertices.}
    \label{fig:execution}
\end{figure}

\section{Induction Step for Theorem \ref{thm:main} ($n \ge 20$) and a Core Recursion Inequality}

In this section, we provide the induction step for Theorem \ref{thm:main}, establishing the upper bound for the number of minimal forts on trees and forests.

\subsection{The Core Recursion Inequality for $\mathcal{F}_{R_n}$} \label{sec:recurr}

The main tool to prove Theorem \ref{thm:main} is a recursion inequality similar to the recursion relation for $\mathcal{F}_{P_n}$ that applies all forests.

\begin{lemma}\label{lem:recurr} For $n \ge 4$, $\mathcal{F}_{R_n}$ satisfies either
\begin{equation} \label{eqn:path}
 \mathcal{F}_{R_n} \le \mathcal{F}_{R_{n-2}}+\mathcal{F}_{R_{n-3}} 
\end{equation}
or, for some $d$, $3 \le d \le n-1$,
\begin{equation} \label{eqn:choices} \mathcal{F}_{R_n} \le {{d-1} \choose 2} + (d-1)\mathcal{F}_{R_{n-d}}.\end{equation}
\end{lemma}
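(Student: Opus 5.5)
Let $R$ be a forest on $n$ vertices with $\mathcal{F}_R = \mathcal{F}_{R_n}$. If $R$ is disconnected with components $R_1,\dots,R_c$ of sizes $n_1,\dots,n_c$, then Lemma \ref{lem:forestdecomposition} gives $\mathcal{F}_R=\sum_i \mathcal{F}_{R_i}$. We may therefore pass to a component $T=R_1$ and write $\mathcal{F}_R \le \mathcal{F}_{T} + \mathcal{F}_{R_{n-n_1}}$. Because a recursion on the tree $T$ only deletes vertices of $T$, and the rest of $R$ can be carried along unchanged, I would run the argument directly on $R$. The plan is to pick a suitable leaf $\ell$ of some component and split the minimal forts according to the structure near $\ell$, as in Lemma \ref{lem:elim_a_path_branch} and Lemma \ref{lem:three_vertex_branch}.

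\textbf{Case 1: some component has a path branch with at least two vertices, or is itself a path on at least three vertices.} Take a leaf $v$ with neighbor $w$ of degree $2$, and let $x$ be the other neighbor of $w$. Every minimal fort $F$ (lying in this component) falls into one of two types.
\begin{itemize}
\item If $v\notin F$, then $v$ forces $w$, since $v\notin F$ implies $w\notin F$. Then $F$ is a fort of $R-\{v,w\}$, because $w$ has at most the neighbor $x$ in $F$ and would otherwise have exactly one. Minimality transfers, since any smaller fort of $R-\{v,w\}$ avoiding the relevant vertices is a fort of $R$. This gives an injection into the minimal forts of $R-\{v,w\}$, a forest on $n-2$ vertices.
\item If $v\in F$, then $w\in F$ because $v$ needs a neighbor in $F$. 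Lemma \ref{lem:three_vertex_branch} then says $F\setminus\{v,w,x\}$ is a minimal fort of $R-\{v,w,x\}$. This map is injective: $F$ is recovered by adding back $v,w$, noting that $x\notin F$. The one exception is $F\setminus\{v,w,x\}=\emptyset$, which means $F=\{v,w\}$ is itself a fort; that happens only when $x$ has no other neighbor and the component is $P_3$. I would absorb this case by choosing $v$ so that $x$ has degree at least $2$, or by treating a path component through Proposition \ref{prop:path} separately.
\end{itemize}
Summing the two types gives \eqref{eqn:path}. The first type needs care: a minimal fort of $R$ with $v\notin F$ need not be minimal in $R-\{v,w\}$ unless we also require $x$ to have $0$ or $\ge 2$ neighbors in $F$. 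Since we only need an injection into minimal forts, I would check that $F$ stays minimal there. Any fort $F'\subsetneq F$ of $R-\{v,w\}$ is also a fort of $R$, because $w$ has at most one neighbor $x$ and $x\notin F'$ when $x\notin F$. If instead $x\in F$, then $w$ would have exactly one neighbor in $F$, so $F$ would not be a fort of $R$. Hence $x\notin F$, and minimality is inherited.

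\textbf{Case 2: every leaf of every component is adjacent to a vertex of degree $\ge 3$.} Components that are $K_1$ or $K_2$ contribute only $1$ each and can be handled by induction on the rest. Choose a non-path component $T$ and a vertex $u$ adjacent to the most leaves, or better, take $u$ to be a deepest non-leaf vertex from a root. Then all neighbors of $u$ except one, call it $y$, are leaves $a_1,\dots,a_{d-1}$, where $d=\deg u\ge 3$. By Corollary \ref{cor:treenot3neighbors}, a minimal fort contains at most two vertices of $N[u]$. If $u\in F$, each leaf $a_i$ would have exactly one neighbor in $F$ unless $a_i\in F$, which forces three vertices in $N[u]$; so $u\notin F$. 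Now split by how many of the $a_i$ lie in $F$.
\begin{itemize}
\item Two leaves in $F$: by Lemma \ref{lem:cutvertex}, $F$ is exactly a pair $\{a_i,a_j\}$, giving $\binom{d-1}{2}$ forts.
\item Exactly one leaf $a_i\in F$: then $y\in F$, and deleting $u$ and its leaves leaves a set determined by $F$ in $R-\{u,a_1,\dots,a_{d-1}\}$, a forest on $n-d$ vertices. I would show, in the spirit of Lemma \ref{lem:three_vertex_branch}, that $F\cap V(R-N[u]+y)$ is a minimal fort there: $u$ plays the role of a colored vertex needing two fort neighbors, namely $a_i$ and $y$. Ranging over the $d-1$ choices of $i$ gives at most $(d-1)\mathcal{F}_{R_{n-d}}$.
\item No leaf in $F$: then $F$ lies in $R-\{u,a_1,\dots,a_{d-1}\}$ with $y$ constrained, again a minimal fort of a forest on $n-d$ vertices.
\end{itemize}
The main obstacle is that the last two types together must fit inside $(d-1)\mathcal{F}_{R_{n-d}}$ rather than $d\,\mathcal{F}_{R_{n-d}}$. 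My first attempt is to show that the forts with one leaf and those with no leaf map to disjoint families: in the one-leaf type the image contains $y$, while in the no-leaf type, $u$ colored with only the neighbor $y$ in $F$ is forbidden, so $y\notin F$. The restrictions are therefore distinguished by whether $y$ belongs to them. Thus, for a single fixed $i$, the union of the two types injects into $\mathcal{F}$ of the $(n-d)$-vertex forest, and the other $d-2$ choices of $i$ add $(d-2)\mathcal{F}_{R_{n-d}}$, giving \eqref{eqn:choices}.
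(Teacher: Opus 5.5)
Your Case 2 is sound and is essentially the paper's Case 2 (its subcases 2.i--2.iii). This includes the device of separating the one-leaf forts from the no-leaf forts by whether $y$ lies in the restricted set.

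The gap is in Case 1, at the sentence ``If $v\in F$, then $w\in F$ because $v$ needs a neighbor in $F$.'' This is false. The fort condition constrains only vertices \emph{outside} $F$, and a fort vertex may have no fort neighbor at all; the two leaves forming a minimal fort of a star are an example. Concretely, take $P_5$ with vertices $a,b,c,d,e$ in order and $v=a$, $w=b$, $x=c$. Then $\{a,c,e\}$ is a minimal fort with $v\in F$ and $w\notin F$. So your two types do not exhaust the minimal forts. The entire class $v\in F$, $w\notin F$ is never counted, and $\mathcal{F}_R\le \mathcal{F}_{R_{n-2}}+\mathcal{F}_{R_{n-3}}$ does not follow from what you wrote.

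The repair is the same trick you already use in Case 2, and it is the content of the paper's Cases 1.ii and 1.iii.
\begin{itemize}
\item If $v\in F$ and $w\notin F$, then $w$ has only the neighbors $v$ and $x$ and needs a second fort neighbor, so $x\in F$.
\item In that case $F\setminus\{v\}$ is a minimal fort of $R-\{v,w\}$ containing $x$. A proper subfort $F''$ yields the fort $F''\cup\{v\}$ of $R$ if $x\in F''$, and is itself a fort of $R$ if $x\notin F''$; in both cases this lies strictly inside $F$.
\item Your $v\notin F$ forts land among the minimal forts of $R-\{v,w\}$ avoiding $x$. So the two images are disjoint and together contribute at most $\mathcal{F}_{R_{n-2}}$.
\item The class $v,w\in F$ contributes at most $\mathcal{F}_{R_{n-3}}$ via Lemma~\ref{lem:three_vertex_branch}.
\end{itemize}
The ``exception'' $F=\{v,w\}$ you worry about never occurs, since $x\notin F$ would then have exactly one fort neighbor, even in $P_3$.

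Two smaller points. First, when every component is $K_1$ or $K_2$, neither of your cases applies, and ``induction on the rest'' is not an argument for a one-shot inequality. You need the direct count $\mathcal{F}_R\le n\le \mathcal{F}_{R_{n-2}}+\mathcal{F}_{R_{n-3}}$, valid for $n\ge 5$. At $n=4$ the statement itself fails for the edgeless graph on four vertices: $\mathcal{F}_{R_4}=4$, while both right-hand sides equal $3$. Second, in Case 2 you should root the component at a leaf, so that the deepest non-leaf vertex $u$ is guaranteed a parent $y$. The paper effectively does this by rooting at an end of a longest path.
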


\begin{proof}
Let $R$ be a forest on $n \ge 4$ vertices. If no connected component of $R$ has at least 4 vertices, then the number of minimal forts is equal to the number of connected components. In which case, $\mathcal{F}_R \le n$, and inequality (\ref{eqn:path}) holds as $\mathcal{F}_{R_k} \ge k$.

For the remainder, we will assume that $R$ has a component $T$ (a tree) with at least 4 vertices.

Choose a root, $r$ of $T$ so that the maximum distance from $r$ to any leaf vertex in $T$ is maximized. Since $n \ge 4$, $T$ necessarily has a leaf with a non-leaf neighbor. Let $v$ be such a leaf that is furthest from $r$. Let $w$ be the parent of $v$ and let $x$ be the parent of $w$. We have two cases: $d(w) = 2$ and $d(w) \ge 3.$

 {\bf Case 1: $d(w) = 2$.} There are three disjoint subcases.

{\bf Case 1.i:} $d(w)=2$ and $v, w \in F$. Then, $x \not \in F$ as otherwise $F \setminus \{w\}$ is a fort and so $F$ is not minimal. By Lemma \ref{lem:three_vertex_branch}, the set $F \setminus \{v,w,x\}$ is a fort of the subforest $R - \{v,w,x\}$ (which may be disconnected) on $n-3$ vertices. Hence, the number of forts of $T$  where $v, w \in F$ is at most $\mathcal{F}_{R_{n-3}}$.

{\bf Case 1.ii:} $d(w)=2$ and $w \not \in F$ but $v \in F$. Then, $x \not \in F$ as otherwise $\overline F$ is not a failed zero forcing set as $w$ will force $v$. It follows that $F \setminus \{v,w\}$ is a fort of the subforest $R - \{v,w\}$ with $n-2$ vertices. Hence, the number of forts where $w \not \in F$ but $v \in F$ is bounded above by the number of forts in a tree of $n-2$ vertices where a fixed vertex $x$ is included.

{\bf Case 1.iii}: $d(w)=2$ and $v, w \not \in F$. Then, $x \not \in F$ as otherwise $\overline F$ is not a failed zero forcing set as $w$ will force $x$. It follows that $F \setminus \{v,w\}$ is a fort of the subforest $R - \{v,w\}$ with $n-2$ vertices. Hence, the number of forts where $w \not \in F$ and $v \not \in F$ is bounded above by the number of forts in a tree of $n-2$ vertices where a fixed vertex $x$ is excluded.

The cases 1.ii and 1.iii together have that the total number of forts of $R$ where $w \not\in F$ is at most $\mathcal{F}_{R_{n-2}}$.
Altogether, since the three subcases of case 1 are disjoint, if $d(w)=2$, we have $\mathcal{F}_{R_n} \le \mathcal{F}_{R_{n-2}}+\mathcal{F}_{R_{n-3}}$.~\\

{\bf Case 2:} $d(w) \ge 3$. Let $d = d(w)$. If $w \in F$, then at most one neighbor of $w$ is in $F$; however, that means that one leaf neighbor of $w$ is not in $F$ (which exists as $d \ge 3$), and so $\overline F$ is not a maximal failed zero forcing set as that leaf can force $w$. Hence, $w \not \in F$. By Corollary \ref{cor:treenot3neighbors}, there are at most two neighbors of $w$ that are in $F$. We have three subcases:

{\bf Case 2.i:} There are two neighbors of $w$, $w_1$ and $w_2$ that are both leaves in $F$. By Corollary \ref{cor:treenot3neighbors} $x \not \in F$ and by Corollary \ref{lem:three_uncolored_in_a_row}, $w \not\in F$. Further, by Lemma \ref{lem:cutedge}, the only vertices in $F$ must be $w_1$ and $w_2$, and there are ${{d-1} \choose 2}$ ways to choose $w_1$ and $w_2$.

{\bf Case 2.ii:} There are two neighbors of $w$ in $F$, $w_1$ and $x$, where $w_1$ is a leaf in $T$. Observe that $F \setminus (N[v] \cup \{x\})$ is a fort of $T - (N[v] \cup \{x\})$.  Since there are $d-1$ ways of choosing $w_1$, the number of such forts is at most $(d-1)$ times the number of forts in $T - (N[v] \cup \{x\})$ where $x$ is included.

{\bf Case 2.iii:} There are no neighbors of $w$ in $F$; that is, $N[w] \cap F = \emptyset$. Observe that $F \setminus (N[v] \cup \{x\})$ is a fort of $T - (N[v] \cup \{x\})$. Hence, the number of such forts is at most the number of forts in $T - (N[v] \cup \{x\})$ where $x$ is excluded.

Cases 2.ii and 2.iii together have that the number of forts where at most one leaf neighbor of $w$ is included is at most $(d-1) \mathcal{F}_{R_{n-d}}$.

Altogether, since the three subcases are disjoint, if $d \ge 3$ we have \[ \mathcal{F}_{R_n} \le {{d-1} \choose 2}+ (d-1) \mathcal{F}_{R_{n-d}}.\] 
\end{proof}

\subsection{Combinatorial Inequalities for the Induction Step}

\begin{lemma} \label{lem:PndPnPsi}
For $n \ge 19$ and $d \in \{3, 4, 5\}$, the number of minimal forts on a path graph satisfies:
\[ \mathcal{F}_{P_{n-d}} \le \frac{101}{100} \frac{\mathcal{F}_{P_{n}}}{\psi^d} \]
\end{lemma}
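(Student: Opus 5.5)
We will prove the inequality from the closed form $\mathcal{F}_{P_m} = k_1\psi^m + \varepsilon_m$ with $|\varepsilon_m|<1$, established in the proposition following Proposition \ref{prop:path}. The idea is that $\mathcal{F}_{P_{n-d}}\psi^d$ and $\mathcal{F}_{P_n}$ both equal $k_1\psi^n$ up to bounded errors. Their ratio therefore tends to $1$, and the factor $\frac{101}{100}$ absorbs the error once $n$ is moderately large.

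\textbf{Step 1 (reduction).} The claim is equivalent to $100\,\psi^d\,\mathcal{F}_{P_{n-d}} \le 101\,\mathcal{F}_{P_n}$. Using $\mathcal{F}_{P_{n-d}} < k_1\psi^{n-d}+1$ and $\mathcal{F}_{P_n} > k_1\psi^n - 1$, it suffices to show
\[ 100\left(k_1\psi^n + \psi^d\right) \le 101\left(k_1\psi^n - 1\right), \]
that is,
\[ k_1\psi^n \ge 100\,\psi^d + 101. \]
For $d\le 5$ we have $\psi^5 = \psi^2+\psi+1 \approx 4.08$, so the right side is at most about $510$. With $k_1\approx 0.545$ this requires $\psi^n \gtrsim 936$, i.e.\ $n \ge 25$ roughly, since $\psi^{24}\approx 852$ and $\psi^{25}\approx 1129$.

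\textbf{Step 2 (sharper error bound).} To lower the threshold, we will replace the crude bound $|\varepsilon_m|<1$ with the actual decay. We have $|\omega_2| = \psi^{-1/2} \approx 0.8688$, since $|\omega_2|^2\psi = 1$ from the product of the roots, and $|k_2| \approx 0.2824$. Hence
\[ |\varepsilon_m| \le 2|k_2|\,|\omega_2|^m \approx 0.565\cdot 0.8688^m. \]
This is tiny for $m \ge 14$ (below about $0.08$), and $m = n-d \ge 14$ holds whenever $n \ge 19$. Redoing Step 1 with error bounds $\delta$ on both terms, it suffices that
\[ k_1\psi^n \ge 100\,\psi^d\delta_{n-d} + 101\,\delta_n. \]
For $n \ge 19$ the right side is at most about $100\cdot 4.08\cdot 0.08 + 101\cdot 0.04 \approx 37$, while $k_1\psi^{19}\approx 113$. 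The inequality then holds for all $n \ge 19$ by monotonicity: the left side increases in $n$ and the right side decreases.

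\textbf{Step 3 (finite check as a fallback).} Alternatively, if we only want the crude bound of Step 1, we will verify the remaining cases $19 \le n \le 25$ and $d\in\{3,4,5\}$ directly from the recurrence values of Proposition \ref{prop:path}. There are only $21$ explicit comparisons, for example $\mathcal{F}_{P_{19}}=114$, $\mathcal{F}_{P_{16}}=49$, and $49\psi^3 \approx 113.9$.

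The main obstacle is exactly these small cases. Near $n = 19$ the ratio $\psi^d\mathcal{F}_{P_{n-d}}/\mathcal{F}_{P_n}$ oscillates, because the complex roots contribute a term of relative size about $0.08/k_1\psi^{n-d}$. So the margin of $1\%$ must be confirmed either through the careful bound on $|\varepsilon_m|$ from Step 2 or through the table in Step 3. We would try the explicit table first, since it is fully rigorous with integer arithmetic and bounds such as $\psi^3 = \psi+1 < 2.3248$.
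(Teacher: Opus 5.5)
Your proof is correct and follows essentially the same route as the paper: write $\mathcal{F}_{P_m}=k_1\psi^m+\varepsilon_m$, bound the complex-root part by $|\varepsilon_m|\le 2|k_2||\omega_2|^m$, and check that for $n\ge 19$ this error is absorbed by the $1\%$ margin. Your Step 2 is a bit more careful than the paper's proof: it keeps the factor $\psi^d$ in the final sufficient condition, which the paper's stated check $4|k_2||\omega_2|^{n-d}\le \mathcal{F}_{P_n}/100$ drops (the corrected condition still holds for $n\ge 19$), and it makes the margin explicit ($\approx 36$ versus $\approx 114$), while Step 3 gives a purely integer-arithmetic alternative.
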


\begin{proof}
Recall that the solution to the recurrence relation for the number of minimal forts of $P_n$ is given by $\mathcal{F}_{P_{n}} = \frac{\psi^{n+4}}{2\psi+3} + \varepsilon_n$, where $\varepsilon_n = k_2 \omega_2^n + k_3 \omega_3^n$ represents the contribution of the complex roots. 

We expand $\mathcal{F}_{P_{n-d}}$ as follows:
\begin{eqnarray*}
\mathcal{F}_{P_{n-d}} &=& \frac{\psi^{n+4-d}}{2\psi+3} + \varepsilon_{n-d} \\
&=& \frac{\psi^{n+4}}{2\psi+3} \cdot \frac{1}{\psi^d} + \varepsilon_{n-d} + \varepsilon_{n} \cdot \frac{1}{\psi^d} - \varepsilon_{n} \cdot \frac{1}{\psi^d} \\
&=& \left( \frac{\psi^{n+4}}{2\psi+3} \cdot \frac{1}{\psi^d}  + \varepsilon_{n} \cdot \frac{1}{\psi^d} \right) + \varepsilon_{n-d}- \varepsilon_{n} \cdot \frac{1}{\psi^d} \\
&\le& \frac{\mathcal{F}_{P_{n}}}{\psi^d} + |\varepsilon_{n-d}| +  |\varepsilon_n| \\
&\le& \frac{\mathcal{F}_{P_{n}}}{\psi^d} + | k_2 \omega_2^{n-d}| + |k_3 \omega_3^{n-d}|  +  |k_2 \omega_2^n| + |k_3 \omega_3^n| \\
&\le& \frac{\mathcal{F}_{P_{n}}}{\psi^d} + 4|k_2| |\omega_2|^{n-d} \\
\end{eqnarray*}

Where the third to fourth line follows from the definition of $\varepsilon_{n}$. The next line applies the fact that $\psi^d > 1$ with the triangle inequality. The second to last inequality applies the triangle inequality and uses the definition of $\varepsilon$. The last inequality applies the facts that $|\omega_2| < 1$, $k_2 = \overline{k_3}$ and $\omega_2 = \overline{\omega_3}$, so $|k_2| = |k_3|$ and $|\omega_2| = |\omega_3|$. 

For a fixed $d \ge 3$, since $|\omega_2| < 1$, the term $4|k_2| |\omega_2|^{n-d}$ is a strictly decreasing function of $n$, while $\mathcal{F}_{P_{n}}$ is an increasing function of $n$. To establish the lemma, it suffices to verify the values of $n$ for which $4|k_2| |\omega_2|^{n-d} \le \frac{\mathcal{F}_{P_n}}{100}$.

Computational verification yields the following results:
\begin{itemize}
    \item For $d=3$, the quantity $4|k_2| |\omega_2|^{n-d} - \frac{\mathcal{F}_{P_n}}{100}$ is negative for $n \ge 16$.
    \item For $d=4$, the quantity $4|k_2| |\omega_2|^{n-d} - \frac{\mathcal{F}_{P_n}}{100}$ is negative for $n \ge 18$.
    \item For $d=5$, the quantity $4|k_2| |\omega_2|^{n-d} - \frac{\mathcal{F}_{P_n}}{100}$ is negative for $n \ge 19$.
\end{itemize}
Thus, for $n \ge 19$ and $d \in \{3, 4, 5\}$, the bound holds.
\end{proof}

\begin{lemma} \label{lem:ratio6}
\[\frac{d-1}{\psi^{d}} \le \frac{1}{e \psi \ln \psi} < \frac{100}{101} < 1.\]
\end{lemma}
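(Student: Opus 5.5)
The first inequality is a one-variable calculus bound, so I will treat $d$ as a real variable $t>1$ and maximize $g(t) = (t-1)\psi^{-t}$. Differentiating gives
\[ g'(t) = \psi^{-t}\bigl(1 - (t-1)\ln\psi\bigr), \]
so $g$ increases for $t < 1 + 1/\ln\psi$ and decreases afterwards. Its unique maximum is therefore at $t^* = 1 + 1/\ln\psi$. At this point $(t^*-1) = 1/\ln\psi$ and $\psi^{-t^*} = \psi^{-1}\psi^{-1/\ln\psi} = \psi^{-1}e^{-1}$. Hence
\[ \sup_{t>1} g(t) = \frac{1}{e\psi\ln\psi}, \]
and this bounds $(d-1)/\psi^d$ for every integer $d\ge 1$, in particular for all $d$ with $3\le d\le n-1$ as they occur in inequality (\ref{eqn:choices}).

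Alternatively, the bound follows without calculus from $e^{y}\ge ey$ with $y=(d-1)\ln\psi$. This gives $\psi^{d-1}\ge e(d-1)\ln\psi$, i.e. $(d-1)/\psi^{d} \le 1/(e\psi\ln\psi)$. I would probably present this version since it is shorter.

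The remaining inequalities are numerical. Using $\psi\approx 1.32472$, we have $\ln\psi\approx 0.28120$, so $e\psi\ln\psi \approx 2.71828\cdot 1.32472\cdot 0.28120\approx 1.0126$. Then $1/(e\psi\ln\psi)\approx 0.98755$, which is less than $100/101\approx 0.990099$, and that in turn is less than $1$.

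The only delicate point is certifying $e\psi\ln\psi > 1.01$ rigorously, since the margin is only about $0.0026$. I would do this with explicit interval bounds. Take $\psi > 1.3247$, justified because $1.3247^3 - 1.3247 - 1 < 0$ and $z^3-z-1$ is increasing for $z>1$. Then bound $\ln(1.3247) > 0.2811$ via a few terms of the series for $\ln\frac{1+u}{1-u}$, and use $e > 2.718$. The product of these lower bounds already exceeds $1.0126 > 1.01$, which completes the argument.
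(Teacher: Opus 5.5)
Your argument is correct and follows the paper's route: maximize $(t-1)\psi^{-t}$ at $t = 1 + 1/\ln\psi$ to obtain $1/(e\psi\ln\psi)$, then compare numerically with $100/101$; your $e^{y} \ge ey$ variant and explicit interval bounds are welcome additions that the paper omits. One small slip: $2.718 \cdot 1.3247 \cdot 0.2811 \approx 1.0121$, not more than $1.0126$, but it still exceeds $1.01$, which is all you need.
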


\begin{proof}
By continuous optimization, the function $f(d) = \frac{d-1}{\psi^d}$ attains its global maximum at exactly $d = 1 + \frac{1}{\ln \psi} \approx 4.56$. The maximum value is $f\left( 1 + \frac{1}{\ln \psi} \right) =  \frac{1}{e \psi \ln \psi} \approx 0.9876 < \frac{100}{101} \approx 0.9901$.
\end{proof}

\begin{lemma} \label{lem:ToC2n}
For $n \ge d \ge 3$
    \[ (d-1) \frac{{{n - d} \choose 2}}{\psi^d}  < \frac{100}{101} {{n-1} \choose 2}  \]
\end{lemma}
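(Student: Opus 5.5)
The inequality follows from two simple comparisons, chained together. The approach is to split the left-hand side as
\[ (d-1)\frac{\binom{n-d}{2}}{\psi^d} = \frac{d-1}{\psi^d}\cdot \binom{n-d}{2}. \]
Lemma \ref{lem:ratio6} then bounds the first factor and a monotonicity comparison bounds the second.

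First, Lemma \ref{lem:ratio6} gives $\frac{d-1}{\psi^d} \le \frac{1}{e\psi\ln\psi} < \frac{100}{101}$ for every real $d$. In particular this holds for every integer $d \ge 3$.

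Second, since $d \ge 3 > 1$, we have $n-d < n-1$. Because $\binom{m}{2} = m(m-1)/2$ is nondecreasing for integers $m \ge 0$, this yields
\[ 0 \le \binom{n-d}{2} \le \binom{n-1}{2}. \]
For $n \ge 3$ we also have $\binom{n-1}{2} \ge 1 > 0$. Multiplying the two bounds, both of which have nonnegative factors, gives
\[ \frac{d-1}{\psi^d}\binom{n-d}{2} \le \frac{100}{101}\binom{n-d}{2} \le \frac{100}{101}\binom{n-1}{2}. \]

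The main point of care is strictness. If $\binom{n-d}{2} > 0$, the strict inequality $\frac{d-1}{\psi^d} < \frac{100}{101}$ from Lemma \ref{lem:ratio6} makes the first step strict. If $\binom{n-d}{2} = 0$ (for example when $n-d \le 1$), the left side is $0$ while the right side is $\frac{100}{101}\binom{n-1}{2} > 0$, since $n \ge d \ge 3$. In either case the inequality is strict, which completes the argument.
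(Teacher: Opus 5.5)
Your proof is correct and follows the paper's argument: bound $\frac{d-1}{\psi^d}$ by Lemma~\ref{lem:ratio6}, then use $\binom{n-d}{2} \le \binom{n-1}{2}$. Your case split on whether $\binom{n-d}{2}=0$ also justifies the strict inequality, which the paper leaves implicit.
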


\begin{proof}
By applying Lemma \ref{lem:ratio6}, we have 
\[
(d-1) \frac{{{n - d} \choose 2}}{\psi^d} \le \frac{1}{e \psi \ln \psi} {{n - d} \choose 2} < \frac{100}{101} {{n-1} \choose 2}.
\]
\end{proof}

\begin{lemma} \label{lem:growth_53}
    For $n \ge 8$, the growth of $\mathcal{F}_{P_n}$ satisfies $\frac{5}{3} \left( \mathcal{F}_{P_n} -1 \right) \ge \binom{n-6}{2}$.
\end{lemma}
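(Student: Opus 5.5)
The plan is induction on $n$: verify a short initial window directly, then use the recurrence $\mathcal{F}_{P_n} = \mathcal{F}_{P_{n-2}} + \mathcal{F}_{P_{n-3}}$ (Proposition \ref{prop:path}) for the inductive step. The step hinges on how the right-hand side grows. Write $g(n) = \binom{n-6}{2}$ and set $h(n) = \frac{5}{3}(\mathcal{F}_{P_n} - 1)$. Then
\[ h(n) = \tfrac{5}{3}\bigl(\mathcal{F}_{P_{n-2}} - 1\bigr) + \tfrac{5}{3}\bigl(\mathcal{F}_{P_{n-3}} - 1\bigr) + \tfrac{5}{3} = h(n-2) + h(n-3) + \tfrac{5}{3}. \]
So if the claim holds at $n-2$ and $n-3$, we get $h(n) \ge \binom{n-8}{2} + \binom{n-9}{2} + \frac{5}{3}$. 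It therefore suffices to show
\[ \binom{n-8}{2} + \binom{n-9}{2} \ge \binom{n-6}{2} \]
for all $n$ beyond some threshold $N_0$.

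Put $m = n-6$. The needed inequality becomes $\binom{m-2}{2} + \binom{m-3}{2} \ge \binom{m}{2}$. After clearing denominators this is $(m-2)(m-3) + (m-3)(m-4) \ge m(m-1)$, which simplifies to $m^2 - 11m + 18 \ge 0$. That holds once $m \ge 9$, i.e. $n \ge 15$. The extra $+\frac{5}{3}$ gives a little slack: $m^2 - 11m + 18 + \frac{10}{3} \ge 0$ is satisfied already at $m = 9$, and in fact for somewhat smaller $m$ as well.

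The base cases therefore cover three consecutive values from $n = 8$ through roughly $n = 15$ to $17$, and I would check each one directly with the table of path values. The sequence $\mathcal{F}_{P_n}$ runs $1,1,1,2,2,3,4,5,7,9,12,16,21,28,37,49,65,86,\dots$ for $n = 1, 2, \dots$. At $n = 8$ the left side is $\frac{5}{3}(5-1) = 6.67$ against $\binom{2}{2} = 1$. At $n = 12$ it is $\frac{5}{3} \cdot 15 = 25$ against $\binom{6}{2} = 15$. At $n = 15$ it is $\frac{5}{3} \cdot 36 = 60$ against $\binom{9}{2} = 36$. At $n = 16$ it is $80$ against $45$, and at $n = 17$ it is $\frac{5}{3} \cdot 64 \approx 106.7$ against $55$. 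For $n < 8$ the binomial $\binom{n-6}{2}$ vanishes or is trivial, so nothing needs checking there.

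The induction is then run for $n \ge 18$, using the verified cases $8 \le n \le 17$ to supply the values at $n-2$ and $n-3$.

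The only point needing care is making sure the verified window reaches the threshold where the polynomial inequality takes over. Verifying every $n$ up to $17$ covers this comfortably.
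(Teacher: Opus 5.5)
Your proof is correct, and it takes a genuinely different route from the paper. In fact the paper gives no formal proof here: it offers only the heuristic that $\frac{5}{3}(\mathcal{F}_{P_n}-1)$ grows exponentially while $\binom{n-6}{2}$ grows quadratically, plus a plot of the difference.

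That heuristic does not locate the crossover, and a plot covers only finitely many $n$. Formalizing it along the paper's lines would mean using the closed-form bound $\mathcal{F}_{P_n} > k_1\psi^n - 1$, proving an analytic inequality such as $\frac{5}{3}(k_1\psi^n - 2) \ge \binom{n-6}{2}$ past an explicit threshold, and then checking the remaining cases numerically. That approach makes the exponential-versus-quadratic reason transparent, but it drags in irrational constants.

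Your induction avoids those constants entirely. The affine recurrence $h(n) = h(n-2) + h(n-3) + \frac{5}{3}$ reduces the inductive step to $m^2 - 11m + 18 \ge 0$ (roots $2$ and $9$). So elementary algebra plus a finite integer check suffices. Your table of $\mathcal{F}_{P_n}$ values and all the base comparisons for $8 \le n \le 17$ check out.

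One small inaccuracy, which does not affect validity: the slack $\frac{5}{3}$ does not push the threshold below $m = 9$ for integers. At $m = 8$ you get $64 - 88 + 18 + \frac{10}{3} = -\frac{8}{3} < 0$, so the step genuinely needs $n \ge 15$. Since the step works for every $n \ge 15$ and then only calls on $n-2, n-3 \ge 12$, verifying $8 \le n \le 14$ directly would already suffice. Your window up to $n = 17$ is just more than needed.
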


We omit a formal proof. $\frac{5}{3} \left( \mathcal{F}_{P_n} -1 \right)$ grows exponentially whereas $\binom{n-6}{2}$ grows quadratically. A plot of $\frac{5}{3} \left( \mathcal{F}_{P_n} -1 \right) - \binom{n-6}{2}$ is given in Figure \ref{fig:lem:growth_53}. \hfill $\square$

\begin{figure}[H]
\centering
    \includegraphics[width=0.5\textwidth]{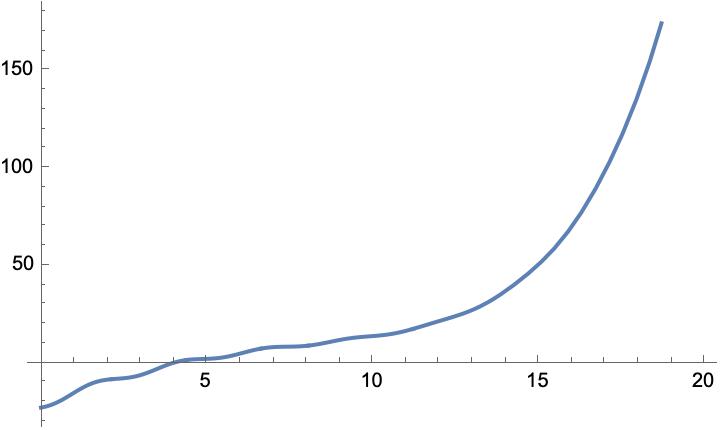}
    \caption{Plot of $\frac53 (\mathcal{F}_{P_n}-1) - \binom{n-6}{2}$ as referenced in Lemma \ref{lem:growth_53}.} \label{fig:lem:growth_53}
\end{figure}

\begin{lemma} \label{lem:dratio53}
For $d \ge 6$, the ratio $\frac{1}{d} \binom{d-1}{2} = \frac{(d-1)(d-2)}{2d}$ is non-decreasing and satisfies $\frac{1}{d} \binom{d-1}{2} \ge \frac{5}{3}$.
\end{lemma}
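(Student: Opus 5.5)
The statement concerns a single-variable function $g(d) = \frac{(d-1)(d-2)}{2d}$, so I will treat it directly as an elementary calculus and arithmetic fact. First I rewrite
\[ g(d) = \frac{d^2 - 3d + 2}{2d} = \frac{d}{2} - \frac{3}{2} + \frac{1}{d}. \]
Monotonicity then follows by comparing consecutive values. For integers $d \ge 1$,
\[ g(d+1) - g(d) = \frac{1}{2} + \frac{1}{d+1} - \frac{1}{d} = \frac{1}{2} - \frac{1}{d(d+1)}, \]
which is nonnegative as soon as $d(d+1) \ge 2$, i.e. for all $d \ge 1$. In particular $g$ is non-decreasing on $d \ge 6$. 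Alternatively, treating $d$ as a real variable, $g'(d) = \frac{1}{2} - \frac{1}{d^2} > 0$ for $d > \sqrt{2}$; either form suffices.

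For the lower bound, monotonicity reduces the claim to the endpoint $d = 6$. There $g(6) = \frac{5 \cdot 4}{12} = \frac{5}{3}$, so $g(d) \ge g(6) = \frac{5}{3}$ for every $d \ge 6$. This shows the constant $\frac{5}{3}$ is exactly tight at $d=6$, which matches its use alongside Lemma \ref{lem:growth_53}.

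There is no genuine obstacle. The only point needing care is confirming that $g(6)$ equals $\frac{5}{3}$ exactly rather than merely approximately, since the inequality is tight there.
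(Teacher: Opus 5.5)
Your proof is correct and follows essentially the same route as the paper, which also gets monotonicity from the derivative (only positive critical point $\sqrt{2}$, with $f'(6) = \frac{17}{36} > 0$) and then evaluates $f(6) = \frac{5}{3}$. Your rewriting $g(d) = \frac{d}{2} - \frac{3}{2} + \frac{1}{d}$ leads to $g'(d) = \frac{1}{2} - \frac{1}{d^2}$ and to the discrete difference $\frac{1}{2} - \frac{1}{d(d+1)} \ge 0$; this is a cleaner presentation of the same argument, not a different approach.
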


\begin{proof}
    The critical points of the function $f(d) = \frac{(d-1)(d-2)}{2d}$ are $d = -\sqrt{2},0, \sqrt{2}$ and the derivative, $f'(d) = -\frac{(d-1) (d-2)}{2 d^2} + \frac{d-2}{2 d} + \frac{d-1}{2 d}$ has $f'(6) = \frac{17}{36} > 0$. Hence, $f(d)$ is nondecreasing for $d\ge 6$. With $f(6) = \frac{5}{3}$, the result holds.
\end{proof}

\begin{lemma} \label{lem:19}
For $n \ge 8$ and $d \ge 6$, \[ \binom{d-1}{2} + d \binom{n-d}{2} \le \binom{d-1}{2} \mathcal{F}_{P_{n}}\].
\end{lemma}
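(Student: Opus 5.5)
The plan is to rearrange the inequality so that the two preceding lemmas apply directly. Subtracting $\binom{d-1}{2}$ from both sides, the claim is equivalent to
\[ d\binom{n-d}{2} \le \binom{d-1}{2}\left(\mathcal{F}_{P_n}-1\right). \]
Dividing by $d$, it suffices to show
\[ \binom{n-d}{2} \le \frac{1}{d}\binom{d-1}{2}\left(\mathcal{F}_{P_n}-1\right). \]

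\textbf{Lower-bounding the right-hand side.} For $n \ge 1$ we have $\mathcal{F}_{P_n} \ge 1$, so $\mathcal{F}_{P_n}-1 \ge 0$. Lemma~\ref{lem:dratio53} gives $\frac{1}{d}\binom{d-1}{2} \ge \frac{5}{3}$ for every $d \ge 6$. Hence the right-hand side is at least $\frac{5}{3}(\mathcal{F}_{P_n}-1)$.

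\textbf{Upper-bounding the left-hand side.} Since $d \ge 6$, we have $n-d \le n-6$. The function $k \mapsto \binom{k}{2}$ is non-decreasing on the nonnegative integers, and is $0$ for $k\in\{0,1\}$. If $n-d<0$, interpret $\binom{n-d}{2}=0$, or note that the lemma is only invoked with $d\le n-1$. In every case $\binom{n-d}{2} \le \binom{n-6}{2}$.

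\textbf{Combining.} Lemma~\ref{lem:growth_53} states that $\binom{n-6}{2} \le \frac{5}{3}(\mathcal{F}_{P_n}-1)$ for $n \ge 8$. Chaining this with the two bounds above gives
\[ \binom{n-d}{2} \le \binom{n-6}{2} \le \tfrac{5}{3}(\mathcal{F}_{P_n}-1) \le \tfrac{1}{d}\binom{d-1}{2}(\mathcal{F}_{P_n}-1), \]
which is exactly the rearranged inequality. Multiplying back by $d$ and adding $\binom{d-1}{2}$ to both sides recovers the statement.

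The only real obstacle is bookkeeping rather than mathematics. One must check that the direction of each inequality is preserved; this relies on $\mathcal{F}_{P_n}-1\ge 0$ when multiplying by the ratio. One must also handle the degenerate range $n-d<2$, where the left-hand binomial vanishes and the inequality is immediate. Beyond that, the argument rests entirely on Lemma~\ref{lem:growth_53}, which the paper verifies numerically and graphically rather than proving formally.
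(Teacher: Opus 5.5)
Your proof is correct and is essentially the paper's argument. It uses the same chain $\binom{n-d}{2}\le\binom{n-6}{2}\le\frac{5}{3}(\mathcal{F}_{P_n}-1)\le\frac{1}{d}\binom{d-1}{2}(\mathcal{F}_{P_n}-1)$ from Lemmas~\ref{lem:growth_53} and~\ref{lem:dratio53}, then multiplies by $d$ and adds $\binom{d-1}{2}$. Your remarks on $\mathcal{F}_{P_n}-1\ge 0$ and on the convention for $\binom{n-d}{2}$ when $d>n$ (the lemma is only applied with $d\le n-1$) are bookkeeping the paper leaves implicit.
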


\begin{proof}
Applying Lemma \ref{lem:growth_53} and Lemma \ref{lem:dratio53},

\[ \binom{n-d}{2} \le \binom{n-6}{2} \le \frac{5}{3} (\mathcal{F}_{P_{n}} - 1) \le \frac{1}{d} \binom{d-1}{2} (\mathcal{F}_{P_{n}} - 1). \]
Multiplying by $d$ and adding $\binom{d-1}{2}$ to both sides yields:
\begin{eqnarray*}
    \binom{d-1}{2} + d \binom{n-d}{2} &\le& \binom{d-1}{2} + \binom{d-1}{2} (\mathcal{F}_{P_{n}} - 1) \\ &=& \binom{d-1}{2} \mathcal{F}_{P_{n}}
\end{eqnarray*}
\end{proof}

\begin{lemma} \label{lem:dgreater6}
If $n \ge 8$ and $d \ge 6$, then 
\[ \binom{d-1}{2} + (d-1) \binom{n-d}{2} \mathcal{F}_{P_{n-d}} \le \binom{n}{2} \mathcal{F}_{P_{n}}. \]
\end{lemma}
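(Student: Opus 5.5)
The plan is to compare the left-hand side with $\binom{n}{2}\mathcal{F}_{P_n}$ through the exact identity
\[ \binom{n-1}{2} = \binom{d-1}{2} + \binom{n-d}{2} + (d-1)(n-d), \]
so that $\binom{n}{2}\mathcal{F}_{P_n} = \bigl[\binom{d-1}{2} + \binom{n-d}{2} + (d-1)(n-d) + (n-1)\bigr]\mathcal{F}_{P_n}$. The term $\binom{d-1}{2}$ on the left is trivially dominated by $\binom{d-1}{2}\mathcal{F}_{P_n}$, since $\mathcal{F}_{P_n}\ge 1$. It then suffices to bound the main term by
\[ (d-1)\binom{n-d}{2}\mathcal{F}_{P_{n-d}} \le \Bigl[\binom{n-d}{2} + (d-1)(n-d) + (n-1)\Bigr]\mathcal{F}_{P_n}. \]
(We may assume $d\le n-2$, since otherwise $\binom{n-d}{2}=0$ and there is nothing to prove.)

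For the main term I would use the closed form. From the proposition $k_1\psi^m - 1 < \mathcal{F}_{P_m} < k_1\psi^m + 1$ we get
\[ \mathcal{F}_{P_{n-d}} < \frac{k_1\psi^n}{\psi^d} + 1 < \frac{\mathcal{F}_{P_n}+1}{\psi^d} + 1. \]
Hence
\[ (d-1)\binom{n-d}{2}\mathcal{F}_{P_{n-d}} < c_d\binom{n-d}{2}\bigl(\mathcal{F}_{P_n}+1\bigr) + (d-1)\binom{n-d}{2}, \qquad c_d := \frac{d-1}{\psi^d}. \]
For $d \ge 6$ we are past the maximiser $1+1/\ln\psi \approx 4.56$ from Lemma \ref{lem:ratio6}, so $c_d \le c_6 = 5/\psi^6 \approx 0.926$. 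The piece $c_d\binom{n-d}{2}\mathcal{F}_{P_n}$ is therefore absorbed by $\binom{n-d}{2}\mathcal{F}_{P_n}$, leaving a slack of $(1-c_6)\binom{n-d}{2}\mathcal{F}_{P_n}$.

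What remains is to absorb the additive error $(c_d + d - 1)\binom{n-d}{2} \le d\binom{n-d}{2}$ into the leftover quantity
\[ (1-c_6)\binom{n-d}{2}\mathcal{F}_{P_n} + \bigl[(d-1)(n-d) + (n-1)\bigr]\mathcal{F}_{P_n}. \]
This is the step where the exponential growth of $\mathcal{F}_{P_n}$ has to beat the polynomial factor $d$. Lemma \ref{lem:19} gives $d\binom{n-d}{2} \le \binom{d-1}{2}(\mathcal{F}_{P_n}-1)$, which suffices when $d$ is small relative to $n$. For large $d$ (close to $n$) I would instead use the slack directly: $d \le n$ while $\mathcal{F}_{P_n}$ grows like $k_1\psi^n$, so $(1-c_6)\mathcal{F}_{P_n} \ge d$ once $n$ is moderately large, and for the intermediate range the $(d-1)(n-d)\mathcal{F}_{P_n}$ term covers the error when $n-d$ is small.

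I expect this bookkeeping to be the main obstacle: ensuring the two regimes cover all $d\in[6,n-2]$ uniformly for every $n \ge 8$. For the finitely many small $n$ (say $n\le 20$) where the asymptotic estimates are too weak, I would simply verify the inequality numerically over all admissible $d$, in the same spirit as Lemma \ref{lem:growth_53}.
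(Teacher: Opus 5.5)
Your strategy is the paper's: bound $\mathcal{F}_{P_{n-d}}$ by $k_1\psi^{n-d}+1$, use Lemma~\ref{lem:ratio6} to get $(d-1)/\psi^d\le 1$, convert $k_1\psi^n$ back to at most $\mathcal{F}_{P_n}+1$, and absorb the leftover polynomial error, which in both versions is at most $d\binom{n-d}{2}$. Your sharper constant $c_d\le c_6\approx 0.926$ is never needed; $c_d\le 1$ suffices. The real defect is that you leave this last absorption unfinished, planning a regime split plus numerical checks. You also misjudge the two tools you name for it: each one already works for every $d\ge 6$ and $n\ge 8$.

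First, Lemma~\ref{lem:19} is not restricted to $d$ small relative to $n$. Its proof only uses $\binom{n-d}{2}\le\binom{n-6}{2}$, which holds for all $6\le d\le n$. It looked insufficient only because you spent all of $\binom{d-1}{2}\mathcal{F}_{P_n}$ on the lone additive $\binom{d-1}{2}$. If you keep the slack $\binom{d-1}{2}(\mathcal{F}_{P_n}-1)$, the lemma gives $\binom{d-1}{2}+d\binom{n-d}{2}\le\binom{d-1}{2}\mathcal{F}_{P_n}$, and $\binom{d-1}{2}+\binom{n-d}{2}\le\binom{n}{2}$ finishes. That is exactly the paper's chain.

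Second, within your own reduction the cross term alone handles every $d$, not just the case where $n-d$ is small. The inequality $d\binom{n-d}{2}\le(d-1)(n-d)\mathcal{F}_{P_n}$ reduces to $\mathcal{F}_{P_n}\ge\frac{d}{d-1}\cdot\frac{n-d-1}{2}$. For $d\ge 6$ the right side is at most $\frac35(n-7)$. Meanwhile $\mathcal{F}_{P_n}\ge n-3$ for $n\ge 8$ by a one-line induction on the recurrence: check $n=8,9,10$, then use $(n-5)+(n-6)\ge n-3$.

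This second way of closing is slightly better than the paper's. The paper discards the $(d-1)(n-d)+(n-1)$ part of $\binom{n}{2}$, so it must rely on Lemma~\ref{lem:growth_53}, whose proof it omits. Your decomposition needs only the trivial linear bound above.
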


\begin{proof}

\renewcommand{\arraystretch}{2.5}
\[ \begin{array}{>{\displaystyle}l >{\displaystyle}l >{\displaystyle}l >{\displaystyle}r}
& & \binom{d-1}{2} + ~(d-1)~ \binom{n-d}{2}~ \mathcal{F}_{P_{n-d}} & \\
& \le & \binom{d-1}{2} + ~(d-1)~ \binom{n-d}{2}~ \left( K \frac{\psi^{n}}{\psi^d} + 1 \right) & \text{\hspace{3cm} ($\mathcal{F}_{P_{n-d}} \le K \psi^{n-d} + 1$)} \\
& = & \binom{d-1}{2} + ~\binom{n-d}{2}~ K \psi^{n} \frac{d-1}{\psi^d} + ~(d-1)~ \binom{n-d}{2} & \\
& \le & \binom{d-1}{2} + ~\binom{n-d}{2}~ K \psi^{n} + ~(d-1)~ \binom{n-d}{2} & \text{\hspace{3cm} (Lemma \ref{lem:ratio6})} \\
& = & \binom{d-1}{2} + ~\binom{n-d}{2}~ K \psi^{n} - ~\binom{n-d}{2} + ~d~ \binom{n-d}{2} & \\
& = & \binom{d-1}{2} + ~\binom{n-d}{2}~ \left( K \psi^{n} - 1 \right) + ~d~ \binom{n-d}{2} & \\
& \le & \binom{d-1}{2} + ~\binom{n-d}{2}~ \mathcal{F}_{P_{n}} + ~d~ \binom{n-d}{2} & \text{\hspace{3cm} ($K \psi^{n} -1 \le \mathcal{F}_{P_{n}}$)} \\
& \le & \binom{d-1}{2}~ \mathcal{F}_{P_{n}} + ~\binom{n-d}{2}~ \mathcal{F}_{P_{n}} & \text{\hspace{3cm} (Lemma \ref{lem:19})} \\
& \le & \binom{n}{2}~ \mathcal{F}_{P_{n}}. & \\
\end{array} \]

\end{proof}

\subsection{Induction Step ($n \ge 20$)} \label{sec:mainresult}
For fixed $n \ge 20$, we apply strong induction. We assume the induction hypothesis:

\[ \text{{\bf Induction Hypothesis:} ~~~~~~~~~~~~~~ For any $k < n$ (except $k = 2$), } \mathcal{F}_{R_{k}} \le \binom{k}{2} \mathcal{F}_{P_{k}}. \]

By Lemma \ref{lem:recurr}, any forest $R$ on $n$ vertices satisfies at least one of two recursion inequalities given in the following two cases:

\paragraph{Case 1: Path-like Recursion.} 
Suppose $\mathcal{F}_{R_{n}} \le \mathcal{F}_{R_{n-2}} + \mathcal{F}_{R_{n-3}}$ as in inequality \ref{eqn:path}, then by the induction hypothesis:
\begin{equation*}
\mathcal{F}_{R_{n}} \le \binom{n-2}{2}\mathcal{F}_{P_{n-2}} + \binom{n-3}{2}\mathcal{F}_{P_{n-3}}.
\end{equation*}
Since $\binom{n-3}{2} \le \binom{n-2}{2} \le \binom{n}{2}$,
\begin{equation*}
\mathcal{F}_{R_{n}} \le \binom{n-2}{2}(\mathcal{F}_{P_{n-2}} + \mathcal{F}_{P_{n-3}}) = \binom{n-2}{2}\mathcal{F}_{P_{n}} \le \binom{n}{2}\mathcal{F}_{P_{n}}.
\end{equation*}

\paragraph{Case 2: Large Degree Case.}
Suppose instead there exists $d$ such that $3 \le d \le n-1$ ($d \ne n-2$) where $\mathcal{F}_{R_{n}} \le \binom{d-1}{2} + (d-1)\mathcal{F}_{R_{n-d}}$ as in inequality \ref{eqn:choices}. We distinguish between three subcases: $3 \le d \le 5$, $n-2 \ne d \ge 6$ and $d=n-2$.
~\\~\\
Subcase $3 \le d \le 5$:

\renewcommand{\arraystretch}{2.5}
\[ \begin{array}{>{\displaystyle}l >{\displaystyle}l >{\displaystyle}l >{\displaystyle}r}  
\mathcal{F}_{R_{n}} &\le& \binom{d-1}{2} + ~(d-1)~ \mathcal{F}_{R_{n-d}}  & \text{\hspace{3cm} (Given Inequality)} \\
&\le& \binom{d-1}{2} + ~(d-1)~ \binom{n-d}{2} ~\mathcal{F}_{P_{n-d}}  & \text{\hspace{3cm} (Induction Hypothesis)} \\
&\le& \binom{d-1}{2} + ~\frac{101}{100}~ (d-1)~ \binom{n-d}{2} ~\frac{\mathcal{F}_{P_{n}}}{\psi^d} & \text{\hspace{3cm} (Lemma \ref{lem:PndPnPsi}) } \\
&\le& \binom{d-1}{2} + ~\binom{n-d}{2} ~\mathcal{F}_{P_{n}} & \text{\hspace{3cm} (Lemma \ref{lem:ratio6}) } \\
&\le& \binom{d-1}{2} ~\mathcal{F}_{P_{n}} + ~\binom{n-d}{2} ~\mathcal{F}_{P_{n}} & \\
&\le& \binom{(d-1) + (n-d)}{2} ~\mathcal{F}_{P_{n}}&  \\
&=& \binom{n-1}{2} ~\mathcal{F}_{P_{n}} &\\
&\le& \binom{n}{2} ~\mathcal{F}_{P_{n}} &\\
\end{array} \]

Note that Lemma \ref{lem:PndPnPsi} only applies for $d= 3, 4, 5$, so a different approach must apply for $d \ge 6$.

Subcase $d \ge 6$ and $d \ne n-2$:


\renewcommand{\arraystretch}{2.5}
\[ \begin{array}{>{\displaystyle}l >{\displaystyle}l >{\displaystyle}l >{\displaystyle}r}
\mathcal{F}_{R_{n}} & \le & \binom{d-1}{2} + ~(d-1)~ \mathcal{F}_{R_{n-d}} & \text{\hspace{3cm} (Given Inequality)} \\
& \le & \binom{d-1}{2} + ~(d-1)~ \binom{n-d}{2} ~\mathcal{F}_{P_{n-d}} & \text{\hspace{3cm} (Induction Hypothesis)} \\
& \le & \binom{n}{2}~\mathcal{F}_{P_{n}} & \text{\hspace{3cm} (Lemma \ref{lem:dgreater6})} \\
\end{array} \]

If $d = n-2$, then the induction hypothesis does not apply. Even so, we have the following:

\[ \begin{array}{>{\displaystyle}l >{\displaystyle}l >{\displaystyle}l >{\displaystyle}r}
\mathcal{F}_{R_{n}} & \le & \binom{n-3}{2} + ~(n-3)~ \mathcal{F}_{R_{2}} & \text{\hspace{4cm} (Given Inequality)} \\
& = & \binom{n-3}{2} + ~(n-3)~ 2 & \text{\hspace{4cm} (Calculation for $\mathcal{F}_{R_2} = 2$)} \\
& \le & \binom{n-1}{2}  & \text{\hspace{4cm} (Convexity)} \\
& \le & \binom{n-1}{2}~\mathcal{F}_{P_{n}} 
\end{array} \]

This concludes the proof of Theorem \ref{thm:main}. \hfill 

\section{Conclusion and Open Questions} \label{sec:conclude}
\subsection{About the Main Theorem}

By using the principle of mathematical induction, we proved that $\mathcal{F}_{R_n}$ (and hence $\mathcal{F}_{T_n}$) cannot be larger than $\binom{n}{2} \mathcal{F}_{P_n} \sim k n^2 \psi^n$ for some constant $k$. We achieved this by establishing a large computer-assisted base case for all trees on $n \le 19$ vertices. This was necessary because many of the combinatorial inequalities only apply for larger $n$; namely, Lemma \ref{lem:PndPnPsi} does not apply until $n \ge 19$.

\subsection{Toward General Graphs}

In addition to their conjecture on trees, the authors of \cite{becker2025number} also ask about the growth rate for the number of minimal forts for general graphs. For trees, we have proven that the maximum growth rate is $\mathcal{F}_{T_n} = \psi^{n+o(n)} \approx  1.3247^{n+o(n)}$. The authors of \cite{becker2025number} give an example of a graph family that achieves $k \cdot (3^{1/3})^n \approx k\cdot 1.4422^n$. This leads to the question:

\begin{question}[From Conjecture 31 in \cite{becker2025number}]
Does there exist a family of graphs $\{G_n\}$ such that
\[
 \lim_{n \to \infty} \frac{\mathcal{F}_{G_{n+1}}}{\mathcal{F}_{G_n}} > 3^{1/3} ~?
\]

Or is $3^{1/3}$ best possible?

\end{question}

They frame the question as a conjecture in the affirmative.
During our study, we attempted to find such an example and could not. We believe that the example they give that achieves $3^{1/3}$ (multiple $K_4$ overlapping at a single vertex) may, in fact, be best possible. 

The techniques we use to bound $\mathcal{F}_{T_n}$ are often very specific for trees. Hence, establishing a bound for graphs in general will likely require a different approach.

For example, in Corollary \ref{lem:three_uncolored_in_a_row}, we show that any minimal fort for a tree cannot have three consecutive adjacent vertices. However, this is not true even in the case of unicyclic graphs. Such an example is given in Figure \ref{fig:unicyclicthreeinarow}. Also, general graphs may have minimal forts where non-fort vertices have three or more fort neighbors, in contrast to Corollary \ref{cor:treenot3neighbors}.

Hence, proving an optimal bound for the general graph case will require a far more sophisticated approach. 

\begin{figure}[ht]
    \centering
    
    \begin{subfigure}[b]{0.45\textwidth}
        \centering
        \begin{tikzpicture}[scale=0.5, transform shape,
            stdnode/.style={circle, draw, minimum size=4mm, inner sep=0pt, thick},
            fillednode/.style={stdnode, fill=black},
            every edge/.style={draw, thick}
        ]
            \def\cycleradius{3cm}
            \def\leafradius{4.5cm}

            \foreach \i/\nodestyle in {
                1/stdnode, 2/fillednode, 3/stdnode, 4/fillednode, 5/stdnode,
                6/stdnode, 7/fillednode, 8/stdnode, 9/fillednode, 10/stdnode}
            {
                \node[\nodestyle] (c\i) at ({180 - (\i-1)*36}:\cycleradius) {};
            }
            \foreach \i [count=\nexti from 2] in {1,...,9} { \draw (c\i) -- (c\nexti); }
            \draw (c10) -- (c1);

            \node[stdnode] (l1) at (180:\leafradius) {};
            \draw (c1) -- (l1);
            \node[stdnode] (l2) at (0:\leafradius) {};
            \draw (c6) -- (l2);
        \end{tikzpicture}
    \end{subfigure}
    \hfill 
    \begin{subfigure}[b]{0.45\textwidth}
        \centering
        \begin{tikzpicture}[scale=0.5, transform shape,
            stdnode/.style={circle, draw, minimum size=4mm, inner sep=0pt, thick},
            fillednode/.style={stdnode, fill=black},
            every edge/.style={draw, thick}
        ]
            \def\cycleradius{3cm}

            \foreach \i/\nodestyle in {
                1/stdnode, 2/fillednode, 3/stdnode, 4/stdnode,
                5/fillednode, 6/stdnode, 7/stdnode, 8/fillednode}
            {
                \node[\nodestyle] (v\i) at ({90 - (\i-1)*45}:\cycleradius) {};
            }
            \foreach \i [count=\nexti from 2] in {1,...,7} { \draw (v\i) -- (v\nexti); }
            \draw (v8) -- (v1);

            \draw (v1) -- (v5);
        \end{tikzpicture}
    \end{subfigure}

    \caption{Examples of unicyclic graphs with forts (as uncolored vertices) that do not obey results for trees. (Left) An example of a graph with a fort with multiple instances of three consecutive adjacent vertices. This demonstrates that Corollary \ref{lem:three_uncolored_in_a_row} does not apply to general graphs. (Right) An example of a graph with a fort where one vertex is adjacent to three fort vertices. This demonstrates that Corollary  \ref{cor:treenot3neighbors} does not apply to general graphs.}
    
    \label{fig:unicyclicthreeinarow}
\end{figure}

\subsection{Mean Number of Minimal Forts}

In Section \ref{sec:compresults}, our computations briefly investigated the question of the {\it mean} number of minimal forts on a tree. 

\begin{question}
Let $\mathbb{E} \mathcal{F}_{T_n}$ denote the mean number of minimal forts over all nonisomorphic trees on $n$ vertices. What is the growth rate of $\mathbb{E} \mathcal{F}_{T_n}$? Must it be that $\mathbb{E} \mathcal{F}_{T_n} \ll \mathcal{F}_{T_n}$?
\end{question}

A naive exponential fit to the data for $\mathbb{E} \mathcal{F}_{T_n}$ in Table \ref{tab:comp} is $\mathbb{E} \mathcal{F}_{T_n} \approx 3.02188 \times 1.13438^n$, suggesting that $\mathbb{E} \mathcal{F}_{T_n}$ grows at a much smaller rate than $\mathcal{F}_{T_n} = k_1 \psi^n \approx k_1 \times 1.32472^n$.

\subsection{Identifying the True Extremal Tree for Larger $n$}

For $4 \le n \le 18$, our algorithm definitively concludes that the star graph, $S_n$, is the tree with the maximum number of minimal forts. At $n=19$, it shows a shift to the graph $T(19,4,4,2)$. However, we are not able to make the same determination for much larger $n$. The analysis shows that the path graph cannot be the extreme example until at least $n=73$. Even beyond that, it is not intuitive that the path graph must ``win out.''

\begin{question}
What tree(s) achieve $\mathcal{F}_{T_n}$ for $n \ge 20$? For $19 \le n \le 73$, is it $T(n,m,k,p)$? For sufficiently large $n$, does $\mathcal{F}_{T_n} = \mathcal{F}_{P_n}$?
\end{question}

\bibliographystyle{acm}
\bibliography{minimalforts}

\begin{thebibliography}{10}

\bibitem{aazami2008hardness}
{\sc Aazami, A.}
\newblock {\em Hardness results and approximation algorithms for some problems
  on graphs}.
\newblock PhD thesis, University of Waterloo Waterloo, Ontario, Canada, 2008.

\bibitem{abbas2022leader}
{\sc Abbas, W., Shabbir, M., Yaz{\i}c{\i}o{\u{g}}lu, Y., and Koutsoukos, X.}
\newblock Leader selection for strong structural controllability in networks
  using zero forcing sets.
\newblock In {\em 2022 American Control Conference (ACC)\/} (2022), IEEE,
  pp.~1444--1449.

\bibitem{AIM}
{\sc {AIM Minimum Rank--Special Graphs Work Group}}.
\newblock Zero forcing sets and the minimum rank of graphs.
\newblock {\em Linear algebra and its applications 428}, 7 (2008), 1628--1648.

\bibitem{becker2025number}
{\sc Becker, P., Cameron, T.~R., Hanely, D., Ong, B., and Previte, J.~P.}
\newblock On the number of minimal forts of a graph.
\newblock {\em Graphs and Combinatorics 41}, 1 (2025), 25.

\bibitem{brimkov2021improved}
{\sc Brimkov, B., Mikesell, D., and Hicks, I.~V.}
\newblock Improved computational approaches and heuristics for zero forcing.
\newblock {\em INFORMS Journal on Computing 33}, 4 (2021), 1384--1399.

\bibitem{brueni2005pmu}
{\sc Brueni, D.~J., and Heath, L.~S.}
\newblock The {{P}{M}{U}} placement problem.
\newblock {\em SIAM Journal on Discrete Mathematics 19}, 3 (2005), 744--761.

\bibitem{burgarth2013zero}
{\sc Burgarth, D., D'Alessandro, D., Hogben, L., Severini, S., and Young, M.}
\newblock Zero forcing, linear and quantum controllability for systems evolving
  on networks.
\newblock {\em IEEE Transactions on Automatic Control 58}, 9 (2013),
  2349--2354.

\bibitem{cameron2023forts}
{\sc Cameron, T.~R., Hogben, L., Kenter, F.~H., Mojallal, S.~A., and Schuerger,
  H.}
\newblock Forts,(fractional) zero forcing, and cartesian products of graphs.
\newblock {\em arXiv preprint arXiv:2310.17904\/} (2023).

\bibitem{cameron2025minimal}
{\sc Cameron, T.~R., and Li, K.}
\newblock On the minimal forts of trees.
\newblock {\em arXiv preprint arXiv:2512.12874\/} (2025).

\bibitem{fast2018effects}
{\sc Fast, C.~C., and Hicks, I.~V.}
\newblock Effects of vertex degrees on the zero-forcing number and propagation
  time of a graph.
\newblock {\em Discrete Applied Mathematics 250\/} (2018), 215--226.

\bibitem{hicks2024many}
{\sc Hicks, I.~V., and Brimkov, B.}
\newblock The many face(t)s of zero forcing.
\newblock {\em Notices of the American Mathematical Society 71}, 2 (2024),
  167--173.

\bibitem{liao2015hybrid}
{\sc Liao, C.-S., Hsieh, T.-J., Guo, X.-C., Liu, J.-H., and Chu, C.-C.}
\newblock Hybrid search for the optimal {{P}{M}{U}} placement problem on a
  power grid.
\newblock {\em European Journal of Operational Research 243}, 3 (2015),
  985--994.

\bibitem{McKayTrees}
{\sc McKay, B.}
\newblock {Combinatorial Data: Trees}.
\newblock \url{https://users.cecs.anu.edu.au/~bdm/data/}, 2025.
\newblock Accessed: 14 November 2025.

\bibitem{monshizadeh2014zero}
{\sc Monshizadeh, N., Zhang, S., and Camlibel, M.~K.}
\newblock Zero forcing sets and controllability of dynamical systems defined on
  graphs.
\newblock {\em IEEE Transactions on Automatic Control 59}, 9 (2014),
  2562--2567.

\bibitem{datkony_minimalforts}
{\sc {{N. H.} {{\DJ}ạt}}}.
\newblock Github: Minimal{F}orts.
\newblock \url{https://github.com/datkony/MinimalForts}, 2026.

\bibitem{trefois2015zero}
{\sc Trefois, M., and Delvenne, J.-C.}
\newblock Zero forcing number, constrained matchings and strong structural
  controllability.
\newblock {\em Linear Algebra and its Applications 484\/} (2015), 199--218.

\end{thebibliography}

\end{document}